\newcommand{\R}{\mathbf{R}}
\newcommand{\Z}{\mathbf{Z}}
\newcommand{\N}{\mathbf{N}}
\newcommand{\F}{\mathcal{F}}
\newcommand{\eps}{\varepsilon}
\newtheorem{theorem}{Theorem}[section]
\newtheorem{lemma}[theorem]{Lemma}
\newtheorem{proposition}[theorem]{Proposition}
\newtheorem{corollary}[theorem]{Corollary}
\theoremstyle{definition}
\newtheorem{definition}[theorem]{Definition}
\newtheorem{example}[theorem]{Example}
 \theoremstyle{remark}
\newtheorem{remark}[theorem]{Remark}
\numberwithin{equation}{section}
\begin{document}

\title[Singular Riemannian Foliations]{Singular Riemannian Foliations:\\ Exceptional leaves; Tautness}

\author{Eva Nowak}
\address{Eva Nowak\\
Mathematisches Institut\\
Universit\"at zu K\"oln\\
Weyertal 86-90\\
50931 K\"oln\\
Germany}
\email{enowak@mi.uni-koeln.de}

\subjclass[2000]{Primary 53C20
; Secondary 53C12
, 53C40
}
\keywords{Singular Riemannian foliations, exceptional leaves, Morse theory, tautness}

\begin{abstract}
For a singular Riemannian foliation whose leaves are properly embedded, we show in the first part of this article the existence of global tubular neighbourhoods, and we develop a global description of the foliation as stratification by types of leaves.

The second part deals with the further restriction to a foliation without horizontal conjugate points, introduced by Lytchak and Thorbergsson, which in the special case of an isometric group action equals the concept of variationally completeness. Therefrom, we deduce a global geometric description -- the focal points of the leaves are exactly the singular points -- as well as a topological one: tautness of the leaves.
\end{abstract}

\maketitle
\section{Introduction}

It is a classical result of Bott and Samelson \cite{bottsamelson} that the orbits of a variationally complete group action are taut\footnote{In the context of foliations there exists a completely different meaning of this notion, see \cite{conlon}, but we use \emph{tautness} only in the Morse theoretical meaning as introduced in \cite{grove} as well as \cite{terngthorb2}.}. Isometric group actions are generalized to foliations via the notion of singular Riemannian foliation, see \cite{molino}; and Lytchak and Thorbergsson defined in \cite{lth} the analog of variational completeness for such foliations via the notion of foliation without horizontally conjugate points. The main aim of this paper is to show that tautness also holds in this more general situation. A basic approach is to generalize the explicit construction of linking cycles done by Bott and Samelson. Pursuing this strategy, we need and find global structural characteristics of the foliations concerned. While singular Riemannian foliations are well understood locally, their global structure can be arbitrarily complicated at least in the non compact case.

After a short introduction, we restrict our interest to properly embedded leaves from Chapter \ref{s:ex} forward. At first, we show the existence of global tubular neighborhoods, see Theorem \ref{theotube}, which is the basic tool to reveal global structural results in the following.

Then, we have a look at exceptional leaves, i.e. leaves of highest dimension with non-trivial holonomy. This kind of leaves does not occur too often as the set of specificly regular leaves is open, connected and dense in the manifold (Propositions \ref{propregdense} and \ref{propregcon}). We define the type of a leaf and show that the stratum of exceptional leaves as well as the manifold itself are stratified by the types of leaves.

In the third chapter, we analyse foliations without horizontally conjugate points (with properly embedded leaves). Locally constructed variations of horizontal geodesics can be extended globally, which leads to the geometric characterization formulated in Theorem \ref{theowhcp}: A singular Riemannian foliation has no horizontally conjugate points if and only if the singular points are exactly the focal points of the leaves. For this theorem, we need the extension of homothetical transformations, defined in \cite{molino} within tubular neighborhoods, to the negatives which is done in Proposition \ref{prophomotheties}. Even more, we show that the homothetical transformations are diffeomorphisms that respect not only the leaves but also the types of leaves. There are two other papers that among other things show the existence of negative homotheties, namely the one of Alexandrino and T\"oben, \cite{at}, and the one of Lytchak and Thorbergsson, \cite{lth07}. As these proofs are contemporaneous but independent and use completely different methods, we include our proof here.

With these structural results we finally can address ourselves to Morse theory of the leaves, i.e. the tautness problem. As a first step, we show that the leaves of a singular Riemannian foliation without horizontally conjugate points are $0$-taut. Therefrom, we conclude that there are no exceptional leaves if the foliated manifold is simply connected. At last, we show Theorem \ref{theotaut}: The leaves of a singular Riemannian foliation without horizontally conjugate points are taut.

\emph{Acknowledgements.} This paper essentially is my thesis which I wrote under the supervision of G. Thorbergsson at the University of Cologne. He deserves my gratitude for the constantly encouraging support during the last years. Furthermore, I want to thank A. Lytchak and S. Wiesendorf for essential hints towards Proposition \ref{propiso} and Theorem \ref{theotaut} in this generality.

\section{Preliminaries}

\subsection{Definitions and notation}

Let $(M, g)$ be a Riemannian manifold of dimension $n$. A decomposition $\F$ of $M$ into smooth, connected, injectively immersed submanifolds, the \emph{leaves,} is called a \emph{singular Riemannian foliation} if the following two conditions hold:
\begin{trivlist}
\item{(I)} The modul $\Xi_{\F}$ of smooth vector fields on $M$ that are tangential to the leaves is \emph{transitive,} i.e. for a leaf $N_p$ through $p$, and for every vector $v \in T_pN_p$ there is a vector field $X \in \Xi_{\F}$ such that $X(p) = v.$ Thus, the foliation is \emph{singular} in the sense of \cite{stefan} and \cite{sussmann}.
\item{(II)} The metric $g$ on $M$ is \emph{adapted} to $\F$: Each geodesic in $M$ that hits one leaf perpendicularly remains perpendiculary to all leaves it passes through. Such a geodesic is called \emph{horizontal.}
\end{trivlist}

Let $k$ denote the maximal dimension of the leaves. A leaf is \emph{regular} if its dimension is equal to $k,$ otherwise it is \emph{singular}. Equally, a point $p$ in $M$ is \emph{regular} (resp.~\emph{singular}) if the leaf $N_p$ is regular (resp.~singular).  The manifold $M$ is stratified by the dimension of the leaves. The \emph{regular stratum} $\Sigma_{\rm reg}$ is the union of all leaves of dimension $k$. The union of all leaves of lower dimension, the singular stratum, is denoted by $\Sigma_{\rm sing}.$  Each stratum $\Sigma_q$ of leaves of dimension $q$ is an embedded submanifold, $\bigcup_{q' \le q} \Sigma_{q'}$ is closed, and the regular stratum is open, connected, and dense in $M.$

For the leaf $N_p \subset M$ through $p,$ let $\nu N_p$ denote the normal bundle of $N_p$ in $TM|_{N_p}.$ If  $P \subset N_p$ is a relatively compact, open, and connected subset, $B_{\eps}(P)$ denotes the bundle of open normal balls with radius $\eps$ around $P$ in $N_p \subset \nu N_p.$ For $\eps$ sufficiently small, the normal exponential map is a diffeomorphism of $B_{\eps}(P)$ onto its image in $M,$ which is called a \emph{(distinguished) tubular neighborhood of $P$ (resp.~around $p$) with radius $\eps:$}
$$\exp^\perp: \nu N_p \to M; \exp^\perp(B_{\eps}(P))=: {\rm Tub}_\eps(P).$$
A \emph{tube} of radius $\delta < \eps$ around $P$ is the union of all points in ${\rm Tub}_\eps(P)$ whose distance to $P$ is equal to $\delta:$
$$S_\delta^P := \{x \in {\rm Tub}_\eps(P) \mid d(x, P) = \delta\}.$$
Let $\pi: {\rm Tub}_\eps(P) \to P$ denote the projection along horizontal geodesics. We call $$S_\eps(p) := \pi^{-1}(p)$$ a \emph{slice.}  If $y \in S_\eps(p),$ and $\eps$ sufficiently small, the leaf $N_y$ cuts the slice transversally. 

\noindent\begin{minipage}{0.6\textwidth}
\begin{center}
\begin{overpic}[width=7cm,
]{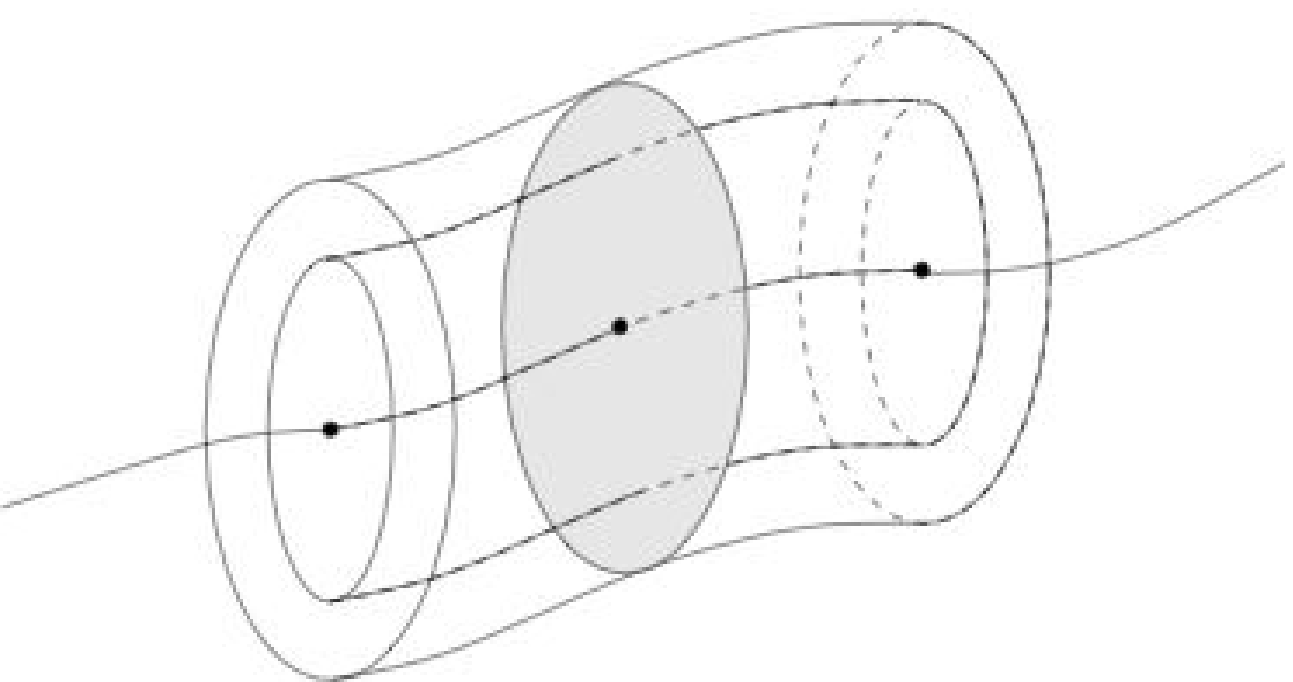}
\put(48,25){$p$}\put(95,35){$N_p$}\put(45,35){$S_\eps(p)$}\put(17,42){${\rm Tub}_\eps(P)$}\put(36,19){$P$}\put(62,15){$S_\delta^P$}
\end{overpic} 
\end{center}
\end{minipage}
\begin{minipage}{0.4\textwidth}
By the following Lemma \ref{lemlocaltube}, the connected component of \mbox{$N_y \cap {\rm Tub}_\eps(P)$} that contains $y$ lies in the tube $S_{d(y, p)}^P.$  Such a component $P_y \subset N_y$ is called a \emph{plaque}. For more details we refer to \cite{molino}, Chapter 6, wherefrom we just cite the following facts:
\end{minipage}

\begin{lemma}\label{lemlocaltube}
\begin{trivlist}
\item{\rm (i)} The projection $\pi: P_y \to P$ is a surjective submersion, and the fiber $\pi^{-1}(x)$ meets all the plaques in the tubular neighborhood for all $x \in P$.
\item{\rm (ii)} The distance between neighboring leaves is locally constant for the topology of leaves: $d(\tilde y, P) = d(y, P)$ for all $\tilde y$ in $P_y.$
\end{trivlist}
\end{lemma}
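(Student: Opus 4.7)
The plan is to prove (ii) first, since it localizes plaques to level sets of $d(\cdot,P)$ and thereby prepares (i). Both statements are standard facts drawn from Chapter~6 of \cite{molino}; my route is the usual one: (ii) follows directly from the adapted-metric axiom (II), and (i) combines the transversality of leaves to slices (already noted just above the lemma) with a path-lifting argument in $P$ that relies on the transitivity (I).

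For (ii), I would use the smoothness of $f:=d(\cdot,P)$ on ${\rm Tub}_\eps(P)\setminus P$, guaranteed by the choice of $\eps$ making $\exp^\perp$ a diffeomorphism on $B_\eps(P)$. At any $y$, $\nabla f(y)$ is the unit tangent at $y$ of the unique minimizing geodesic from $\pi(y)$ to $y$. This geodesic meets $N_p$ perpendicularly at $\pi(y)$, so by (II) it remains perpendicular to every leaf it passes through; in particular $\nabla f(y)\perp T_yN_y$. The restriction of $f$ to $N_y\cap{\rm Tub}_\eps(P)$ therefore has vanishing differential, and since $P_y$ is connected by definition, $f\equiv d(y,P)$ on $P_y$.

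The submersion part of (i) is then immediate: for every $y'\in P_y$ the transversality of $N_y=N_{y'}$ with the slice $S_\eps(\pi(y'))$ reads $T_{y'}N_y+\ker d\pi_{y'}=T_{y'}M$, so $d\pi_{y'}$ restricted to $T_{y'}P_y=T_{y'}N_y$ surjects onto $T_{\pi(y')}P$. For the surjectivity of $\pi|_{P_y}$, I would lift paths. Given $x\in P$, pick a smooth path $\gamma:[0,1]\to P$ from $\pi(y)$ to $x$ and cover it by finitely many small open sets $V_i\subset P$. On each $V_i$, using the transitivity (I), one constructs a vector field $X_i\in\Xi_{\F}$ whose restriction to $\gamma\cap V_i$ equals $\dot\gamma$. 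Flowing $y$ successively by the $X_i$ produces a curve $\tilde\gamma$ that stays in $N_y$ (because each $X_i\in\Xi_{\F}$), stays in $P_y$ (by (ii) its distance to $P$ remains $<\eps$, and continuity confines it to the component $P_y$), and projects onto $\gamma$ provided the $X_i$ are \emph{projectable}, i.e.\ $\pi\circ\phi^{X_i}_t=\phi^{X_i}_t\circ\pi$.

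The main obstacle is exactly this projectability: not every $X\in\Xi_{\F}$ extending $\dot\gamma$ has a flow commuting with $\pi$. The standard remedy is to work in the normal-exponential trivialization of the tube, in which $\pi$ becomes the bundle projection $B_\eps\subset\nu N_p\to N_p$; one extends $\dot\gamma$ from $P$ to the tube via the flat connection on $\nu N_p|_P$ (keeping the fiber coordinate constant), and verifies that the extension lies in $\Xi_{\F}$. Here (II) enters a second time: horizontal geodesics remain perpendicular to every leaf they pass through, hence the radial homotheties in the tube preserve $\F$, which is the heart of Molino's local structure theorem cited above.
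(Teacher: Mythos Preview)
The paper does not prove this lemma at all: it is introduced with ``For more details we refer to \cite{molino}, Chapter~6, wherefrom we just cite the following facts,'' so there is no in-paper proof to compare against. Your sketch is essentially the standard argument one extracts from Molino: (ii) via the gradient of $d(\cdot,P)$ being the radial horizontal direction, and (i) via transversality of leaves to slices plus a path-lifting in $P$.

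One remark on the projectability step. Your fix---extending $\dot\gamma$ fiberwise-constantly in the normal-exponential trivialization and appealing to the fact that radial homotheties preserve $\F$---is correct in spirit, but note that in the paper's logical order the homothety statement is Lemma~\ref{lemposhom}, which is stated \emph{after} Lemma~\ref{lemlocaltube}. Since both are quoted from Molino rather than proved here, this is not a genuine circularity, but if you were writing a self-contained proof you would want to establish the homothety-invariance (or, equivalently, that the connection-horizontal lift of a tangent field on $P$ lies in $\Xi_\F$) first. Alternatively, surjectivity of $\pi|_{P_y}$ can be obtained without explicit projectable lifts: the image is open because $\pi|_{P_y}$ is a submersion, and it is closed in $P$ because by (ii) the plaque $P_y$ sits in the compact tube $S_{d(y,P)}^P$ over the relatively compact $P$, so any limit of points $\pi(y_n)$ with $y_n\in P_y$ is again hit by $\pi$ on the closure of $P_y$ in that tube, which one checks is still $P_y$.
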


For every point in $\Sigma_{\rm reg}$ there exists an open neighborhood $U,$ called \emph{simple,} such that the projection $\pi_U : U \to \bar U$ onto the manifold of plaques is a submersion. The manifold $\bar U$ can be endowed with a Riemannian metric $g_{\bar U}$ such that the transversal part of $g$ is the pullback of $g_{\bar U}$ via $\pi,$ and 

\begin{proposition}
If $\gamma$ is a horizontal geodesic in $U$, its projection $\pi_U(\gamma)$ is a geodesic in $\bar U$.
\end{proposition}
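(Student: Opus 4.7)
The plan is to recognize that on the simple neighborhood $U$, the map $\pi_U : U \to \bar U$ is by construction a Riemannian submersion: its fibers are the plaques of $\F|_U$, the horizontal distribution is their orthogonal complement, and the condition that the transversal part of $g$ is the pullback of $g_{\bar U}$ means precisely that $d\pi_U$ restricts to a linear isometry on each horizontal subspace. Given this, the claim is a direct instance of O'Neill's classical theorem that horizontal geodesics of a Riemannian submersion project to geodesics of the base. What one has to check is that the hypothesis ``horizontal geodesic'' in the sense of condition (II) of a singular Riemannian foliation agrees with ``horizontal curve'' in the submersion sense, which is clear: $\gamma$ meets some plaque perpendicularly, and by (II) remains perpendicular to every plaque it meets, so $\dot\gamma(t)$ lies in the horizontal distribution for all $t$.

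The cleanest execution is via the O'Neill connection formula. Set $\bar\gamma := \pi_U \circ \gamma$, fix $t_0$, pick any local vector field $\bar X$ on $\bar U$ with $\bar X(\bar\gamma(t)) = \dot{\bar\gamma}(t)$ for $t$ near $t_0$, and let $X$ denote its horizontal lift to $U$. Then $X$ is basic (hence $\pi_U$-related to $\bar X$), and $X \circ \gamma = \dot\gamma$ near $t_0$. For basic horizontal fields one has the standard identity
\[
d\pi_U\bigl(\nabla_X X\bigr) = \bar\nabla_{\bar X}\bar X \circ \pi_U,
\]
so $\nabla_X X$ differs from the horizontal lift of $\bar\nabla_{\bar X}\bar X$ only by a vertical term (the O'Neill tensor contribution, vanishing here as $A_X X = 0$ by antisymmetry). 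Since $\gamma$ is a geodesic, $\nabla_{\dot\gamma}\dot\gamma = (\nabla_X X)\circ\gamma = 0$, and applying $d\pi_U$ yields $\bar\nabla_{\dot{\bar\gamma}}\dot{\bar\gamma}(t_0) = 0$. As $t_0$ was arbitrary, $\bar\gamma$ is a geodesic in $\bar U$.

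The main subtlety, and the only point where the singular Riemannian foliation setting needs verification rather than quotation from the Riemannian submersion literature, is the existence of the basic extension $X$ of $\dot\gamma$ and the validity of the projection formula above. Both are unproblematic because $U \subset \Sigma_{\rm reg}$ and $U$ is simple: the horizontal distribution on $U$ is a genuine smooth subbundle complementary to the tangent distribution of the plaques, horizontal lifts of smooth vector fields on $\bar U$ exist and are $\pi_U$-related to their projections, and the Levi-Civita connection compatibility used in the O'Neill identity depends only on $\pi_U$ being a Riemannian submersion in this classical sense. I expect no further obstacle beyond these verifications.
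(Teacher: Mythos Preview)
Your argument is correct. The paper does not actually supply a proof of this proposition: it is stated in the preliminaries as a fact drawn from Molino's book \cite{molino}, immediately after introducing the local quotient $\bar U$ and the transverse metric $g_{\bar U}$. Your recognition that $\pi_U$ is a Riemannian submersion on the regular simple set $U$, together with the observation that condition (II) forces a horizontal geodesic to be horizontal in the submersion sense, reduces the claim to O'Neill's projection theorem, which is exactly the standard route and presumably what Molino does as well. The O'Neill identity you invoke and the vanishing $A_X X = 0$ are both valid, so there is no gap.
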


Let us have a look at a plaque $P_N.$ By the composition of homothetical transformations in the normal bundle $\nu N$ with $\exp^\perp: B_{\eps}(P_N) \to {\rm Tub}_\eps(P_N),$ we can define a \emph{homothetical transformation} $h_\lambda$ with respect to $P_N$ for all positive $\lambda \in \R$ within the domain ${\rm Tub}_{\min (\eps, \frac{1}{\lambda}\eps)}(P_N),$ and

\begin{lemma}\label{lemposhom}
The homothetical transformation $h_\lambda$ sends plaque to plaque and therefore respects the singular foliation $\F$.
\end{lemma}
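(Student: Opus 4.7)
The plan is to exhibit $h_\lambda$ as a rescaling along horizontal geodesics emanating from $P_N$ and then transport plaques along these geodesics. Explicitly, for $y = \exp^\perp(v) \in {\rm Tub}_\eps(P_N)$, the curve $\gamma_v(t) := \exp^\perp(tv)$ is the horizontal geodesic from $\pi(y) \in P_N$ to $y$, and $h_\lambda(y) = \gamma_v(\lambda)$. Thus $h_\lambda$ moves each point $y$ along its unique horizontal geodesic from $P_N$, rescaling the geodesic parameter by $\lambda$.

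To prove that plaques go to plaques, fix $y \in {\rm Tub}_{\min(\eps,\,\eps/\lambda)}(P_N)$ and a smooth curve $\alpha : (-\delta, \delta) \to P_y$ with $\alpha(0) = y$. Write $\alpha(s) = \exp^\perp(v(s))$ and introduce the variation through horizontal geodesics
$$\Gamma(s, t) := \exp^\perp\bigl(t\, v(s)\bigr),$$
so that $\Gamma(s, 1) = \alpha(s)$ and $\Gamma(s, \lambda) = h_\lambda(\alpha(s))$. The goal is to show that $s \mapsto \Gamma(s, \lambda)$ lies in a single plaque. A first observation: by Lemma \ref{lemlocaltube}(ii) the function $s \mapsto d(\alpha(s), P_N) = |v(s)|$ is constant, hence $\Gamma(\cdot, \lambda)$ is already confined to the single tube $S_{\lambda |v|}^{P_N}$.

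To upgrade \emph{same tube} to \emph{same plaque}, I would pick a foliated vector field $X$ on ${\rm Tub}_\eps(P_N)$ with $X(y) = \alpha'(0)$, whose existence follows from the transitivity condition (I) combined with the local structure of $\F$ around $P_N$ (see \cite{molino}, Chapter 6); its flow $\phi_s^X$ sends plaque to plaque. I would then establish the conjugation identity $h_\lambda \circ \phi_s^X \circ h_\lambda^{-1} = \phi_s^{\tilde X}$ for a foliated vector field $\tilde X$ on the intersection of the domains. Granted this, $h_\lambda(\alpha(s)) = h_\lambda(\phi_s^X(y)) = \phi_s^{\tilde X}(h_\lambda(y))$ stays in the plaque $P_{h_\lambda(y)}$, as required.

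The main obstacle is the conjugation identity, which amounts to showing that the image under $h_\lambda$ of a plaque-tangent foliated vector field is again plaque-tangent. Geometrically this says: in the variation $\Gamma$, if the variation field $\partial_s\Gamma(s, t)$ is tangent to the ambient leaf at $t = 1$, then it remains tangent to the leaf through $\Gamma(s, t)$ for every admissible $t$. This is a Jacobi-field statement that uses the adapted metric condition (II) essentially. Property (II) forces $\gamma_{v(s)}'(t) \perp T_{\Gamma(s, t)} N_{\Gamma(s, t)}$ for every $t$, and combined with Lemma \ref{lemlocaltube}(ii), a standard first-variation argument then identifies the leaf-tangent component of the Jacobi field at arbitrary $t$ with the corresponding component at $t = 1$, closing the argument.
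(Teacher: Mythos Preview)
The paper does not prove this lemma; it is cited from \cite{molino}, Chapter~6. Molino's argument is quite different from yours: he works in a single slice $S_\eps(p)$, observes that the plaque intersections with the slice define a singular foliation there, and shows directly---using the transitivity condition (I) and the local product description of the tubular neighbourhood---that this slice foliation, pulled back to $\nu_pN$ via $\exp_p$, is invariant under positive scalings. Since by Lemma~\ref{lemlocaltube}(i) every plaque is recovered from its slice intersection by sliding along $P_N$, scale-invariance on the slice gives the lemma.

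Your route via variations and Jacobi fields is an interesting alternative, but the last step does not go through. The first-variation formula, together with Lemma~\ref{lemlocaltube}(ii), yields only that $\langle J(t),\gamma_v'(t)\rangle = 0$ for all $t$: the variation field has no \emph{radial} component. That is strictly weaker than $J(t)\in T_{\Gamma(0,t)}N_{\Gamma(0,t)}$, because the horizontal space $\nu N_{\Gamma(0,t)}$ at an interior point is in general larger than the line spanned by $\gamma_v'(t)$. Nothing in the first variation controls the remaining horizontal, non-radial part of $J(t)$.

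In fact, the statement you are trying to invoke---that an $N$-Jacobi field tangent to the leaves at $t=0$ and $t=1$ is tangent at every $t$---is precisely the local form of the condition ``without horizontally conjugate points'' (Definition~\ref{defwhcp}). The paper treats that condition as a substantial additional hypothesis, not as a consequence of axioms (I) and (II); indeed, the equivalence of Lemma~\ref{lemposhom} with this local Jacobi tangency means your reduction is circular rather than a simplification. To close the argument you would need an independent reason, specific to minimal geodesics inside a tubular neighbourhood, why such tangency propagates---and that is exactly the content Molino supplies by the slice analysis.
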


If $\gamma$ is a horizontal geodesic, it follows by this lemma that within a tubular neighborhood of $\gamma(0)$
$$\dim(N_{\gamma(t)}) = \text{ constant for all } t > 0.$$


\subsection{Sliding along the leaves}\label{ss:sliding}

If $U \subset M$ is simple, the belonging of points to their plaque in $U$ uniquely defines diffeomorphisms between local transversals. Given $x$ and $x'$ in the same plaque, we call the corresponding diffeomorphism \emph{sliding along the leaves from $x$ to $x'$,} see \cite{molino}, Section 1.7. By coverings with simple sets, this definition can easily be extended to arbitrary points in a regular leaf and corresponding local transversals that are sufficiently small.
Let $c$ denote a path from $x$ to $x'$ and let $h_c$ denote the germ of this diffeomorphism at $x$ along $c.$ Note that $h_c$ does neither depend on the choice of the covering nor on the choice of local transversals, and concerning $c$ only on its homotopy class. This germ is called \emph{sliding along the leaves from $x$ to $x'$ along $c$.}

For every point $c(t), t \in [0,1],$ the slice of minimal radius of the covering tubes $S_{\eps_{\min}}(c(t))$ is a local transversal which then defines for every $y \in S_{\eps_{\min}}(x)$ a unique, continuous path $\tilde c_y: [0,1] \to L_y$ from $y$ to its image under sliding along $c$ contained in $S_{\eps_{\min}}(x'),$ whereby $d(\tilde c_y(t), c(t)) = d(y, x)$ for all $t \in [0,1].$

Thus, in the following we sometimes speak about sliding of distances instead of points, which can be used not only within regular leaves and simple neighborhoods but -- by Lemma \ref{lemlocaltube} -- always as long as the path $c$ is completely contained in a tubular neighborhood.


\subsection{Focal points, cut points, and the cut locus of a leaf}

We briefly recall the notions above. For details see \cite{docarmo}, Section 10.4, and \cite{toeben}:

Let $N \subset M$ be a submanifold with tangent bundle \mbox{$TN \subset TM|_N,$} normal bundle $\nu N,$ and normal exponential map \mbox{$\exp^\perp: \nu N \to M.$} If $v$ is a critical point of $\exp^\perp,$ we call $v$ a \emph{focal vector} and $\exp^\perp(v)$ a \emph{focal point.}

Let $\gamma$ denote a geodesic with $\gamma(0) \in N, \dot\gamma(0) \perp N.$ An \emph{$N$-Jacobi field along $\gamma$} is a Jacobi field $J$ along $\gamma$ such that $$J(0) \in T_{\gamma(0)}N\ \text{ and }\ J'(0) + A_{\dot\gamma(0)}(J(0)) \in \nu_{\gamma(0)}N,$$ where $A_{\dot\gamma(0)}: T_{\gamma(0)}N \to T_{\gamma(0)}N;\; A_{\dot\gamma(0)}X = -(\nabla^M_X\dot\gamma(0))^{T}$ denotes the shape operator. $J$ is a $N$-Jacobi field if and only if it is the variational vector field of a variation $f_s$ of $\gamma,$ where all paths $f_s$ are geodesics with $f_s(0) \in N, \dot f_s(0) \perp N.$
\begin{proposition} The following holds for a geodesic $\gamma$ perpendicular to $N$ in $\gamma(0)$:

\begin{tabular}[t]{lcl}
$\gamma(t)$ is a focal point of $N$ & $\iff$ & $t\dot\gamma(0)$ is a critical point of $\exp^\perp$\\
& $\iff$ & there exists a non-vanishing $N$-Jacobi\\
&& field $J$ along $\gamma$ with $J(t) = 0.$
\end{tabular}
\end{proposition}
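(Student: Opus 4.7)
The first equivalence is essentially by definition: the proposition has already defined a focal point as the image under $\exp^\perp$ of a critical point, and since $\gamma(t) = \exp^\perp(t\dot\gamma(0))$, the point $\gamma(t)$ is focal precisely when $t\dot\gamma(0)$ is critical for $\exp^\perp$. So my real task is to prove the second equivalence, namely that $v := t\dot\gamma(0)$ is a critical point of $\exp^\perp$ if and only if there is a non-vanishing $N$-Jacobi field $J$ along $\gamma$ with $J(t) = 0$.

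The plan is to identify $T_v(\nu N)$ linearly with the space of $N$-Jacobi fields along $\gamma$ in such a way that $(d\exp^\perp)_v$ becomes evaluation at $t$. Concretely, given $W \in T_v(\nu N)$, I would take a smooth curve $s \mapsto v(s)$ in $\nu N$ with $v(0)=v$, $v'(0)=W$, and form the variation $f(s,\tau) := \exp^\perp(\tau v(s))$. Since $v(s) \in \nu N$ for all $s$, each $f(s,\cdot)$ is a geodesic starting perpendicularly on $N$, so the variational field $J_W(\tau) := \partial_s f(0,\tau)$ is an $N$-Jacobi field along $\gamma$. A standard computation, using the decomposition of $T_v(\nu N)$ into horizontal and vertical parts via the Levi-Civita connection on $\nu N$, gives $J_W(0) = (d\pi)_v(W) \in T_{\gamma(0)}N$ and $J_W'(0) + A_{\dot\gamma(0)}J_W(0) = $ vertical component of $W \in \nu_{\gamma(0)}N$, confirming the $N$-Jacobi boundary conditions. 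The key identity is then $(d\exp^\perp)_v(W) = \partial_s f(0,t) = J_W(t)$, obtained by swapping the order of partials at $(s,\tau)=(0,t)$.

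Next I would check that $W \mapsto J_W$ is a linear isomorphism from $T_v(\nu N)$ onto the space of $N$-Jacobi fields along $\gamma$. A dimension count suffices: $\dim T_v(\nu N) = \dim \nu N = \dim N + (n - \dim N) = n$, while $N$-Jacobi fields are parametrised by $(J(0),(J'(0))^\perp) \in T_{\gamma(0)}N \oplus \nu_{\gamma(0)}N$, the tangential component of $J'(0)$ being prescribed by $J(0)$ through the shape operator. Thus both spaces have dimension $n$, and the explicit formulas for $J_W(0)$ and $J_W'(0)+A_{\dot\gamma(0)}J_W(0)$ exhibit the inverse map. In particular $W = 0 \iff J_W \equiv 0$.

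Combining these ingredients gives the desired chain of equivalences: $v$ is critical iff there exists $W \neq 0$ in $\ker(d\exp^\perp)_v$, iff there exists $W \neq 0$ with $J_W(t)=0$, iff there exists a non-vanishing $N$-Jacobi field $J$ along $\gamma$ with $J(t)=0$. The main technical obstacle is the horizontal/vertical identification of $T_v(\nu N)$ and the resulting computation of $(d\exp^\perp)_v$; everything else is either a dimension count or routine bookkeeping of initial conditions for Jacobi fields. Since this is the classical computation found in \cite{docarmo}, I would simply refer to it rather than reproduce it in full.
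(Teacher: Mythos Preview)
Your proposal is correct and matches the paper's treatment: the paper does not prove this proposition at all but simply states it as a recalled fact with a reference to \cite{docarmo}, Section 10.4, and you likewise conclude by deferring to that same classical computation. The outline you sketch is precisely the standard argument found there.
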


The \emph{multiplicity of $\gamma(t)$} is equal to the dimension of the kernel of $d(\exp^\perp)_{t\dot\gamma(0)}.$ This also equals the dimension of the space of Jacobi fields along $\gamma$ vanishing in $t.$

Let now $N \subset M$ be properly immersed. Let $\gamma_v$ denote the geodesic starting in the direction of $v \in TM.$ Furthermore, let $\nu^1N$ be the unit normal bundle of $N.$ Then the \emph{cut locus function} is defined by $$\sigma: \nu^1N \to [0, \infty]; \sigma(v) = \sup \{t \in \R \mid d(\gamma_v(t), N) = t\}.$$ The \emph{cut locus of $N$} is defined to be the set $$\mathcal C_N := \exp^\perp \{\sigma(v)v \mid \sigma(v) < \infty, v \in \nu^1N\}.$$

We call $v \in \nu N$ \emph{minimal}, if $\|v\| \le \sigma(\frac{v}{\|v\|})$. The associated geodesic $\gamma_v|_{[0, 1]}$ and each of its  parametrizations of constant speed are called \emph{minimal geodesics.} A minimal vector $v$ is a \emph{cut vector,} if there is a different minimal $w \in \nu N$ with the same end point; $\exp^\perp(v)$ is then called a \emph{cut point.} Then, $\|v\| = \|w\|.$ The following proposition holds:

\begin{proposition}[\cite{toeben}]\label{propcutlocus}
\begin{trivlist}
\item{\rm (i)} Each point of $\mathcal C_N$ is a cut point or a focal point.
\item{\rm (ii)} The cut locus function $\sigma$ is upper semi-continuous, and
\item{\rm (iii)} if $N$ is properly immersed, $\sigma$ is continuous and $\mathcal C_N$ is closed in $M.$
\end{trivlist}
\end{proposition}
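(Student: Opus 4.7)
The plan is to treat the three items in the order (ii), (i), (iii), since (ii) alone does not require an extraction argument on $\nu^1N$. The key observation is that $\sigma(v)<t$ if and only if $d(\gamma_v(t),N)<t$: the function $s\mapsto d(\gamma_v(s),N)$ is $1$-Lipschitz and bounded above by $s$, so once it strictly drops below $s$ it remains strictly below. Given $v_n\to v$ in $\nu^1N$ and any $t>\sigma(v)$, continuity of the geodesic flow together with continuity of the distance-to-$N$ function yields $d(\gamma_{v_n}(t),N)<t$ eventually, hence $\sigma(v_n)\le t$; letting $t\downarrow\sigma(v)$ proves (ii).

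For (i), fix $q=\exp^\perp(\sigma(v)v)\in\mathcal C_N$ and pick $t_n\downarrow\sigma(v)$. By the characterisation above there exist feet $p_n\in N$ and unit normal vectors $v_n\in\nu^1_{p_n}N$ with $\exp^\perp(s_nv_n)=\gamma_v(t_n)$ and $s_n:=d(\gamma_v(t_n),N)<t_n$. The $p_n$ stay in a bounded neighbourhood of the basepoint $p_0=\gamma_v(0)$, so on a subsequence $p_n\to p\in N$, $v_n\to w\in\nu^1_pN$, and $s_n\to\sigma(v)$, giving $\exp^\perp(\sigma(v)w)=q$. If $p\neq p_0$, or $p=p_0$ with $w\neq v$, then $q$ is the endpoint of two distinct normal minimising geodesics and hence is a cut point. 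Otherwise the vectors $s_nv_n$ and $t_nv$ are distinct for large $n$ but both map to $\gamma_v(t_n)$ and converge to $\sigma(v)v$; therefore $\exp^\perp$ fails to be locally injective at $\sigma(v)v$, which is a critical point, and $q$ is a focal point.

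For (iii), lower semi-continuity is the content, and closedness of $\mathcal C_N$ then follows by combining continuity of $\sigma$ with continuity of $\exp^\perp$ applied to the map $v\mapsto\exp^\perp(\sigma(v)v)$ on $\{\sigma<\infty\}\subset\nu^1N$. Suppose $v_n\to v$ but $\sigma(v_n)\to s<\sigma(v)$. Repeating the extraction of (i) on the cut-or-focal witnesses at $\gamma_{v_n}(\sigma(v_n))$, and using that $N$ is properly immersed so that the feet of the witnessing segments cannot escape to infinity in $N$, one obtains in the limit either a second minimising normal geodesic arriving at $\gamma_v(s)$ or a nonzero $N$-Jacobi field along $\gamma_v$ vanishing at $s$. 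Either way $\gamma_v(s)\in\mathcal C_N$, forcing $s\ge\sigma(v)$, a contradiction.

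The main obstacle is precisely this compactness: the feet $p_n\in N$ of the competing minimising segments must admit a limit in $N$. In (i) this is essentially local and follows from relative compactness of a neighbourhood of $p_0$ inside $N$, but in (iii) the feet could \emph{a priori} escape along the immersed submanifold, and it is exactly here that the properness of the immersion is indispensable, which is why (iii) is the part of the proposition that genuinely uses that hypothesis.
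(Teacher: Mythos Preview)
The paper does not contain a proof of this proposition: it is quoted from T\"oben's thesis \cite{toeben} (with background in \cite{docarmo}), so there is no argument in the text to compare against. What you have written is essentially the standard proof one finds for the cut locus of a closed submanifold, and it is correct in outline.

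A few remarks on the details. Your characterisation $\sigma(v)<t \iff d(\gamma_v(t),N)<t$ is correct and makes (ii) immediate. In (i), the extraction of a convergent subsequence of the feet $p_n\in N$ already uses that $N$ is properly immersed: the $p_n$ lie in a bounded region of $M$, but without properness they need not have an accumulation point \emph{in} $N$. Your final paragraph suggests (i) is ``essentially local'' and does not need this hypothesis, which is not quite right; since the paper introduces the cut locus only after the standing assumption ``Let now $N\subset M$ be properly immersed'', this is harmless in context, but the distinction you try to draw between (i) and (iii) is weaker than you indicate. The concluding step of (i), inferring that $\sigma(v)v$ is a critical point of $\exp^\perp$ from the failure of local injectivity, is fine once one observes that a regular point of $\exp^\perp$ would give a local diffeomorphism.

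For (iii), your lower semi-continuity argument is correct, including the case where the competing segments collapse in the limit onto $\gamma_v|_{[0,s]}$ itself, which again yields a focal point at $\gamma_v(s)$ and the same contradiction. The deduction of closedness of $\mathcal C_N$ is slightly underwritten: continuity of $v\mapsto \exp^\perp(\sigma(v)v)$ alone does not make its image closed. One should take $q_n\to q$ with $q_n=\exp^\perp(\sigma(v_n)v_n)$, note that the base points of the $v_n$ stay in a bounded region of $M$ (since $\sigma(v_n)=d(q_n,N)\to d(q,N)$), invoke properness to extract $v_n\to v$ in $\nu^1N$, and then use the continuity of $\sigma$ just proved to conclude $q=\exp^\perp(\sigma(v)v)\in\mathcal C_N$.
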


\section{Exceptional leaves -- a stratification}\label{s:ex}

In general, the leaves of a singular Riemannian foliation are embedded submanifolds. For the following, we additionally assume the embeddings to be proper. It is easy to see that a tubular neighborhood then contains only finitely many plaques of every meeting leaf. As a counterexample take the one-dimensional, irrational foliation of a torus.

\subsection{Global tubular neighborhoods}

For a singular Riemannian foliation with properly embedded leaves, we can show the existence of global tubular neighborhoods, which will be a useful tool later:

\begin{theorem}\label{theotube}
Let $M$ be complete and $(M, \mathcal F)$ be a singular Riemannian foliation with properly embedded leaves. Then, for every leaf $N \subset M$ there is an $\varepsilon_N > 0$ such that the normal exponential map $${\rm exp}_N^{\perp}: B_{\varepsilon_N}(N) \to {\rm Tub}_{\varepsilon_N}(N)$$ is a diffeomorphism.
 \end{theorem}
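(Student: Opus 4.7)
The plan is to prove the theorem in two independent stages: first establish a uniform lower bound on the \emph{depth} of local tubes along $N$, then on the \emph{separation} between plaques of $N$ inside any such tube. Both quantities will turn out to be invariant under sliding along the leaf, hence constant by connectedness of $N$.

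For the depth, at each $p \in N$ the local construction cited from \cite{molino} yields a tubular neighborhood ${\rm Tub}_{\varepsilon(p)}(V_p)$ for some radius $\varepsilon(p) > 0$ and some open neighborhood $V_p \ni p$ in $N$. I would argue that this local radius is a sliding invariant: by Section \ref{ss:sliding} and Lemma \ref{lemlocaltube}(ii), sliding along a path $c$ in $N$ produces foliation-preserving germs of diffeomorphisms between neighboring slices that preserve the radial distance to $N$, and these can be pieced together along $c$ to transport a local tube at one end of $c$ to one of the same radius at the other. Combining this with connectedness of $N$, one obtains a uniform $\varepsilon_N^{\rm loc} > 0$ such that $\exp^\perp$ is a local diffeomorphism of $\nu N$ into $M$ on vectors of length $<\varepsilon_N^{\rm loc}$ at every point of $N$.

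For the separation, in each local tube ${\rm Tub}_{\varepsilon_N^{\rm loc}}(V_p)$ the assumption of proper embeddedness forces $N$ to meet the tube in only finitely many plaques, and by Lemma \ref{lemlocaltube}(ii) each such plaque has constant distance to the central one. Let $\eta(p)>0$ denote the minimum of these distances. The same sliding argument transports the plaque structure of the tube at $p$ to the tube at any $p' \in N$ while preserving distances to the central plaque, so $\eta$ is constant along $N$; call this common value $\eta_N>0$.

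Setting $\varepsilon_N := \tfrac{1}{2}\min(\varepsilon_N^{\rm loc},\eta_N)$, the injectivity of $\exp^\perp$ on $B_{\varepsilon_N}(N)$ follows quickly: if $v,w \in B_{\varepsilon_N}(N)$ satisfy $\exp^\perp(v)=\exp^\perp(w)$, then their base points $p:=\pi(v)$ and $p':=\pi(w)$ lie at ambient distance $<2\varepsilon_N \le \eta_N$ of one another, so $p'$ must lie on the same plaque as $p$ in the tube around $V_p$; local injectivity of $\exp^\perp$ on $B_{\varepsilon_N^{\rm loc}}(V_p)$ then gives $v=w$. I expect the main technical obstacle to lie in making the two sliding-invariance claims rigorous, namely in upgrading the germ-level sliding diffeomorphisms from Section \ref{ss:sliding} to honest diffeomorphisms between local tubes that respect both the radial distance function to $N$ and the finite plaque structure of $N$ inside each tube.
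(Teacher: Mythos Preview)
Your two-stage plan is natural, and you have correctly identified where the difficulty lies, but the obstacle you flag in the last paragraph is not merely technical: it is a genuine circularity. Sliding along a path $c\subset N$ is, by its very construction in Section~\ref{ss:sliding}, defined only on the \emph{minimum} radius of the covering tubes along $c$: one covers $c$ by local tubular neighborhoods and composes the local slidings, so the resulting map $h_c$ acts only on a slice of radius $\varepsilon_{\min}=\min_i\varepsilon_i$ taken over the cover. Thus to ``transport a local tube of radius $\varepsilon(p)$ from $p$ to $p'$'' you would already need tubes of radius at least $\varepsilon(p)$ at every point of $c$, which is precisely the uniform lower bound you are after. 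Note also that even where it is defined, sliding is an isometry only for the \emph{transverse} part of $g$; it carries no direct information about the second fundamental form of $N$ or the ambient curvature along normal geodesics, both of which enter the focal-point condition for $N$. Your separation step has the same defect, since transporting the plaque pattern of $N$ also requires $h_c$ to act on a full $\varepsilon(p)$-slice.

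The paper circumvents this by working with the cut locus $\mathcal C_N$ rather than with local tube radii. First it establishes global equidistance of leaves by a broken-geodesic argument along a fixed horizontal geodesic $\gamma$ joining $N_0$ to $N_1$ (not along $N_0$ itself): one covers the compact segment $\gamma$ by finitely many local tubes centred on plaques \emph{along $\gamma$} and builds, from any $x\in N_0$, a broken geodesic of the same length to $N_1$. No circularity arises because the cover is of a fixed compact arc, independent of the target radius. Second, using that the cut-locus function $\sigma$ is continuous for properly embedded $N$ (Proposition~\ref{propcutlocus}), one shows $\inf_{p\in N}d(p,\mathcal C_N)>0$ by contradiction: if this infimum were zero, the equidistance step would force two distinct plaques of $N$ to appear inside a single small tube ${\rm Tub}_\varepsilon(P_N)$, contradicting the choice of $\varepsilon$. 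Any $\varepsilon_N$ below this positive infimum then works.
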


 \begin{proof} The proof is devided into two steps:
\begin{trivlist}
\item{(i)} The distance of the leaves is globally constant, i.e.:  If $\gamma: [0,1] \to M$ is a horizontal geodesic that is locally minimizing between $N_0$ and $N_1,$ then for every $x \in N_0$ there is a horizontal geodesic of length $L(\gamma)$ from $x$ to $N_1,$ and this geodesic, too, is locally minimizing between $N_0$ and $N_1.$ Hereby, we call $\gamma$ \emph{locally minimizing,} if there is a neighborhood $P_0 \subset N_0$ around $\gamma(0)$ and a neighborhood $P_1 \subset N_1$ around $\gamma(1)$ such that $d(P_0, P_1) = L(\gamma).$

\item{(ii)} The distance between a leaf $N$ and its cut locus $\mathcal C_N$ is bounded away from zero. For every lower bound $r > 0,$ it follows by definition of $\mathcal C_N$ that the map $${\rm exp}_N^{\perp}: B_{r}(N) \to {\rm Tub}_{r}(N)$$ is a diffeomorphism. 
\end{trivlist}

We prove step (i) as follows: With regard to $x,$ it is clear that this property is closed. That it is open, too, does not follow directly from the existence of local tubular neighborhoods as, in general, $\gamma(0)$ is not contained in such a neighboorhood of a plaque of $N_1,$ for example if $\gamma(0)$ is singular but $N_1$ regular. So, we have to show that there is a neighborhood $U \subset P_0$ of $\gamma(0)$ such that $d(x, P_1) = L(\gamma)$ for all $x \in U.$ For this purpose take a covering of $\gamma$ by tubular neighborhoods as follows:

\noindent\begin{minipage}{0.48\textwidth}
\begin{center}
\begin{overpic}[width=6cm,
]{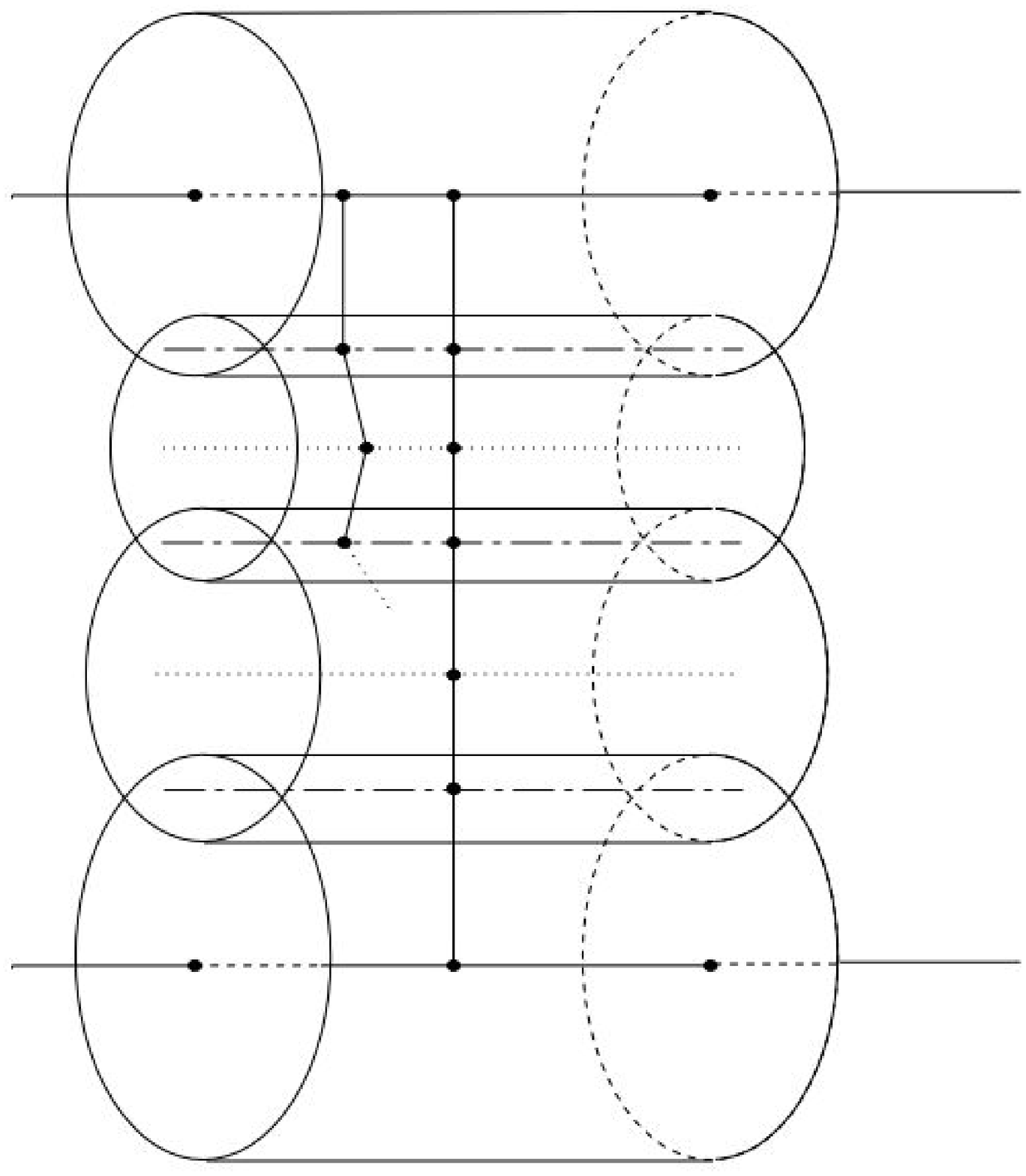}
\put(40,79){$\gamma(0)$}\put(40,67){$\gamma(s_1)$}\put(40,58){$\gamma(t_1)$}\put(40,51){$\gamma(s_2)$}\put(40,40){$\gamma(t_2)$}\put(40,30){$\gamma(s_3)$}\put(40,15){$\gamma(1)$}\put(78,79){$N_{0}$}\put(78,15){$N_{1}$}\put(30,80){$x$}\put(30,75){$c_1$}\put(31,65){$c_2$}
\end{overpic} 
\end{center}
\vspace{5mm}
\end{minipage}
\begin{minipage}{0.51\textwidth}
\vspace{-4mm}Around the points $\gamma(0)$ and $\gamma(1),$ let the plaques $P_0 \subset N_0$ and $P_1 \subset N_1$  be centers of tubular neighborhoods. The centers of all neighborhoods of the covering shall intersect $\gamma.$ Furthermore, choose a minimal subdivision $$0 = t_0 < t_1 < \ldots < t_k < t_{k+1} = 1,$$ where $P_{t_i}$ is such a centering plaque for all $i,$ i.e. each point on $\gamma$ lies either in one or in the intersection of two consecutive tubular neighborhoods. Let $\{s_i\}_{i=1}^{k+1} \subset [0, 1]$ be chosen in such a way that $$t_0 < s_1 < t_1 < s_2 < \ldots < t_k < s_{k+1} < t_{k+1}$$ and $\gamma(s_i)$ lies in the intersection: $$\gamma(s_i)\in {\rm Tub}(P_{t_{i-1}}) \cap {\rm Tub}(P_{t_i}).$$
\end{minipage}
Now, let $U \subset P_0$ be a neighborhood -- sufficiently small -- of $\gamma(0)$ and $x \in U.$ 
As $\gamma$ is contained completely in the finite, open covering of tubular neighborhoods, there exists a $U$ so small that the following construction of a broken geodesic is possible within the covering.

In the neighborhood ${\rm Tub}(P_0),$ there is a minimal geodesic $c_{1} : [0,1] \to {\rm Tub}(P_0),$ with $c_{1}(0) = x$ and $c_{1}(1) \in N_{s_{1}},$ and the length of $c_{1}$ is equal to the length of $\gamma$ restricted to $[0, s_{1}].$ Similarly, there is a minimal geodesic $c_2$ from $c_1(1) \subset N_{s_{1}}$ to $P_{t_1}$ of length $L(\gamma|_{[s_1, t_{1}]}).$ Proceeding like that,  we obtain a broken geodesic $c = c_1 c_2 \ldots c_{2k+2}$ of length $L(c) = L(\gamma) = d(\gamma(0), P_1)$ which starts in $x \in U$ and ends in $P_1.$ Thus, the distance between $x$ and $P_1$ has to be less equal than or equal to $L(c) = L(\gamma).$

But $\gamma$ is locally minimizing and equality follows by $$L(\gamma) \le d(x, P_1) \le L(\gamma).$$

To prove step (ii), let $N \in \F$ be arbitrary. Furthermore, choose $\eps > 0$ in such a way that there is a plaque $P_N \subset N$ with tubular neighborhood ${\rm Tub}_{\varepsilon}(P_N).$ In addition, we can assume that the leaf $N$ meets the set ${\rm exp}_N^\perp(B_\eps(P_N))$ just once, namely as centering leaf.

Now, let us assume that $\inf\{d(p, \mathcal C_N) \mid p \in N\} = 0.$ As the leaves are properly embedded submanifolds, Proposition \ref{propcutlocus} tells us that the cut locus function $\sigma_N$ is continuous. Thus, there exists a leaf $L$ with $d(P_L, P_N) = \frac{\eps}{3}$ where $P_L$ is a connected component of $L \cap {\rm Tub}_{\varepsilon}(P_N),$ there exists a point $q \in N$ with  $d(q, \mathcal C_N) < \frac{\eps}{3},$ and by (i), there exists a horizontal geodesic $\gamma$ that realizes the distance $d(q, \mathcal C_N)$ and meets the leaf $L$ at length $\frac{\eps}{3}.$
But $\gamma$ cannot be minimal between $q \in N$ and $L$ because the cut locus of $N$ meets $\gamma$ previous to $L.$ Thus, $\delta := d(q, L) < \frac{\eps}{3}$. Sliding along the leaves with this local distance from $L$ to $N$ up to $P_N$ which is possible by (i) shows that there is a plaque of $L$ lying within the tubular neighborhood ${\rm Tub}_{\varepsilon}(P_N)$ on the $\delta$-sphere of $N.$ But $P_L \subset L$ lies on the $\frac{\eps}{3}$-sphere around $P_N.$ Once again, sliding along the leaves with distance $\delta$ up to $P_L$ shows that there must be one more plaque of $N$ within the neighborhood ${\rm Tub}_{\varepsilon}(P_N),$ a contradiction to the choice of $\eps$!

\begin{minipage}[t]{1\textwidth}
\vspace{1mm}
\hspace{-5mm}\begin{overpic}[width=12cm,
]{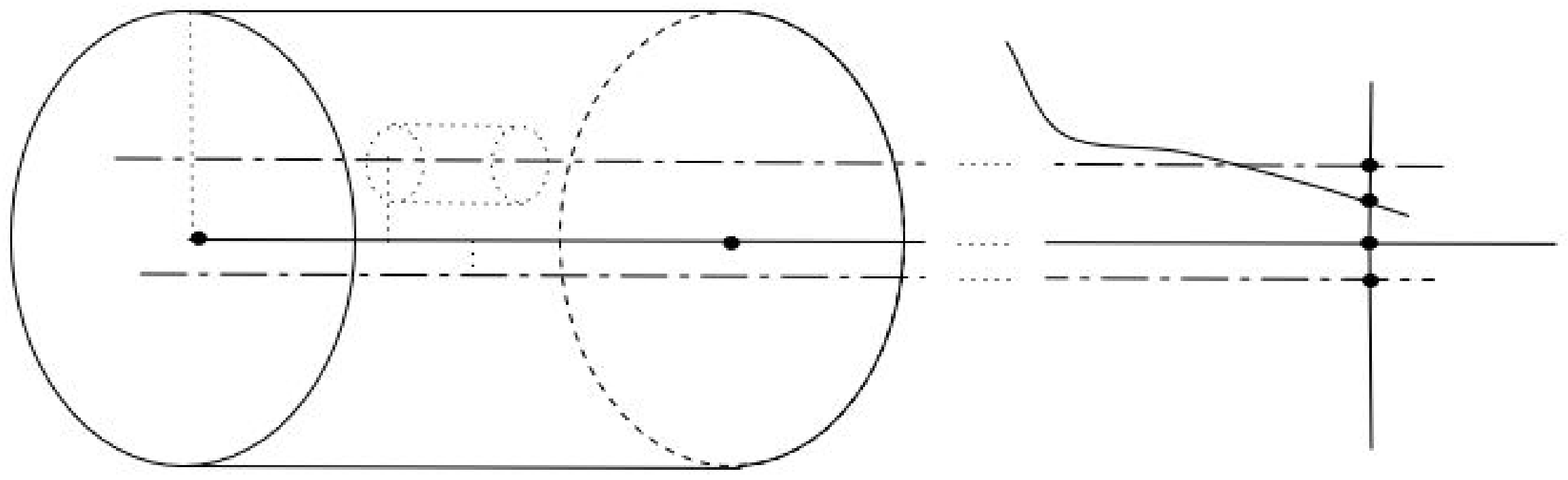}
\put(63,34){$\mathcal C_N$}\put(87,19){$q$}\put(87,8){$\gamma$}\put(7,36){${\rm Tub}_\eps(P_N)$}\put(92,24){$L$}\put(96,18){$N$}\put(67,16){$L$}\put(23.5,22){$\frac{\eps}{3}$}\put(31,18.5){$\delta$}\put(24,28.5){${\rm Tub}_\delta(P_L)$}
\end{overpic} 
\vspace{5mm}
\end{minipage}
So, $\inf\{d(p, \mathcal C_N) \mid p \in N\} > 0$ has to hold.
\end{proof}


\subsection{Holonomy of leaves}

In \cite{molino}, the notion of \mbox{\emph{leaf holonomy}} for Riemannian foliations is defined by sliding along the leaves (see Subsection \ref{ss:sliding} here) on the regular stratum $\Sigma_{\rm reg}:$

If $c$ is a loop in $x$ and if we choose the local transversals $T = T',$ then $h_c$ is the germ of a local diffeomorphism with fixed point $x.$ The composition of two loops $c_1, c_2$ in $x$ is  $h_{c_1c_2} = h_{c_1}\circ h_{c_2}.$ Thus, we obtain a group homomorphism $$h_x : \pi_1(L, x) \to {\rm Diff}_x(T),$$
whose image is the \emph{holonomy group of $L$ in $x$.} By identification of a local transversal with the quotient $\bar U,$ the holonomy group is independent of $T.$

A regular leaf is called \emph{exceptional}, if its holonomy is non-trivial. Furthermore, we call a regular leaf $N$ \emph{specificly regular}, if there is a $p$ in $N$ and a local tubular neighborhood around $p$ wherein every meeting leaf has exactly one plaque. Note that only under the condition of being properly embedded, leaves with trivial holonomy are just the same as specificly regular leaves as the following lemma will show. Thus, we denote this stratum of leaves by $\Sigma_{\rm triv}.$
 
\begin{lemma}\label{lemhol}
Let $(M, \mathcal F, g)$ be singular Riemannian with properly embedded leaves. Then, a regular leaf $N$ has non-trivial holonomy in $x$ if and only if for every global tubular neighborhood of $N$ there exists a leaf meeting the associated slice $S(x)$ more than once, i.e. if and only if $N$ is notspecificly regular.
\end{lemma}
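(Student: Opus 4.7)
For the forward implication, assume $N$ has non-trivial holonomy at $x$ and pick a loop $c$ in $N$ at $x$ whose holonomy germ $h_c$ is not the identity on a small transversal. By germ non-triviality, there exist points $y$ arbitrarily close to $x$ with $h_c(y)\neq y$, and the sliding construction of Subsection~\ref{ss:sliding} places $h_c(y)$ in the same leaf as $y$. Given any global tube ${\rm Tub}_\eps(N)$ from Theorem~\ref{theotube}, I choose $y\in S_\eps(x)$ with $d(y,x)$ small enough so that the entire sliding stays inside ${\rm Tub}_\eps(N)$; by Lemma~\ref{lemlocaltube}(ii), sliding preserves the distance to $N$, so $h_c(y)\in S_\eps(x)$ as well. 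Thus $L_y$ meets the slice at two distinct points.

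For the reverse implication, fix a sufficiently small $\eps>0$ and apply the hypothesis to get a leaf $L$ with distinct points $y_1,y_2\in L\cap S_\eps(x)$. Let $P_1$ be the plaque of $L\cap{\rm Tub}_\eps(N)$ containing $y_1$. By Lemma~\ref{lemlocaltube}(i) the restriction $\pi|_{P_1}\colon P_1\to N$ is a surjective submersion, hence a local diffeomorphism since $L$ and $N$ are regular of equal dimension, and, using properness of the embedding together with Theorem~\ref{theotube}, a covering map. If $P_1\cap S_\eps(x)$ contains another preimage $y_1'$ of $x$, I connect $y_1,y_1'$ by a path $\beta$ in $P_1$ and project it to a loop $c=\pi\circ\beta$ in $N$ at $x$; by uniqueness of path-lifting for the covering, $\beta$ is precisely the lift of $c$ starting at $y_1$, so the sliding $h_c(y_1)=\beta(1)=y_1'\neq y_1$ witnesses non-trivial holonomy.

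The main obstacle is the remaining case, in which every plaque of $L$ covers $N$ with degree one and $y_2$ lies in a plaque $P_2$ disjoint from $P_1$. Here a direct sliding argument fails at $y_1$, because any lift of a loop in $N$ starting at $y_1$ stays in $P_1$ by connectedness of the lift and disjointness of the plaques, and therefore returns to $y_1$. To handle this case, I would appeal to the hypothesis that multi-hit persists for \emph{every} global tube radius, together with the fact that properly embedded leaves have only finitely many plaques in any bounded tube: by shrinking $\eps$ I expect either to force a same-plaque multi-hit in some smaller tube (reducing to the previous case) or to derive a contradiction from the incompatibility of a pure degree-one splitting with the multi-hit hypothesis persisting at all scales. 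Once this case is settled, both implications combine to yield the claimed equivalence with $N$ not being specifically regular.
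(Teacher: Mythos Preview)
Your forward implication is fine and matches the paper (which phrases it contrapositively). The issue is that your ``main obstacle'' is illusory. You are working in a \emph{global} tubular neighborhood ${\rm Tub}_\eps(N)$, and in that setting any leaf $L$ that touches the tube is entirely contained in it: by step~(i) of the proof of Theorem~\ref{theotube} the distance $d(\cdot,N)$ is globally constant along $L$, so $d(y_1,N)<\eps$ forces $L\subset{\rm Tub}_\eps(N)$. Consequently $L\cap{\rm Tub}_\eps(N)=L$ is connected and there is exactly one plaque, namely $L$ itself; the case ``$y_2$ lies in a plaque $P_2$ disjoint from $P_1$'' simply cannot occur. Your own same-plaque argument---connect $y_1$ to $y_2$ by a path $\beta$ inside $L$, project to the loop $c=\pi\circ\beta$ at $x$, and note that $\beta$ is the unique $\pi|_L$-lift of $c$ starting at $y_1$, whence $h_c(y_1)=y_2\neq y_1$---then already completes the reverse implication. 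This is exactly the content of the paper's one-line argument (``sliding along the leaf $N$ of the distance $d(b_1,x)$ to $b_2$''). Incidentally, for unique path-lifting you only need that $\pi|_L$ is a local diffeomorphism between Hausdorff manifolds; invoking the full covering property is unnecessary.
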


To become acquainted with exceptional and specificly regular leaves, we state some easy facts about them previous to the proof of the lemma.
 
\begin{remark}
\begin{trivlist}
\item{(i)} For a leaf $L$, the number of components of the intersection $L \cap S(x), x \in N,$ is independent of $x \in N.$ \setlength\labelwidth{-0,3cm}\item{(ii)} The holonomy of a leaf is non-trivial in every point of the leaf if it is non-trivial in an arbitrary point.
\item{(iii)} For specificly regular leaves, the number of components of the intersection as stated in (i) is constant along connected components of $\Sigma_{\rm triv} \cap S(x).$
\item{(iv)} The plaques of a leaf $B$ lying in a sufficiently small, global tubular neighborhood of $N$ are lying in \emph{one} tube around $N$ because of globally constant distance.
\end{trivlist}
\end{remark}

\begin{proof} The first item results from Theorem \ref{theotube} and locally constant distances of leaves, the second is a direct conclusion of the first. To see the third item, we consider a tubular neighborhood of a specificly regular leaf $B$ so small that it is contained completely in the global tubular neighborhood of $N.$ Now, combine the local description of a Riemannian foliation with sliding along $B$ to see that the third property is open on $\Sigma_{\rm triv} \cap S(x).$ Closedness follows since the complement is open.  ``Sufficiently small'' in the fourth item is easily seen to be less than one half of the radius of the global tubular neighborhood (cf. argument (ii) in the proof of theorem \ref{theotube}).
\end{proof}

\noindent \emph{Proof of Lemma \ref{lemhol}.}
``$\Rightarrow$''\ Assume that there is a tubular neighborhood with slice $S(x)$ of $x$ such that every meeting leaf has exactly one plaque in it. As the leaves have the same dimension as $N,$ the intersection of a leaf with the slice is one single point, and sliding along the leaves just produces the identity map.

``$\Leftarrow$''\ Consider a leaf $N$ with global tubular neighborhood of radius, say, $\eps;$ and let $x$ be a point of $N.$ Further, let $B$ be a leaf with at least two intersection points with the slice $S_{\eps}(x),$ denote these points by $b_1$ and $b_2.$ Sliding along the leaf $N$ of the distance  $d(b_1, x)$ to $b_2$ therefore defines a non-trivial element in the holonomy group of $x$ in $N.$ As $B$ was arbitrary, the claim follows. 
\qed\\

Now, we will show that the exceptional leaves are not as bad as they may seem at a first glance: the stratum $\Sigma_{\rm triv}$ of leaves of trivial holonomy (i.e. specificly regular leaves) is open, connected, and dense in $M;$ beyond, the stratum of exceptional leaves $\Sigma_{\rm ex} = \Sigma_{\rm reg} \setminus \Sigma_{\rm triv}$ owns -- at least locally -- a submanifold structure.

\begin{proposition}\label{propregdense} Let $(M, \mathcal F)$ be a singular Riemannian foliation with properly embedded leaves. Then, the set of specificly regular leaves is open and dense in $M$.
\end{proposition}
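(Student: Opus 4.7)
The plan is to prove openness and density in two separate steps, both times using the global tubular neighborhoods of Theorem \ref{theotube} and the holonomy criterion for specific regularity in Lemma \ref{lemhol}.

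For the density assertion, since $\Sigma_{\rm reg}$ is already open and dense in $M$, I only need to show $\Sigma_{\rm triv}$ is dense in $\Sigma_{\rm reg}$. Given a regular $p$, I would choose a global tube $T = {\rm Tub}_\eps(N_p) \subset \Sigma_{\rm reg}$ by Theorem \ref{theotube}, with slice $S(p)$. The adapted metric condition (II) makes sliding along leaves act on $S(p)$ by local isometries of the transverse Riemannian metric, and the isotropy of this pseudogroup at $s \in S(p)$ coincides with the holonomy of the leaf through $s$ at $s$. Any non-identity element of this pseudogroup has non-trivial $1$-jet at its fixed point (else uniqueness of isometries from their $1$-jet would force it to be the identity), and hence has fixed-point set a proper submanifold of $S(p)$. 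The union of these fixed-point sets misses a dense subset of $S(p)$, whose leaves are all specifically regular by Lemma \ref{lemhol}; this produces specifically regular leaves arbitrarily close to $p$.

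For openness, fix $p \in \Sigma_{\rm triv}$. Using Lemma \ref{lemhol} together with Theorem \ref{theotube}, I would pick a global tube $T = {\rm Tub}_\eps(N_p) \subset \Sigma_{\rm reg}$ in which every leaf contributes exactly one plaque. For any $q \in T$, I want to exhibit a local tube around some point of $N_q$ in which every meeting leaf contributes one plaque. The key observation is that for any leaf $L$ meeting $T$, the unique plaque $Q := L \cap T$ projects via $\pi$ to $N_p$ as a diffeomorphism: $\pi|_Q$ is a surjective submersion by Lemma \ref{lemlocaltube}(i), $\dim Q = \dim N_p$ makes it a local diffeomorphism, and the one-plaque property renders it injective. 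Assembling these diffeomorphisms yields a foliated trivialisation of $T$ over $N_p$ with fibre $S(p)$; any sufficiently small local tube at $q$ then corresponds to a product patch in which every leaf meeting the tube does so in one connected piece, witnessing $N_q \in \Sigma_{\rm triv}$.

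The hardest part will be the openness step, specifically passing from the plaquewise diffeomorphisms $\pi|_Q$ to a genuine foliated product structure on $T$ and then verifying that a small local tube at $q \in T$ inherits the one-plaque property in the ambient (not product) metric. This should follow by combining Lemma \ref{lemlocaltube}(ii) on locally constant distances, the sliding construction in Subsection \ref{ss:sliding} to identify nearby leaves with $N_p$ smoothly, and a smallness/compactness argument analogous to the one in step (ii) of the proof of Theorem \ref{theotube}.
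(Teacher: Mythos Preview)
Your proposal is correct in both parts, but the density argument takes a genuinely different route from the paper's.

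For openness, the paper disposes of the matter in one line (``By Lemma~\ref{lemhol}, the first claim follows''), and your trivialisation argument is exactly the content one needs to unpack that line: once every leaf in the global tube $T$ of $N_p$ has a single plaque, the projection $\pi$ restricted to each plaque is a bijective submersion between equidimensional manifolds, hence a diffeomorphism, and $T$ is a foliated product $N_p\times S(p)$. In fact this yields slightly more than you claim: each $N_q\cap T$ is then a copy of $N_p$ at constant distance from $N_p$, hence closed in $M$, hence open and closed in the connected leaf $N_q$; so $N_q\subset T$ entirely, and triviality of its holonomy is immediate from the product structure. This makes the ``hardest part'' you flagged essentially disappear.

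For density, the paper argues by an explicit finite iteration: starting from an exceptional leaf $N$, one finds a leaf $B$ hitting the slice $b>1$ times; if $B$ is still exceptional one finds $C$ hitting $bc$ times, and so on. The successive intersection points are forced by distance constraints into nested spheres of strictly decreasing dimension inside the $(n-k-1)$-sphere, so the process terminates in at most $n-k-1$ steps with a specifically regular leaf. Your argument instead invokes the classical principle that for a Riemannian foliation the holonomy pseudogroup acts by local isometries on the transversal, so each nontrivial element has a proper totally geodesic fixed set, and generic points have trivial isotropy. This is cleaner and situates the result in standard foliation theory (it is essentially the Epstein--Millett--Tischler theorem sharpened by the transverse metric), but you should make explicit why the union of fixed-point sets is not all of $S(p)$: the holonomy pseudogroup is countably generated (second countability of $M$), so Baire category applies. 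The paper's hands-on iteration, by contrast, is self-contained and---more importantly for what follows---its mechanism of taking thirds of tubes and tracking intersection multiplicities is exactly what is recycled in the proof of Lemma~\ref{lemacctype} to stratify the exceptional set by leaf type.
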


\begin{proof}
The stratum of regular leaves $\Sigma_{\rm reg}$ is open and dense in $M.$ Therefore, it is sufficient to show that $\Sigma_{\rm triv}$ is open and dense in $\Sigma_{\rm reg}.$ By Lemma \ref{lemhol}, the first claim follows.

Let $N$ be exceptional, $x \in N$ and $\eps_0 > 0$ such that ${\rm Tub}_{\eps_0}(N)$ is a global tubular neighborhood of $N.$ Let $\eps < \eps_0$ be arbitrary. If every leaf intersected the $\frac{\eps}{3}$-slice of $x \in N$ only once, $N$ would not be exceptional, so we can find a leaf $B$ intersecting $b$ times. Assume that $B$ is not specificly regular, then iterate the previous consideration as follows:

All the points $\{B_1, \ldots, B_b\} = B \cap S_{\frac{\eps}{3}}(x)$ lie in a normal distance sphere of $N$ around $x.$ Around each of these points choose local tubular neighborhoods $\{{\rm Tub}_\delta(B_i)\}$ with fixed radius $\delta$ in such a way that they do not intersect, they lie within the $\frac{\eps}{3}$-neighborhood of $N,$ and that each contains just one $B_i.$ Again, have a look at the neighborhoods of one third of this radius. As $B$ is exceptional, there is a leaf $C$ meeting each neighborhood several times and therefore meeting ${\rm Tub}_{\frac{\delta}{3}}(B_i) \cap S_{\frac{\eps}{3}}(x)$ more than once, say $c$ times. Thus, there are $bc$ different points of $C$ in the $\frac{\eps}{3}$-slice of $x,$ and
\begin{trivlist}
\item{(i)} the points $C_{i1}, \ldots, C_{ic}$ are at the same distance from $B_i,$ 
\item{(ii)} all the $bc$ points $C_{11}, \ldots, C_{bc}$ lie on a distance sphere of fixed radius around $x.$
\end{trivlist} Let $n$ be the dimension of $M$ and $k$ the dimension of the regular leaves. Then, the dimension of a normal distance sphere of a point in $\Sigma_{\rm reg}$ is equal to $(n-k)-1.$  The different points of $C \cap S_{\eps}(x)$ live therefore in an -- at most -- $(n-k-2)$-dimensional set. Thus, the iteration argument has to end after $n-k-1$ steps which means finding a specificly regular leaf.

The leaf $N,$ the point $x,$ and the radius $\eps$ were chosen arbitrarly, so the claim follows.
\end{proof}

To prove that $\Sigma_{\rm triv}$ is connected, we will use the structure of the exceptional stratum, which can be stated as follows:

\begin{lemma}\label{lemexmfd}
Each connected component of $\Sigma_{\rm ex}$ is a submanifold of $\Sigma_{\rm reg}$ with smooth (relative) interior, namely either being an isolated exceptional leaf or locally being the pre-image of totally geodesic, convex sets of the (local) quotient space.
\end{lemma}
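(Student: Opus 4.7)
The approach is to model $\Sigma_{\rm ex}$ locally as the fixed-point structure of a finite isometric action on the local leaf quotient, and then invoke the classical theorem of Kobayashi on fixed-point sets of isometries.

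Take $x \in N \subset \Sigma_{\rm ex}$, choose a simple neighborhood $U$ of $x$ with quotient $(\bar U, g_{\bar U})$, and shrink so that a tubular neighborhood ${\rm Tub}_\eps(P) \subset U$ of a plaque $P \ni x$ fits inside. Each loop $c$ in $N$ at $x$ induces a sliding diffeomorphism on the slice $S_\eps(x)$, and under the identification of $S_\eps(x)$ with an open subset of $\bar U$ via $\pi_U$ these become germs of local isometries of $\bar U$ fixing $\pi_U(x)$. Since $N$ is properly embedded and the tube is small, every nearby leaf meets ${\rm Tub}_\eps(P)$ in only finitely many plaques, so after possibly shrinking $\eps$ the holonomy group $H$ of $N$ at $x$ is finite and extends to an honest finite group of isometries on a neighborhood of $\pi_U(x)$ in $\bar U$.

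The central step is the identification
\[
  \pi_U(\Sigma_{\rm ex} \cap U) \;=\; \bigcup_{h \ne 1} \mathrm{Fix}(h) \;\subset\; \bar U.
\]
If $h \in H$ is non-trivial and fixes $\bar y = \pi_U(y)$, then $h$ restricts to a non-trivial germ of a local diffeomorphism of a transversal to $N_y$ at $y$, hence $N_y$ has non-trivial holonomy and is exceptional. For the converse, if the orbit $H \bar y$ is free then by Lemma \ref{lemlocaltube}(i) the plaques of $N_y$ in ${\rm Tub}_\eps(P)$ are in bijection with $H \bar y$; choosing a neighborhood of $\bar y$ with pairwise disjoint $H$-translates and forming the corresponding tube around the plaque of $N_y$ through $y$, every meeting leaf has exactly one plaque inside, so $N_y$ is specifically regular by Lemma \ref{lemhol}.

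By Kobayashi's theorem, the fixed point set of each isometry of a Riemannian manifold is a disjoint union of totally geodesic closed submanifolds, hence so is each connected component of $\bigcup_{h \ne 1} \mathrm{Fix}(h)$. Shrinking the neighborhood of $\pi_U(x)$ in $\bar U$ to a convex geodesic ball turns each such submanifold into a totally geodesic convex subset, and pulling back via $\pi_U$ gives the claimed local description of $\Sigma_{\rm ex}$ as the preimage of totally geodesic convex sets in $\bar U$. The isolated alternative of the lemma arises exactly when each non-trivial element of $H$ has $\pi_U(x)$ as its only fixed point in this neighborhood, in which case the connected component of $\Sigma_{\rm ex}$ through $N$ is locally just $N$ itself. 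The main obstacle I expect is the converse of the characterization: deducing that $N_y$ is specifically regular from the sole freeness of the orbit $H\bar y$ requires carefully identifying the plaques of $N_y$ in the tube with $H \bar y$ via the sliding action --- which uses Lemma \ref{lemlocaltube}(i) together with the finiteness of plaques guaranteed by proper embeddedness --- and only then does the orbit disjointness yield specific regularity of the \emph{different} leaf $N_y$.
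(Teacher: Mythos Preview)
Your approach is genuinely different from the paper's. The paper argues geometrically: given an exceptional leaf $A$ in a global tube of a leaf $N$ to which exceptional leaves accumulate, it shows by a sliding argument that every leaf on the minimal geodesic strip from $A$ to $N$ is also exceptional, concluding that $\pi_U(\Sigma_{\rm ex}\cap U)$ is locally convex in $\bar U$ and citing Cheeger--Ebin for smoothness of the interior of a convex set. You instead realize the exceptional locus as the non-free locus of the finite holonomy action on the transversal and invoke Kobayashi's fixed-point theorem. Your route is more structural and immediately suggests a stratification by isotropy type; the paper's route is self-contained and avoids having to relate the holonomy of $N$ to that of nearby leaves.

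Two steps need more care than you give. First, the identification $\pi_U(\Sigma_{\rm ex}\cap U)=\bigcup_{h\ne 1}\mathrm{Fix}(h)$ really rests on showing that the holonomy group of $N_y$ at $y$ equals $\mathrm{Stab}_H(\bar y)$: for both inclusions you need that a loop in $N$ whose sliding fixes $\bar y$ lifts to a loop in $N_y$ whose \emph{own} holonomy germ at $\bar y$ coincides with $h$, and conversely that every holonomy element of $N_y$ arises this way. This requires the global tube of Theorem~\ref{theotube}, so that $\pi_N|_{N_y}:N_y\to N$ is a finite covering and path-lifting behaves; Lemma~\ref{lemlocaltube}(i) alone concerns a single plaque and does not give transitivity of $H$ on $N_y\cap S_\eps(x)$. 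Finiteness of $H$ likewise needs more than ``finitely many plaques'': argue that holonomy germs are isometries, hence linearize into $O(\nu_xN)$, and that every orbit on the slice is finite by proper embeddedness, forcing the closure of $H$ in $O(\nu_xN)$ to be zero-dimensional. Second, your inference ``hence so is each connected component of $\bigcup_{h\ne 1}\mathrm{Fix}(h)$'' fails as stated: for $H=(\Z/2)^2$ acting on $\R^2$ by the two axis reflections the union is two lines through the origin, connected but not a submanifold there. The paper's phrasing (``convex \emph{sets}'', plural) accommodates exactly this --- neither argument produces a single submanifold at such crossing points, only a finite union of totally geodesic convex pieces.
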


\begin{remark} The following proof does not give any information about smoothness of the boundaries of $\Sigma_{\rm ex}$ in $\Sigma_{\rm reg}$ resp.~in $M.$ It will follow by Lemma \ref{lemacc} that every component actually has no boundary (this fact is not used in the meantime).
\end{remark}

\begin{proof} Assume that there is a leaf $N$, to which exceptional leaves accumulate. It is a direct consequence that $N$ itself is exceptional or singular. Let ${\rm Tub}_\eps(N)$ be a global tubular neighborhood and let $A$ be exceptional, lying within this neighborhood. Then there exists a horizontal geodesic realising the distance between $A$ and $N.$ By globally constant distance of leaves and by connectedness, we conclude that there is a differentiable strip between the two leaves consisting of minimal geodesics $\{\gamma_a\}_{a \in A}$ of length $l$. If $B$ is another leaf meeting one of these geodesics at time $t,$ then $B$ meets every such geodesic at time $t$. This follows by an open-closed argument. To see that $B$ lies totally within this strip, we argue as follows:

Take the distance from $p \in B$ to $N$ as well as the distance from $p$ to $A$ and look at the associated broken geodesic from $\pi_N(p) \in N$ to $\pi_A(p) \in A$. Piecewise, it is of the same length as the geodesics of the strip, so either it must be unbroken (and thus contained in $\{\gamma_a\}_{a \in A}$) or the distance from $\pi_N(p) \in N$ to $\pi_A(p) \in A$ must be less than $l$, which is impossible.

Lemma \ref{lemposhom} tells us that $B$ has the same dimension as $A$. Let ${\rm Tub}_\delta(A)$ be a global tubular neighborhood of $A$ and specify $B$ to be a leaf lying within this neighborhood. Choose $a \in A$ arbitrarily and $b \in B$ lying on the unique minimal geodesic from $a$ to $N$ (see the figure below). Within a slice through $a,$ the point $b$ is the only one belonging to $B$ because the distance spheres $S_{d(a,b)}(a)$ and $S_{d(b, N)}(\gamma_a(l))$ intersect only in $b.$ Because $A$ is exceptional, for every neighborhood $U(b) \subset {\rm Tub}_\delta(A)$ of $b$ there is a specificly regular leaf which intersects the slice $S_\delta(a)$ more than once.

\begin{minipage}[t]{1\textwidth}
\vspace{1mm}
\begin{center}
\begin{overpic}[width=6cm,
]{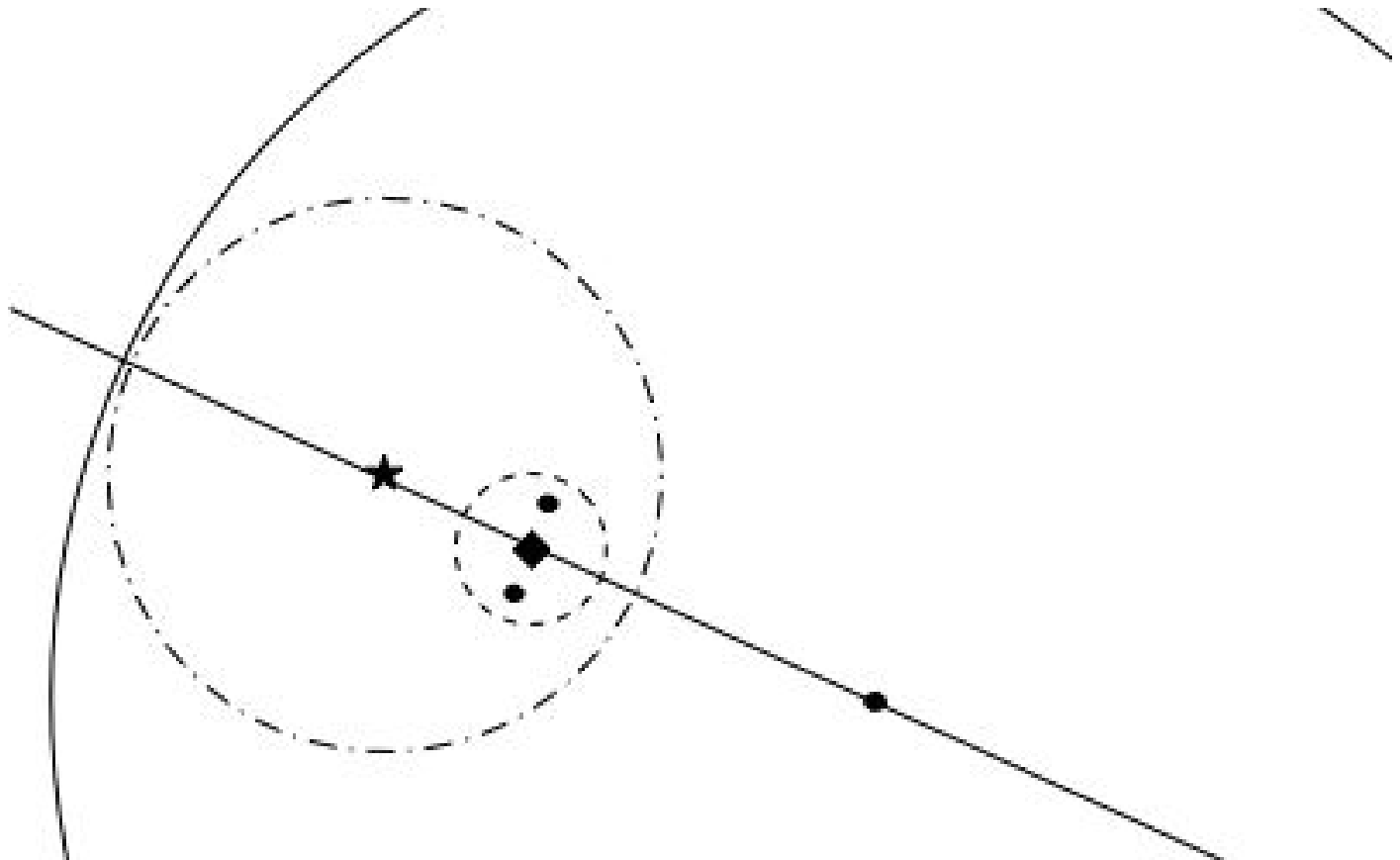}
\put(24,23){$a$}\put(39,17){$b$}\put(66,12){$\gamma_a(l)$}\put(60,50){${S_\eps(\gamma_a(l))\subset\rm Tub}_\eps(N)$}\put(21,39){$S_\delta(A)$}\put(45,22){$U(b)$}
\end{overpic} 
\end{center}
\vspace{3mm}
\end{minipage}
But locally constant distance forces this leaf to meet $U(b) \cap S_\delta(a)$ several times, elsewise there would be more points of $B$ in $S_\delta(a).$ By definition, $B$ must be exceptional, too.

This consideration holds for every leaf $B \subset \bigcup_{a \in A} \gamma_a \cap {\rm Tub}_\delta(A).$ Within $\Sigma_{\rm reg},$ the stratum $\Sigma_{\rm ex}$ is closed. Thus, a boundary leaf $B \in \bigcup_{a \in A} \gamma_a \cap \overline{{\rm Tub}_\delta(A)}$ also is exceptional. Assigning the previous part of $A$ to this boundary leaf $B$ and repeating the proof yields that all leaves of the strip $\bigcup_{a \in A} \gamma_a$ between  $N$ and $A$ are exceptional.

In the local quotient space $\bar U,$ minimal geodesics connecting arbitrary points of  $\pi_U(\Sigma_{\rm ex} \cap U)$ thus lie completely in $\pi_U(\Sigma_{\rm ex} \cap U).$ This set is locally convex. Therefore, its interior is well-defined and can be endowed with a smooth submanifold structure, see p.~139 of \cite{cheeger}. Hence, the preimage $\Sigma_{\rm ex} \cap U$ of the submersion $\pi_U$ is smooth in its interior, too.
\end{proof}

\begin{proposition}\label{propregcon}
The set $\Sigma_{\rm triv}$ of leaves with trivial holonomy is connected.
\end{proposition}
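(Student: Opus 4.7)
The plan is to show path-connectedness, which is equivalent to connectedness since $\Sigma_{\rm triv}$ is open in $M$ (Lemma \ref{lemhol}) and hence locally path-connected. Given $x,y\in\Sigma_{\rm triv}$, I would first join them by a continuous path $\gamma:[0,1]\to\Sigma_{\rm reg}$ using that the open set $\Sigma_{\rm reg}$ is connected, and then deform $\gamma$ inside $\Sigma_{\rm reg}$ so that it never meets $\Sigma_{\rm ex}$. The deformation uses the local description of $\Sigma_{\rm ex}$ from Lemma \ref{lemexmfd} together with density of $\Sigma_{\rm triv}$ in $\Sigma_{\rm reg}$ (Proposition \ref{propregdense}).

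Concretely, I would cover the compact image $\gamma([0,1])$ by finitely many connected simple neighborhoods $U_1,\ldots,U_m$ with consecutive overlaps, and use density of $\Sigma_{\rm triv}$ to select intermediate bridge points $p_0=x$, $p_m=y$ and $p_i\in U_i\cap U_{i+1}\cap\Sigma_{\rm triv}$. It then suffices to path-connect $p_{i-1}$ to $p_i$ within $\Sigma_{\rm triv}\cap U_i$ for each $i$. In each $U_i$ the submersion $\pi_i:U_i\to\bar U_i$ carries $\Sigma_{\rm ex}\cap U_i$, by Lemma \ref{lemexmfd}, to a union of totally geodesic convex subsets of the connected quotient manifold $\bar U_i$; combined with the dimension-reduction argument in the proof of Proposition \ref{propregdense} (each iteration drops dimension in the normal sphere by at least one), each such subset has dimension at most $n-k-2$, i.e.\ codimension at least two in $\bar U_i$.

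Since a closed subset of topological codimension at least two in a connected manifold does not disconnect it, I can choose a path in $\bar U_i$ from $\pi_i(p_{i-1})$ to $\pi_i(p_i)$ lying in $\pi_i(\Sigma_{\rm triv}\cap U_i)$. Lifting this path through $\pi_i$ via a local smooth section gives a continuous path inside $\Sigma_{\rm triv}\cap U_i$ — membership in $\Sigma_{\rm triv}$ is a property of the leaf, so any lift of a path avoiding $\pi_i(\Sigma_{\rm ex}\cap U_i)$ automatically lies in $\Sigma_{\rm triv}$. If the lifted endpoints do not coincide with $p_{i-1}$ and $p_i$, I adjust by concatenating with paths along the connected plaques of $p_{i-1}$ and $p_i$, which remain in $\Sigma_{\rm triv}$. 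Concatenating over $i$ yields a continuous path in $\Sigma_{\rm triv}$ from $x$ to $y$.

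The main obstacle I anticipate is establishing the codimension-two bound for $\pi_i(\Sigma_{\rm ex}\cap U_i)$ in $\bar U_i$. Lemma \ref{lemexmfd} only provides smooth structure on the relative interior of each component without explicitly pinning down its dimension, and a codimension-one exceptional stratum in the local quotient — structurally analogous to the core of a M\"obius strip — would disconnect $\bar U_i$ and break the perturbation argument. In that degenerate case one would need to replace the quotient step by a global argument that goes around the exceptional leaf through its global tubular neighborhood (Theorem \ref{theotube}), exploiting that the finite holonomy identifies opposite sides of the codimension-one set. Once the quotient set really has codimension at least two in each $\bar U_i$, however, the local-to-global concatenation is routine.
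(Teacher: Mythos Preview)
Your codimension-two claim is not merely a potential obstacle; it is actually false, and the M\"obius example you yourself cite shows this: there $n-k=1$, so $\bar U$ is an interval and $\pi(\Sigma_{\rm ex})$ is a single interior point of codimension one. The dimension drop in the proof of Proposition~\ref{propregdense} bounds how many nested exceptional leaves you can stack inside a slice, not the dimension of $\Sigma_{\rm ex}$ itself. Consequently your ``routine'' local-to-global step never gets off the ground, and the entire content of the proposition lives in what you call the degenerate case.

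Your sketch for that case is the right idea, and the paper's proof is precisely a clean execution of it. The paper argues by contradiction: if $\Sigma_{\rm triv}$ is disconnected, a separating component $\mathcal Z\subset\Sigma_{\rm ex}$ must have codimension one in $M$. Now take a global tubular neighborhood ${\rm Tub}_\eps(L_p)$ of a leaf $L_p\subset\mathcal Z$ and a nearby specificly regular leaf $N$; by exceptionality of $L_p$ (Lemma~\ref{lemhol}) the leaf $N$ meets the slice $S_\eps(p)$ in at least two points $x,y$. Since $\mathcal Z$ has codimension one, these two points lie on opposite sides of $\mathcal Z$ in the tube; but $N$ is connected, so a path in $N$ from $x$ to $y$ joins the two sides inside $\Sigma_{\rm triv}$, contradicting separation. (The case where a single exceptional leaf already has dimension $n-1$ is handled by the same mechanism with the one-dimensional normal fiber.) So the paper skips your quotient-avoidance machinery entirely and goes straight to the holonomy-connects-sides argument you outlined; you should do the same and drop the codimension-two route.
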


\begin{proof} Assume $\Sigma_{\rm reg} \setminus \Sigma_{\rm ex}$ to be not connected. The component $\mathcal Z \subset \Sigma_{\rm ex}$ that separates the stratum $\Sigma_{\rm triv}$ has to be $(n-1)$-dimensional and without boundary. Hence, the fibers of the normal bundle of $\mathcal Z$ are one-dimensional.

First case: An exceptional leaf $N$ and thus all regular leaves are of dimension $n-1$. Within every local tubular neighborhood (sufficiently small) of  $p \in N$, there is -- by lemma \ref{lemhol} -- a regular leaf having several plaques within a corresponding tube. If $N$ separates the specificly regular stratum, $N$ separates the tube. Transversally, meaning within the quotient, the tube consists of only two points, which then have to belong to different components of $\Sigma_{\rm triv}.$ But both belong to the same leaf which is connected, a contradiction.

Second case: The component $\mathcal Z$ is a union of exceptional leaves. Let $p \in L_p \subset \mathcal Z$ and ${\rm Tub}_\eps(L_p)$ be a global tubular neighborhood of $L_p.$ By assumption, ${\rm Tub}_\eps(L_p) \cap \Sigma_{\rm triv}$ is separated by $\mathcal Z.$ Beyond, there is a leaf $N \subset \Sigma_{\rm triv}$ arbitrarily close to $p$ which intersects $S_\eps(p)$ several times. Choose $x,y \in N \cap S_\eps(p),$ and a path $c: [0,1] \to N$ from $x = c(0)$ to $y = c(1).$ At every point $c(t), t \in [0,1],$ take the distance $d(c(t), \mathcal Z).$ This is constant along $c$, hence $d(x, \mathcal Z) = d(y, \mathcal Z).$ As $\mathcal Z$ is a $(n-1)$-dimensional submanifold of $\Sigma_{\rm reg},$ the points $x$ and $y$ have to be in different components of ${\rm Tub}_\eps(L_p) \cap \Sigma_{\rm triv}.$ But again, both belong to the same leaf, a contradiction.
\end{proof}

By remark (iii) coupled with Propositions \ref{propregdense} and \ref{propregcon} we conclude:

\begin{corollary}\label{cortype}
In a global tubular neighborhood of an exceptional leaf, all specificly regular leaves intersect a slice equally often (and more than once).\qed
\end{corollary}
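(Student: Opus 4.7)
The plan is to treat the two assertions of the corollary---constancy of the intersection count and the lower bound $>1$---separately, fixing once and for all an exceptional leaf $N$, a global tubular neighborhood ${\rm Tub}_\eps(N)$ (Theorem \ref{theotube}), a point $x \in N$, and the slice $S(x) = \pi^{-1}(x)$. For every specifically regular leaf $L$ with plaques in ${\rm Tub}_\eps(N)$, write $k(L) := |L \cap S(x)|$, which is a finite positive integer by proper embedding and Lemma \ref{lemlocaltube}.

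For constancy of $k$ I would first observe that remark (iii) already gives constancy of $k(L_y)$ as $y$ varies within a single connected component of $\Sigma_{\rm triv} \cap S(x)$, so the task reduces to equating $k$ on different components. Here I would pick $y_1,y_2$ in two components of $\Sigma_{\rm triv} \cap S(x)$, use Proposition \ref{propregcon} to obtain a path $c$ joining them in $\Sigma_{\rm triv}$, and cover $c$ by simple neighborhoods lying in ${\rm Tub}_\eps(N)$. Within each simple neighborhood the plaques of consecutive specifically regular leaves along $c$ correspond via sliding along the leaves (Subsection \ref{ss:sliding}), while globally constant distance---step (i) in the proof of Theorem \ref{theotube}---prevents any plaque of a neighboring leaf from leaving or entering ${\rm Tub}_\eps(N)$ along the path. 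Chaining the local bijections then gives $k(L_{y_1}) = k(L_{y_2})$.

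For the bound $k(L) > 1$, I would apply Lemma \ref{lemhol}: because $N$ is exceptional, some leaf $B \subset {\rm Tub}_\eps(N)$ meets $S(x)$ at two distinct points $b_1, b_2$. Proposition \ref{propregdense} supplies a specifically regular leaf $L$ arbitrarily close to $B$, and sliding along $B$ with the locally constant distance of Lemma \ref{lemlocaltube}(ii) produces for $L$ a second intersection point with $S(x)$ close to the ``other'' $b_i$; so $k(L) \ge 2$ for this particular $L$, and the constancy established above upgrades this to every specifically regular leaf meeting ${\rm Tub}_\eps(N)$.

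The main obstacle will be the transport step, since $\Sigma_{\rm triv} \cap S(x)$ may genuinely be disconnected (as when $\Sigma_{\rm ex}$ has real codimension one in $\Sigma_{\rm reg}$) and the path $c$ from Proposition \ref{propregcon} is not a priori confined to ${\rm Tub}_\eps(N)$. Making this rigorous should use the submanifold structure of $\Sigma_{\rm ex}$ given by Lemma \ref{lemexmfd} together with the homothetical transformations from Lemma \ref{lemposhom}, which preserve specifically regular leaves and can be used to contract $c$ into ${\rm Tub}_\eps(N)$ without leaving $\Sigma_{\rm triv}$, so that the plaque correspondence is maintained throughout.
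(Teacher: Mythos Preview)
Your strategy matches the paper's: the paper's proof is the single sentence ``By remark~(iii) coupled with Propositions~\ref{propregdense} and~\ref{propregcon},'' and your proposal is a faithful unpacking of that citation, including the bound $k(L)>1$ via Lemma~\ref{lemhol}. You are also right to flag the subtlety that the connecting path $c$ furnished by Proposition~\ref{propregcon} need not remain inside ${\rm Tub}_\eps(N)$, a point the paper leaves implicit.

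Your proposed resolution of that subtlety, however, does not work as written: the homothetical transformations $h_\lambda$ of Lemma~\ref{lemposhom} are defined only on ${\rm Tub}_{\min(\eps,\eps/\lambda)}(N)$, so they cannot act on the portion of $c$ lying \emph{outside} the tube and hence cannot ``contract $c$ into ${\rm Tub}_\eps(N)$.'' A cleaner way to close the gap is to avoid the global path altogether. By step~(i) in the proof of Theorem~\ref{theotube}, the tubular neighborhood ${\rm Tub}_\eps(N)$ is saturated (a union of leaves) and connected; the arguments establishing Propositions~\ref{propregdense} and~\ref{propregcon} are purely local (they use only small tubular neighborhoods and the plaque structure), so they apply verbatim inside ${\rm Tub}_\eps(N)$ and give that $\Sigma_{\rm triv}\cap{\rm Tub}_\eps(N)$ is itself open, dense, and connected. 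Remark~(iii) then yields constancy of $k$ directly, without ever leaving the tube.
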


\subsection{The 
stratification of $\Sigma_{\rm ex}$ and $M$} The last subsection, particularly the last Corollary \ref{cortype}, leads to the following, well-defined notion:

\begin{definition}\label{deftype}
The \emph{type of an exceptional leaf $N_x$} is the natural number $$a_N := \# \{S_\eps(x) \cap L \mid L \subset \Sigma_{\rm triv} \cap  {\rm Tub}_\eps(N)\}.$$
\end{definition}

Combining the local description of a singular Riemannian foliation with the Tube Theorem \ref{theotube} and Corollary \ref{cortype} yields

\begin{lemma}\label{lemcovering}
Let $N$ be exceptional of type $a_N$ resp.~ specificly regular with global tubular neighborhood ${\rm Tub}_\eps(N).$ Then, the orthogonal projection $$\pi: {\rm Tub}_\eps(N) \to N$$ restricted to a leaf $L \subset {\rm Tub}_\eps(N)$ is an $\frac{a_N}{a_L}$-fold covering resp.~ a global diffeomorphism.\hfill \qed
\end{lemma}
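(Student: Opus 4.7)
The plan is to establish two facts about $\pi|_L\colon L\to N$: that it is a surjective local diffeomorphism, and that its fiber cardinality over every point of $N$ is constantly $a_N/a_L$ (respectively $1$ in the specifically regular case). Once these are in place, local triviality of $\pi$ over any plaque of $N$ delivers the covering.

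First I would note that $\dim L = \dim N$, which follows from the remark after Lemma~\ref{lemposhom} since $L$ lies in the tubular neighborhood of the regular leaf $N$ and is therefore itself regular. Then, fixing $x\in N$, I would pass to a local distinguished tubular neighborhood ${\rm Tub}_{\eps'}(P)\subset{\rm Tub}_\eps(N)$ of a plaque $P\subset N$ through $x$, with $P$ taken small enough that each plaque of $L$ inside meets each slice in at most one point. By Lemma~\ref{lemlocaltube}(i), each plaque $\tilde P$ of $L$ in ${\rm Tub}_{\eps'}(P)$ projects onto $P$ as a surjective submersion, which by equal dimensions is a surjective local diffeomorphism; transversality at a point of $\tilde P\cap S_{\eps'}(x)$ combined with connectedness of $P$ then upgrades $\pi|_{\tilde P}\colon\tilde P\to P$ to a diffeomorphism.

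Next I would split into the two cases. If $N$ is specifically regular, Lemma~\ref{lemhol} immediately gives $|L\cap S_\eps(x)|=1$, so there is a unique plaque of $L$ over $P$ and $\pi|_L$ is a bijective local diffeomorphism, hence a global diffeomorphism. If $N$ is exceptional of type $a_N$, I would count the fibers with the aid of a specifically regular leaf $K\subset{\rm Tub}_\eps(N)\cap{\rm Tub}_\delta(L)$, available by Proposition~\ref{propregdense} and Theorem~\ref{theotube} applied to $L$. Corollary~\ref{cortype} then yields $|K\cap S_{\eps'}(x)|=a_N$; the definition of $a_L$ should give that $K$ contributes exactly $a_L$ intersection points with $S_{\eps'}(x)$ near each of the $m:=|L\cap S_{\eps'}(x)|$ points where $L$ meets the slice. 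The resulting equation $a_N = m\cdot a_L$ then identifies $m=a_N/a_L$, and combining with the local diffeomorphism statement from the first step furnishes the $(a_N/a_L)$-fold covering.

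The hard part will be making this last count precise, because the integer $a_L$ is defined via slices of $L$ whereas $S_{\eps'}(x)$ is a slice of $N$. To bridge this I would use that for each $y\in L\cap S_{\eps'}(x)$ the connecting horizontal geodesic is perpendicular to both $N$ and $L$ by property~(II), so near $y$ the slice $S_{\eps'}(x)$ serves itself as a local transversal to $L$. The local product structure of the foliation around $y$ inside ${\rm Tub}_\delta(L)$, together with sliding along the leaves (Subsection~\ref{ss:sliding}), then identifies the intersection $K\cap S_{\eps'}(x)$ near $y$ with a slice count of $L$, delivering the claimed factor $a_L$.
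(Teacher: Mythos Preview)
The paper does not give a written proof of this lemma; it simply precedes the statement with the sentence ``Combining the local description of a singular Riemannian foliation with the Tube Theorem~\ref{theotube} and Corollary~\ref{cortype} yields'' and closes with a \qed. Your proposal is a correct and careful unpacking of exactly those ingredients: Lemma~\ref{lemlocaltube}(i) for the local diffeomorphism on plaques, Theorem~\ref{theotube} for the global tubular neighborhood, Remark~(i) preceding Lemma~\ref{lemhol} for the constancy of the fiber count along $N$, and Corollary~\ref{cortype} for the counting identity $a_N = m\cdot a_L$ via an auxiliary specifically regular leaf $K$. The one point you rightly flag as delicate---that the slice $S_{\eps'}(x)$ of $N$ serves as a local transversal to $L$ at each $y_i$, so that the count of $K$-points there agrees with the slice count defining $a_L$---is handled by the sliding-along-leaves mechanism of Subsection~\ref{ss:sliding}, exactly as you indicate; you should also make explicit that for $\delta$ small enough every point of $K\cap S_{\eps'}(x)$ lies near some $y_i$ (this follows from transversality of $S_{\eps'}(x)$ to $L$ and compactness of the intersection). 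In short, your argument is correct and is the intended proof.
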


A first step towards the stratification shall be the following lemma generalising the claim of Lemma \ref{lemexmfd} as announced. Let $c: (-\eps, \eps) \to M$ be a horizontal geodesic.

\begin{lemma}\label{lemacc}
If $c(0)$ is an accumulation point of both exceptional points and specificly regular points  along $c$, so $c(0)$ must be singular.
\end{lemma}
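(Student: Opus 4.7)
The plan is to argue by contradiction, assuming $c(0)\in\Sigma_{\rm reg}$ and ruling out both subcases. If $c(0)\in\Sigma_{\rm triv}$, Proposition \ref{propregdense} shows that $\Sigma_{\rm triv}$ is open in $M$, so a whole neighborhood of $c(0)$ lies in $\Sigma_{\rm triv}$, contradicting the accumulation of exceptional points along $c$.

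For the main subcase $c(0)\in\Sigma_{\rm ex}$, set $N=N_{c(0)}$ and work inside the global tubular neighborhood ${\rm Tub}_{\eps}(N)$ provided by Theorem \ref{theotube}; via $\exp^{\perp}$, identify the slice $S_{\eps}(c(0))$ with the normal disc $B_{\eps}(0)\subset\nu_{c(0)}N$. The holonomy group $H:=h_{c(0)}(\pi_1(N,c(0)))$ acts on the slice by sliding along loops in $N$, and by the standard linearization of holonomy for Riemannian foliations (cf.~\cite{molino}) this $\exp^{\perp}$-identification intertwines the action with a linear orthogonal representation of $H$ on $\nu_{c(0)}N$. Since the $H$-orbit of $y\in S_{\eps}(c(0))$ is precisely $N_y\cap S_{\eps}(c(0))$, Lemma \ref{lemcovering} gives $|H|=a_N$ (the stabilizer of a specifically regular $y$ is forced to be trivial: an element of it is an isometry of the slice acting as the identity on the sub-slice of $N_y$, hence everywhere) and more generally $|{\rm stab}_H(y)|=a_{N_y}$ for every regular $N_y$ in the tube.

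Now $c$ is horizontal with $c(0)\in N$, so $\dot c(0)\in\nu_{c(0)}N$ and $c(t)=\exp^{\perp}(t\dot c(0))$ for small $|t|$; in the linear identification $c(t)$ corresponds to the ray $t\dot c(0)$. Linearity of the $H$-action then forces ${\rm stab}_H(t\dot c(0))={\rm stab}_H(\dot c(0))={\rm stab}_H(-\dot c(0))$ for every $t\neq 0$, so $a_{N_{c(t)}}=|{\rm stab}_H(c(t))|$ is the same integer for all small $t\neq 0$, with no distinction between positive and negative $t$. The leaves $N_{c(t)}$, $t\neq 0$ small, are therefore either all specifically regular or all exceptional of one fixed type, and the simultaneous accumulation of both kinds at $c(0)$ becomes impossible. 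Hence $c(0)\in\Sigma_{\rm sing}$.

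The main obstacle is the symmetry between the two sides of $c(0)$. Using only the convexity of $\pi_U(\Sigma_{\rm ex}\cap U)$ from Lemma \ref{lemexmfd}, the geodesic $\pi_U\circ c$ in $\bar U$ is forced into this convex set on whichever side exceptional leaves accumulate, which settles same-sided or two-sided accumulations but leaves open the one-sided scenario (exceptional accumulating from one direction, specifically regular from the other). It is exactly the symmetry ${\rm stab}(v)={\rm stab}(-v)$ coming from the linear orthogonal structure of the holonomy representation in $\exp^{\perp}$-coordinates that rules out this scenario, and this is where the genuine work of the proof sits.
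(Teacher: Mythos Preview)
Your proof is correct and takes a genuinely different route from the paper's. The paper argues by a direct sliding/antipodal count: assuming $N_0=N_{c(0)}$ exceptional, it places a specifically regular $x=c(\eps/4)$ on one side and an exceptional $y=c(-\eps/4)$ on the other, slides the distance $d(x,y)=\eps/2$ to the $a_{N_0}$ slice-intersection points $\{x_i\}$ of $N_x$ to produce $a_{N_0}$ antipodal points $\{y_i\}\subset N_y$, and then, using that $N_y$ is exceptional, finds a specifically regular leaf meeting $S_\eps(c(0))$ at least $a_{N_y}a_{N_0}>a_{N_0}$ times, contradicting the definition of the type of $N_0$. Your argument instead identifies the holonomy group $H$ with a finite subgroup of $O(\nu_{c(0)}N)$ via the standard linearization of holonomy for Riemannian foliations, reads off $|{\rm stab}_H(y)|=a_{N_y}$ from orbit--stabilizer together with Corollary~\ref{cortype}/Lemma~\ref{lemcovering}, and then uses the trivial but decisive fact that a linear action has ${\rm stab}(tv)={\rm stab}(v)$ for all $t\neq 0$ to conclude that $a_{N_{c(t)}}$ is constant on a punctured neighbourhood of $0$. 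This is more structural and in fact proves more than Lemma~\ref{lemacc} (it already gives a local version of the constancy-of-type statement in Lemma~\ref{lemacctype}); the paper's argument, by contrast, is entirely self-contained within the sliding framework and avoids invoking the linearization from \cite{molino}. Two small remarks: your appeal to Lemma~\ref{lemcovering} is a forward reference in the paper's ordering, but there is no circularity since that lemma rests only on Theorem~\ref{theotube} and Corollary~\ref{cortype}; and your parenthetical justification that an element fixing a specifically regular $y$ is the identity deserves one more line---it fixes a dense set of points near $y$ (the specifically regular ones, each the unique nearby point of its leaf), hence an open set, hence has identity differential at $y$, hence is the identity since it is an isometry of the connected transverse disc.
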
   

\begin{remark}
This lemma holds for singular Riemannian foliations with properly embedded leaves. Within the restriction to foliations without horizontally conjugate points, we have the stronger Proposition \ref{propiso}.
\end{remark}

\begin{proof}
Without restriction and by Lemma \ref{lemexmfd} we can assume $\eps > 0$ to be so small that ${\rm Tub}_\eps (N_0)$ is globally defined and $c|_{(-\eps, 0)}$ only consists of exceptional points whereas on $c|_{(0, \eps)},$ there only lie specificly regular points. Furthermore, assume $N_0$ to be exceptional (by Proposition \ref{propregdense}, it is clear that $N_0$ cannot be specificly regular).

Let $x \in \Sigma_{\rm triv} \cap c$ such that $d(x, c(0)) = \frac{\eps}{4}.$ The corresponding leaf $N_x$ meets the tube ${\rm Tub}_\eps(N_0)$ more than once, namely $a_{N_0}$-times (see the following figure).

\begin{minipage}[t]{1\textwidth}
\vspace{1mm}
\begin{center}
\begin{overpic}[width=7cm,
]{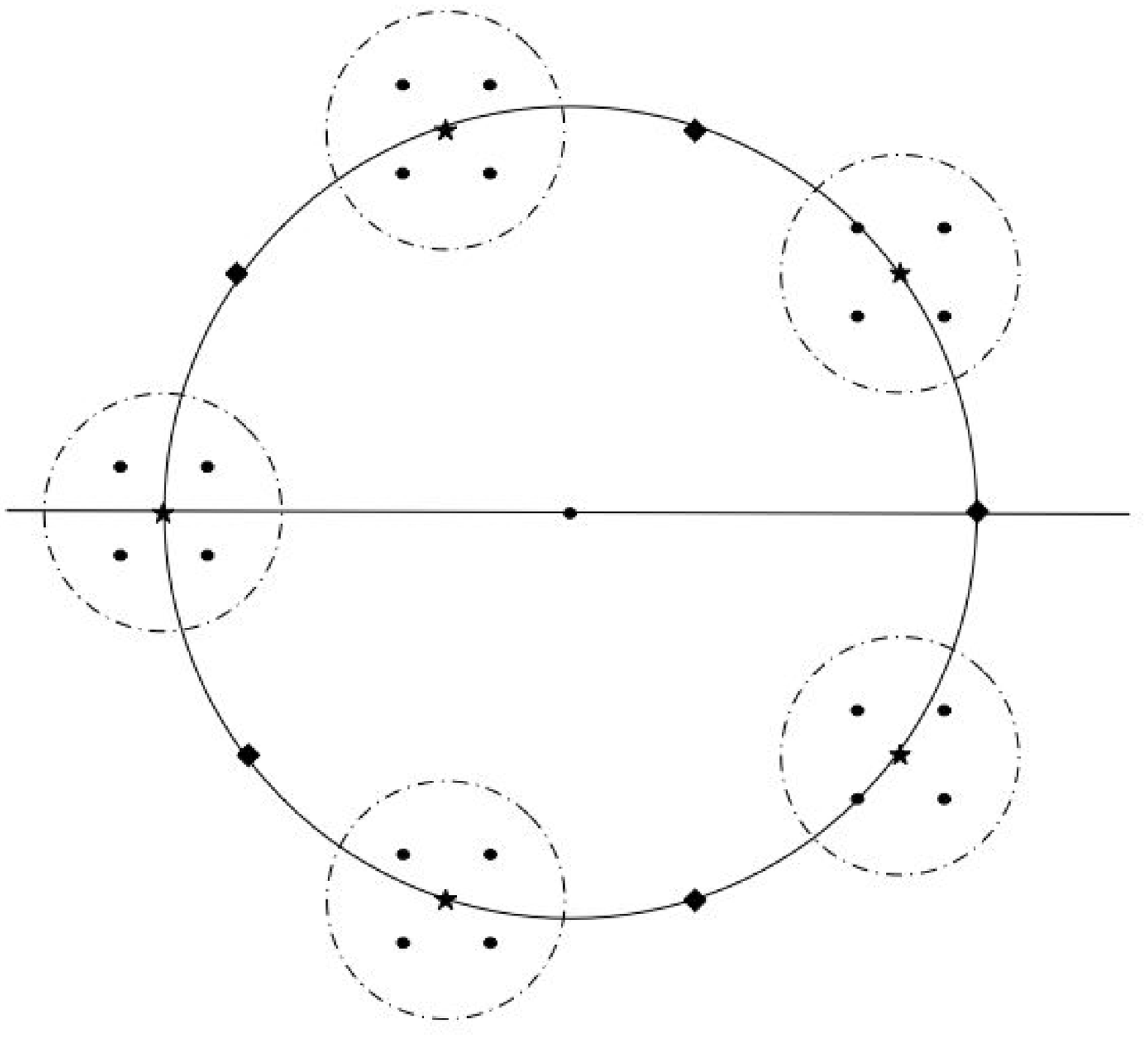}
\put(17,43){$y$}\put(86,43){$x$}\put(50,42){$c(0)$}\put(94,43){$c$}\put(35,55){$S_{\frac{\eps}{4}}(c(0))$}
\put(61,79.5){$x_i$}\put(38,9){$y_i$}\put(-2,55){${\rm Tub}(N_y)$}
\end{overpic} 
\end{center}
\vspace{2mm}
\end{minipage}

All intersection points $\{x_i\}$ with the slice through $c(0)$ lie on the distance sphere $S_{\frac{\eps}{4}}^{c(0)}.$ We know that $y := c(-\frac{\eps}{4}) \in \Sigma_{\rm ex}$ and of course $d(x,y) = \frac{\eps}{2}.$ Sliding of this distance along the leaf $N_x$ to the different points $\{x_i\}$ yields the fact that the leaf $N_y,$ too, meets the tube ${\rm Tub}_\eps(N_0)$ several times. Because of the fixed distance to $N_0,$ the intersection points $\{y_i\}$ of $N_y$ and the slice of $c(0)$ lie on the sphere $S_{\frac{\eps}{4}}^{c(0)}.$ By $d(x,y) = \frac{\eps}{2} = d(x_i, y_i)$ for all $i,$ wo conclude, that each $y_i$ is the antipode of the corresponding $x_i.$ But $N_y$ is exceptional, thus we find specificly regular leaves with several plaques within local tubular neighborhoods around the $y_i.$ Choose such neighboorhoods so small that neither $c(0)$ nor one of the $x_j$ is contained in it. Hence, a specificly regular leaf that defines the type $a_{N_y}$ must have at least $a_{N_y}a_{N_0}$ plaques within ${\rm Tub}_\eps(N_0)$ around $N_0.$ This contradiction shows that $N_0$ is singular. 
\end{proof}

From the beginning, we have called the set $\Sigma_{\rm ex}$ a stratum; now we will show that this intuitional naming is correct in the sense of the following common definition. 

A locally finite partition of a manifold $M$ into locally closed, connected submanifolds $M_i \ (i \in I)$ of $M$, the \emph{strata,} is called a \emph{stratification} if the following condition holds:
For every $i \in I$ denote the closure of $M_i \subset M$ by $\overline M_i.$ Then
$$\overline M_i = M_i \cup \bigcup_{j \in I_i} M_j, \text{ wobei } I_i \subset I \setminus \{i\} \text{ and } \dim M_j < \dim M_i \text{ for all } j \in I_i.$$

\begin{remark}
Recall that in the case of a singular Riemannian foliation coming from the orbit decomposition of a proper group action, the subdivision into orbit types is not only a stratification but also fulfills the tougher technical conditions of a Whitney stratification. We conjecture that this is the case in general.
\end{remark}

The last argument to do for the stratification by types of leaves as in \ref{deftype} is

\begin{lemma}\label{lemacctype}
If $p$ is an accumulation point of exceptional points $\{p_i\}_{i= 1}^\infty$ and not singular, then $N_{p}$ is exceptional of type
$$a_{N_{p}} \ge \limsup \{a_{N_i}\}, \text{ where }a_{N_i} := a_{N_{p_i}}.$$
Beyond, the function $x \mapsto a_{N_x}$ defined along a horizontal geodesic $c \subset \Sigma_{\rm ex}$ is constant except for isolated points, i.e. nearly all leaves intersecting $c$ are of the same type, the remaining points are of higher type.
\end{lemma}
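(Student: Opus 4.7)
For the inequality in the first assertion, I proceed as follows. Since $p$ is not singular (so $p\in\Sigma_{\rm reg}$) and is an accumulation of exceptional points, and since $\Sigma_{\rm ex}$ is closed in $\Sigma_{\rm reg}$ (as noted in the proof of Lemma \ref{lemexmfd}), $N_p$ is itself exceptional. Fix a global tubular neighborhood ${\rm Tub}_\eps(N_p)$ via Theorem \ref{theotube}. For $i$ large, $p_i\in{\rm Tub}_\eps(N_p)$, so a plaque of $N_{p_i}$ lies in the tube, and Lemma \ref{lemcovering} identifies the projection $\pi\colon N_{p_i}\cap{\rm Tub}_\eps(N_p)\to N_p$ as an $\frac{a_{N_p}}{a_{N_{p_i}}}$-fold covering. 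Integrality forces $a_{N_{p_i}}\mid a_{N_p}$, hence $a_{N_{p_i}}\le a_{N_p}$, and passing to $\limsup$ yields the claim.

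For the second assertion, set $a(t):=a_{N_{c(t)}}$. The first part makes $a$ integer-valued and upper semi-continuous along $c$, and Lemma \ref{lemcovering} applied in the local tube around $N_{c(t_0)}$ shows $a(t)\mid a(t_0)$ for $t$ near $t_0$; thus $a$ is locally finite-valued and each sublevel $\{t:a(t)\le m\}$ is open. The crux is to show that at every ``jump'' point $t_0$---one where $a(t_0)=a_0$ exceeds some nearby value---the parameter $t_0$ is isolated in $\{t:a(t)\ge a_0\}$. To this end I would introduce the closed, foliation-saturated sublocus $T_{\ge a_0}:=\{q\in{\rm Tub}_\eps(N_{c(t_0)}):a_{N_q}\ge a_0\}$ and rerun the strip construction from the proof of Lemma \ref{lemexmfd}, with the dichotomy ``type $\ge a_0$ vs.\ type $<a_0$'' in place of ``exceptional vs.\ specifically regular'', to see that $T_{\ge a_0}$ descends under the local submersion to a totally geodesic, locally convex subset of the quotient $\bar U$. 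Since horizontal geodesics project to geodesics in $\bar U$, the intersection of $c$ with $T_{\ge a_0}$ is a subinterval of the parameter domain, which by the jump hypothesis must collapse to $\{t_0\}$.

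Iterating through the finitely many divisors of $a(t_0)$ then yields a discrete set of higher-type exceptions along $c$, with $a$ locally constant between them; the global ``same type'' statement will follow because the locally minimum value is attained on an open dense subset of $c$. The main technical obstacle is to adapt the strip argument in the proof of Lemma \ref{lemexmfd} to the relative locus $T_{\ge a_0}$: one must verify that the sliding-along-leaves and plaque-counting steps propagate the condition ``type $\ge a_0$'' rather than only ``exceptional'', which should reduce to an application of Lemma \ref{lemcovering} inside each intermediate local tube along the strip.
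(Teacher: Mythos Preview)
Your argument for the first inequality is correct and a bit more direct than the paper's: you invoke Lemma~\ref{lemcovering} to obtain $a_{N_{p_i}}\mid a_{N_p}$, whereas the paper counts plaques of nearby specifically regular leaves that witness the types $a_{N_i}$ inside ${\rm Tub}_\eps(N_p)$.

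For the second assertion there is a genuine gap. Even granting the adaptation of Lemma~\ref{lemexmfd} to $T_{\ge a_0}$, convexity in the quotient only shows that $\{t:a(t)\ge a_0\}$ is a \emph{subinterval} of the parameter domain. Your ``jump hypothesis''---the existence of one nearby $t$ with $a(t)<a_0$---bounds this subinterval on one side only; it does not force collapse to $\{t_0\}$, and nothing in the argument excludes, say, $a\equiv a_0$ on a half-interval $[t_0,t_0+\delta]$ with $a<a_0$ just to the left of $t_0$. The iteration over divisors inherits the same defect. The missing ingredient is precisely what the paper supplies first: by rerunning the methods of Proposition~\ref{propregdense} in the relative setting (minimal-type leaves in $U\subset\Sigma_{\rm ex}$ playing the role of specifically regular leaves), one obtains that minimal-type points are \emph{dense} along $c$. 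With density in hand one can either carry over the antipodal argument of Lemma~\ref{lemacc} (the paper's route), or salvage your convexity approach: if $t_0$ were non-isolated in $\{a\ge a_0\}$, density would place a minimal-type parameter strictly between $t_0$ and a nearby $t_n\in\{a\ge a_0\}$, contradicting connectedness of that set. In your proposal density appears only as a consequence of discreteness, which makes the argument circular as written.
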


\begin{proof}
For every tubular neighborhood of $p$ there is an $i_0 \in \N$ such that the exceptional leaves  $\{N_{p_i}\}_{i \ge i_0}$ meet that neighborhood. Additionally, for every $i \ge i_0$ there are specificly regular leaves, meeting the neighborhood $a_{N_i}$-times, namely those who define the type of $N_{p_i}.$ By assumption, the leaves are properly embedded, therefore the number $a_{N_{p}}$ and thus the right hand side of the inequality are finite, and the first claim follows.

Now, restrict the attention to a connected subset $U \subset \Sigma_{\rm ex}$ and define the set of leaves of minimal type, which is open in $U.$ Using the methods of the proof of Proposition \ref{propregdense}, as there are taking thirds of tubular neighborhoods, sliding of distances, and iteration, we can interpret these leaves as \emph{regular in $U$,} i.e. they have to meet tubular neighborhoods of exceptional leaves of higher type more than once. Hence, if two different types of leaves accumulate to one point along a horizontal geodesic in $\Sigma_{\rm ex},$ we can carry over the Lemma \ref{lemacc} directly and conclude that the accumulation point has to be singular, a contradiction.
\end{proof}

With the work already done, the technical Lemma \ref{lemacctype} can be reformulated as
\begin{proposition}\label{proptype}
The decomposition of $\Sigma_{\rm ex}$ with respect to the types of the leaves
$$\Sigma_{\rm ex} = \dot\bigcup_{a_N} \{N \text{ is of type } a_N\}$$ is a stratification.\hfill\qed
\end{proposition}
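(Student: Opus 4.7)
Set $\Sigma_a := \{p \in \Sigma_{\rm ex} \mid a_{N_p} = a\}$. The plan is to verify the three non-trivial conditions in the definition of a stratification: local finiteness of $\{\Sigma_a\}$, each connected component being a locally closed connected submanifold of $M$, and the closure axiom with the strict drop in dimension between adjacent strata. (That the $\Sigma_a$ partition $\Sigma_{\rm ex}$ is immediate from Definition \ref{deftype}.)

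Local finiteness is quickest. Fix $p \in \Sigma_{\rm ex}$ and take a global tube $\mathrm{Tub}_\eps(N_p)$ from Theorem \ref{theotube}. By Lemma \ref{lemcovering}, every exceptional leaf $L \subset \mathrm{Tub}_\eps(N_p)$ has $a_L$ dividing $a_{N_p}$, so only finitely many types can occur in the tube.

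For the submanifold structure I would make Lemmas \ref{lemexmfd} and \ref{lemacctype} work together. Lemma \ref{lemexmfd} presents each connected component of $\Sigma_{\rm ex}$ locally as $\pi_U^{-1}(C)$ with $C \subset \bar U$ totally geodesic and convex, smooth on its interior. Lemma \ref{lemacctype} says that along any horizontal geodesic in $\Sigma_{\rm ex}$ the type function is constant except at isolated higher-type points. Because $C$ is geodesically convex, any two points of $C$ lie on a common geodesic; this propagates the one-dimensional statement to all of $C$, giving that the locus of type exactly $a$ is relatively open in the locus of type $\ge a$ within $C$, hence locally closed in $C$. Pulling back by the submersion $\pi_U$ makes $\Sigma_a$ a locally closed submanifold of $M$ near $p$, and its connected components are the strata.

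The closure axiom comes last. Let $K$ be a connected component of $\Sigma_a$ and $p \in \overline K \setminus K$, closure taken in $\Sigma_{\rm ex}$. Lemma \ref{lemacctype} forces $a_{N_p} \ge a$, with strict inequality since $p \notin K$, so $p$ lies in a stratum of strictly higher type. The required strict dimension drop is read off the same local picture: inside $C$, the subsets of strictly higher type lie in the relative boundary of the type-$a$ locus, which has strictly smaller dimension (as does the relative boundary of any locally convex set in a Riemannian manifold), and the submersion $\pi_U$ transfers this inequality to $M$. The main obstacle is precisely this last sentence: Lemma \ref{lemacctype} is genuinely a one-dimensional statement, and the real content of the argument is to bootstrap it via the convex-set picture of Lemma \ref{lemexmfd} into the transverse dimension inequality demanded by the definition of a stratification.
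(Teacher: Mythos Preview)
Your proof is correct and matches the paper's approach: the paper treats Proposition~\ref{proptype} as an immediate reformulation of Lemma~\ref{lemacctype} (it carries only a \qed), and you have simply spelled out the verification of the stratification axioms, drawing additionally on Lemmas~\ref{lemexmfd} and~\ref{lemcovering}. One minor sharpening: for the dimension drop it is cleaner to argue that the higher-type locus is a nowhere-dense convex subset of $C$ (minimal type being open and dense, by the iteration indicated in the proof of Lemma~\ref{lemacctype}) and therefore of strictly smaller dimension, rather than appealing to a ``relative boundary.''
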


Molino has shown that the decomposition of $(M;\F)$ with respect to the dimension of the leaves is a stratification. Thus, we conclude with the help of Lemma \ref{lemacctype} and Proposition \ref{proptype}:

\begin{theorem}\label{theostratification}
The finer decomposition of a singular Riemannian foliation $(M,\F)$ with properly embedded leaves into the sets of specificly regular leaves, strata of types of exceptional leaves, and dimensional strata of singular leaves is a stratification.\hfill\qed
\end{theorem}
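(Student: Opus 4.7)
The plan is to bootstrap from the two stratifications already established: Molino's decomposition of $M$ by leaf dimension, cited in the preliminaries, and the decomposition of $\Sigma_{\rm ex}$ by type from Proposition \ref{proptype}. The proposed finer decomposition differs from Molino's only on the regular stratum, where the single piece $\Sigma_{\rm reg}$ is replaced by $\Sigma_{\rm triv}$ together with all type strata of $\Sigma_{\rm ex}$. I therefore need to verify the three requirements of a stratification — local finiteness, locally closed connected submanifold pieces, and the closure-and-dimension condition — for the combined family.

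For local finiteness, Molino's result already gives it on the dimensional level, so only the type strata require attention. Because the leaves are properly embedded, every relatively compact tubular neighborhood meets each passing leaf in only finitely many plaques, so the type $a_N$ is bounded on compacta and only finitely many type strata intersect any compact set. Each piece is a locally closed, connected submanifold: $\Sigma_{\rm triv}$ is open in $M$ by Lemma \ref{lemhol} and connected by Proposition \ref{propregcon}; the exceptional type strata are handled by Proposition \ref{proptype} (taking connected components); and the singular dimensional strata are handled by Molino.

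For the closure-and-dimension condition, accumulation \emph{inside} $\Sigma_{\rm reg}$ is governed by Proposition \ref{proptype} together with the density statement Proposition \ref{propregdense}, while accumulation inside $\Sigma_{\rm sing}$ is governed by Molino; in both cases the dimension strictly drops. The genuinely new case — and the point I expect to be the main (if mild) obstacle — is a specifically regular or exceptional stratum accumulating onto a point of $\Sigma_{\rm sing}$, since Proposition \ref{proptype} only controls closures within $\Sigma_{\rm ex}$. Here Lemma \ref{lemacctype} together with Lemma \ref{lemacc} guarantees that any accumulation point lying outside $\Sigma_{\rm reg}$ is genuinely singular (not merely exceptional of undetected higher type), and the inequality $\dim\Sigma_q < n = \dim\Sigma_{\rm reg}$ for $q<k$ is automatic from Molino. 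Conversely, no singular stratum can accumulate to a regular or exceptional point, since $\bigcup_{q'\le q}\Sigma_{q'}$ is closed in $M$. With these observations the verification is a bookkeeping exercise and the theorem follows.
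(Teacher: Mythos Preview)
Your proposal is correct and follows essentially the same approach as the paper: the paper's proof consists of a single sentence invoking Molino's dimensional stratification together with Lemma~\ref{lemacctype} and Proposition~\ref{proptype}, and you have simply spelled out the bookkeeping that this sentence leaves implicit. The one place where you add genuine content beyond the paper is the local finiteness of the type strata, which the paper does not address explicitly; your argument via boundedness of types on compacta (using proper embeddedness and the upper-semicontinuity in Lemma~\ref{lemacctype}) is the natural one.
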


 \section{Foliations without horizontally conjugate points}\label{s:whcp}

The notion of a foliation without horizontally conjugate points was introduced by Lytchak and Thorbergsson  in \cite{lth} as follows. 

\begin{definition}\label{defwhcp}
A singular Riemannian foliation is \emph{without horizontally conjugate points} if the following holds for all leaves $N$ and all geodesics $\gamma$ perpendicular to $N$: Every $N$-Jacobi field along $\gamma$ that is tangential to a leaf different from $N$ is tangential to every leaf met by $\gamma.$
\end{definition}

We call a point $q \in M$  \emph{horizontally conjugate to $p$} if there is a horizontal geodesic $\gamma: [0,1] \to M$ with $\gamma(0) = p, \gamma(1) = q$ and an $N_p$-Jacobi field $J$ along $\gamma$ such that $J(0) \in T_pN_p, J(1) \in T_qN_q,$ but $J$ is not everywhere tangential.
 
\subsection{Tangential Jacobi fields, homothetic transformations}
 
Let $(M, \F)$ be a singular Riemannian foliation and let $\gamma$ be a horizontal geodesic. As in \cite{lth} we define the vector space
$$\mathcal K := \bigcap_t \{J \mid J \text{ is a }N_t \text{-Jacobi field along }\gamma\},$$ wherby the $t$'s cover the domain of definition of $\gamma.$

We will call a Jacobi field $J$ along $\gamma$ \emph{everywhere tangential,} if for all $t$ it is tangential to the leaf $N_t.$ 
Thus, every $J \in \mathcal K$ is everywhere tangential.

At least if the foliation has no horizontally conjugate points, we can reverse this implication as follows.

\begin{lemma}\label{lemjacobifield}
Let $(M, \F)$ be a singular Riemannian foliation  without horizontally conjugate points. If $J$ is an $N_{t_0}$-Jacobi field that is everywhere tangential along $\gamma,$ then $J$ is an $N_{t}$-Jacobi field for all $t \in \R$, i.e. $J \in \mathcal K$.
\end{lemma}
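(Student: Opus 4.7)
My plan is to establish the $N_{t_1}$-Jacobi field condition at each $t_1$ via the variational characterization recalled in Section~2.3: it suffices to exhibit $J$ as the variational vector field of some variation of $\gamma$ by geodesics $g_s$ with $g_s(t_1)\in N_{t_1}$ and $\dot g_s(t_1)\perp N_{t_1}$. By hypothesis there is already a variation $f_s$ of $\gamma$ witnessing $J$ as an $N_{t_0}$-Jacobi field, and adaptedness of $g$ to $\F$ forces each $f_s$ to be a horizontal geodesic.

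To produce the required variation based at $N_{t_1}$, I would invoke a second, geometrically distinguished family of horizontal geodesics. By step~(i) in the proof of Theorem~\ref{theotube}, every $x\in N_{t_0}$ near $\gamma(t_0)$ is the starting point of a horizontal geodesic $\gamma_x$ of length $|t_1-t_0|$ from $N_{t_0}$ to $N_{t_1}$. Any smooth one-parameter subfamily $\{\gamma_{x(s)}\}$ is simultaneously a variation of $\gamma$ by horizontal geodesics perpendicular to $N_{t_0}$ at $t_0$ and perpendicular to $N_{t_1}$ at $t_1$, so its variational vector field is at once an $N_{t_0}$- and an $N_{t_1}$-Jacobi field. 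Write $\mathcal J$ for the space of $N_{t_0}$-Jacobi fields along $\gamma$ and $\mathcal D\subseteq \mathcal J$ for the subspace of such variational fields; sliding $\gamma(t_0)$ through $N_{t_0}$ gives $\dim\mathcal D=\dim N_{t_0}$.

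The proof then reduces to placing $J$ in $\mathcal D$, which I propose to handle by a dimension count. The no-horizontally-conjugate-points hypothesis says precisely that an $N_{t_0}$-Jacobi field tangent to $N_{t_1}$ at $t_1$ (with $N_{t_1}\neq N_{t_0}$) is automatically everywhere tangential; therefore the subspace $\mathcal C\subseteq\mathcal J$ of everywhere-tangential fields coincides with $\mathcal B:=\{K\in\mathcal J \mid K(t_1)\in T_{\gamma(t_1)}N_{t_1}\}$. When $\gamma(t_1)$ is a regular, non-focal point, the evaluation $\mathcal J\to T_{\gamma(t_1)}M$, $K\mapsto K(t_1)$, is an isomorphism, so $\dim\mathcal B=\dim N_{t_1}=\dim N_{t_0}$. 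Combined with $\mathcal D\subseteq \mathcal B$ and $\dim\mathcal D=\dim N_{t_0}$, this forces $\mathcal D=\mathcal B=\mathcal C\ni J$; at every such $t_1$, $J$ is therefore an $N_{t_1}$-Jacobi field.

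The main obstacle is extending the conclusion to the remaining $t_1$, i.e.\ focal or singular times. I would handle these by a density/limit argument: the set of regular non-focal $t_1$ is open and dense in the domain of $\gamma$ (Proposition~\ref{propregdense} plus discreteness of focal points along $\gamma$), and both $N_{t_1}$-Jacobi conditions $J(t_1)\in T_{\gamma(t_1)}N_{t_1}$ and $J'(t_1)+A_{\dot\gamma(t_1)}J(t_1)\in\nu_{\gamma(t_1)}N_{t_1}$ survive the passage to a limit $t_1\to t_1^{\ast}$: the first by the everywhere-tangential hypothesis together with closedness of the tangent bundle along a dimensional stratum, and the second by smooth dependence of $J$, $J'$ and the shape operator on $t$ on each stratum, combined with continuity of the orthogonal splitting into tangent and normal parts.
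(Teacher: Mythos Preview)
Your approach differs substantially from the paper's. The paper works directly with the variation $f$ from Remark~\ref{Prop3.1} and shows that for each fixed $t$ the curve $c(s)=f(s,t)$ remains in the single leaf $N_t$: since the Jacobi fields $J_s$ along every $f_s$ are everywhere tangential, one has $\dot c(s)\in T_{c(s)}N_{c(s)}$ for all $s$, and a chart argument on the top-dimensional stratum met by $c$, followed by a closedness step, forces $c$ into one leaf (this becomes Proposition~\ref{proptangentialcurve}). Since each $f_s$ is horizontal it is automatically perpendicular to $N_t$, and so $J$ is an $N_t$-Jacobi field by the variational characterization. This handles all $t$ --- regular, singular, focal --- in one stroke, with no limiting argument needed.

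Your dimension-count route is a reasonable alternative at regular non-focal parameters, modulo two caveats: the smooth family $\gamma_x$ should be produced via the implicit function theorem from the non-focal hypothesis rather than from step~(i) of Theorem~\ref{theotube}, which yields only pointwise existence of a geodesic and moreover assumes completeness and proper embeddedness not present in Lemma~\ref{lemjacobifield}; and the equality $\dim N_{t_1}=\dim N_{t_0}$ tacitly presupposes that $N_{t_0}$ already has maximal dimension along $\gamma$. The genuine gap, however, is the limit step at a singular $t_1^\ast$. The leaf $N_{t_1^\ast}$ has strictly smaller dimension than the neighbouring $N_{t_1}$, so as $t_1\to t_1^\ast$ through regular values the tangent/normal splitting of $T_{\gamma(t_1)}M$ and the shape operator $A_{\dot\gamma(t_1)}$ of $N_{t_1}$ do \emph{not} converge to those of $N_{t_1^\ast}$; a limit of the rank-$k$ second fundamental forms is not the rank-$l$ form of the singular leaf. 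Your appeal to ``smooth dependence on each stratum'' cannot help here, because you are approaching $t_1^\ast$ from a different stratum, and along $\gamma$ there is nothing in the stratum of $N_{t_1^\ast}$ nearby to limit from. Hence the condition $J'(t_1^\ast)+A_{\dot\gamma(t_1^\ast)}J(t_1^\ast)\in\nu_{\gamma(t_1^\ast)}N_{t_1^\ast}$ is not obtained by your limit, and the argument does not close at the singular parameters.
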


\begin{remark}\label{Prop3.1}
This lemma is a non-mentioned detail of the proof of Proposition 3.1 in \cite{lth} which states that,  within the notation used above, $$T_{\gamma(t)}N_t = \{J(t) \mid J \in \mathcal K\}.$$

In that article is shown that for every $t_0$ in the domain of $\gamma$ and for every $v \in T_{\gamma(t_0)}N_{t_0}$ there is a $N_{t_0}$-Jacobi field $J$ which is tangential to the corresponding leaf through $\gamma(t)$ for all $t$. To prove this, a curve $\alpha \subset N_{t_0}$ with $\dot\alpha(0) = v$ is lifted to a neighboring leaf $B$ within a tubular neighborhood of $\gamma(t_0)$ via the local Riemannian submersion $\pi_{N_{t_0}}.$ The geodesics pointwise connecting both curves define a horizontal variation $f: (-\varepsilon, \varepsilon) \times \R \rightarrow M$ of $\gamma.$ The appropriate Jacobi field $J$ along $\gamma$ is tangential twice, namely in $N_{t_0}$ and in $B,$ hence -- by assumption -- everywhere.

As $\alpha$ lies in $N_{t_0},$ there actually is a family of Jacobi fields $J_s$ (along the geodesics $f_s(\cdot )$), that are tangential to the leaves $N_{f_s(t)}$ for all $t.$ 
\end{remark}

\noindent\emph{Proof of Lemma \ref{lemjacobifield}.}
Fix $t$ and define $c : (-\varepsilon, \varepsilon) \rightarrow M,\ c(s) := f(s, t),$ to be a path through the variation $f$ at time ${t}$. Then, $$\dot c(0) \in T_{\gamma (t)}N_{t} \quad \text{and} \quad \dot c(s) \in T_{f_s(t)}N_{f_s(t)}.$$

We now verify that -- if necessary diminuish $\varepsilon$ -- for all $s \in (-\varepsilon, \varepsilon),$ the geodesics $f_{s}(\cdot)$ meet the leaf $N_{t}$ at time ${t}.$ For this purpose let $s_0 \in (-\varepsilon, \varepsilon)$ be a point with $$\dim(N_{c(s_0)}) = \max \{\dim(N_{c(s)}) \mid s \in (-\varepsilon, \varepsilon)\}.$$ Without loss of generality consider $s_0$ to be in the interval $(-\varepsilon,0)$. There is a neighborhood of $s_0$ in $M$ containing leaves of the same or higher dimension only. By maximality of the leaf dimension in $c(s_0)$ along $c,$ there is a $\delta >0 $ with $c |_{(s_0 - \delta,\, s_0 + \delta)} \subset \Sigma_{\dim(N_{c(s_0)})}$. Restrict the attention to the stratum $\Sigma_{\dim(N_{c(s_0)})}$ to see that an everwhere tangential path cannot cross several leaves:

For $c(s_0),$ there is a local, centered chart $\psi: U_{s_0} \rightarrow V \times W \subset \R^k \times \R^{n-k}$ such that $\psi(U_{s_0}\cap N_{s_0}) \subset \R^k,$ and if $N$ is a leaf with $N \cap U_{s_0} \neq \emptyset,$ then $$\psi(N \cap U_{s_0}) \subset \R^k \times \{\text{isolated points}\}.$$ Denote the composition $\psi \circ c : (s_0 - \delta,\, s_0 + \delta) \rightarrow V \times W $ by two components $(c_1, c_2)$.  As $\dot c(s) \in T_{f_s(s_0)}N_{f_s(s_0)}$, we conclude $\dot c_2 \equiv 0$ and $c(s) \in N_{s_0}$ for all $s \in (s_0 - \delta,\, s_0 + \delta).$ 

Let $\tilde \delta$ be maximal in the sense that $c |_{(s_0 - \delta,\, s_0 + \tilde\delta)}$ is contained in the leaf $ N_{c(s_0)}.$ This interval then is open as seen above. We now look at the boundary point $c(s_0 + \tilde \delta):$ Consider a tubular neighborhood of the leaf $N_{c(s_0 + \tilde \delta)}$, wherein the plaques of the leaves have locally constant distance to $N_{c(s_0 + \tilde \delta)}$. The path $c |_{[s_0 ,\, s_0 + \tilde\delta]}$ is compact, hence $c$ crosses finitely many plaques during this time. Thus, there is a $\rho > 0$ such that the distance from $c(s_0 + \tilde \delta)$ to $c(s)$ is bigger than or equal to $\rho$ for all $s \in [s_0 ,\, s_0 + \tilde\delta),$ contradicting continuity of $c$. Hence, $c |_{(-\varepsilon, \varepsilon)} \subset N_{c(s_0)} = N_{c(0)} = N_{t}.$ 

As $f_s(\cdot )$ is perpendicular to $N_{t_0}$ for all $s,$ $f_s(\cdot )$ is perpendicular to $N_{t}.$ Therefore, $f$ is a $N_{t}\;$-variation, and $J$ is a $N_{t}\;$-Jacobi field. 
\qed\\

The preceding considerations reveal for singular Riemannian foliations

\begin{proposition}\label{proptangentialcurve}
A curve $c \subset M$ that is always tangential to the leaves, i.e. for all $s$ holds \mbox{$\dot c(s) \in T_{c(s)}N_{c(s)},$} lies in one leaf.
\end{proposition}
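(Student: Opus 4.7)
The plan is to show that $S := \{s : c(s) \in N_{c(0)}\}$ is both open and closed in the connected domain of $c$; since $0 \in S$, this will force $S$ to equal the whole domain. Closedness is immediate from the running hypothesis that the leaves are properly embedded, hence closed subsets of $M$: $S = c^{-1}(N_{c(0)})$ is the preimage of a closed set under a continuous map.

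For openness, fix $s_{0} \in S$ and restrict attention to a small closed sub-interval $I$ around $s_{0}$. The integer-valued function $s \mapsto \dim N_{c(s)}$ is bounded by $\dim M$ and hence attains its maximum on $I$ at some $s_{1}$, with value $k$. Because $\bigcup_{q < k} \Sigma_{q}$ is closed in $M$, there is a neighbourhood of $c(s_{1})$ containing only leaves of dimension $\geq k$; combined with the maximality of $k$ on $I$, the curve $c$ then lies in the stratum $\Sigma_{k}$ on a sub-interval around $s_{1}$. On this regular stratum one has at $c(s_{1})$ a foliation chart $\psi : U \to V \times W \subset \R^{k} \times \R^{n-k}$ in which every leaf meets $U$ in the form $\R^{k} \times \{\text{isolated points}\}$. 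The tangentiality hypothesis on $\dot c$ forces the transverse component of $\psi \circ c$ to have vanishing derivative, hence to be locally constant, so that $c$ lies in a single plaque of $N_{c(s_{1})}$ in a neighbourhood of $s_{1}$ --- this is exactly the local argument already used inside the proof of Lemma \ref{lemjacobifield}.

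To pass from ``near $s_{1}$'' to all of $I$, consider $T := \{s \in I : c(s) \in N_{c(s_{1})}\}$. This set is non-empty (it contains $s_{1}$), closed in $I$ (since $N_{c(s_{1})}$ is closed in $M$), and open in $I$: at any interior point $s' \in T$ one has $\dim N_{c(s')} = k$ because $N_{c(s')} = N_{c(s_{1})}$, so $s'$ again lies in $\Sigma_{k}$ and the same foliation-chart argument applies, pinning $c$ to the plaque of $N_{c(s')}$ through $c(s')$ in a neighbourhood of $s'$. Connectedness of $I$ now gives $T = I$, so in particular $c(s_{0}) \in N_{c(s_{1})}$. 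Since $s_{0} \in S$ also forces $c(s_{0}) \in N_{c(0)}$, one concludes $N_{c(s_{1})} = N_{c(0)}$, so $I \subset S$, proving openness of $S$.

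The main obstacle is the openness step, specifically that the leaf dimension need not be locally maximal at $s_{0}$ itself, so a foliation chart is not directly available at $c(s_{0})$. The device of moving to a nearby $s_{1}$ of locally maximal leaf dimension --- which automatically lives in a top-dimensional stratum where a foliation chart is available --- and then recovering control at $s_{0}$ by an auxiliary open-closed argument on $I$, is the technical heart of the proof.
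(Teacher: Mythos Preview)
Your argument is correct and follows essentially the same strategy as the paper: pass to a point of locally maximal leaf dimension along the curve, use a foliation chart for the restricted (regular) foliation on that stratum to pin $c$ to a single plaque locally, and then run an open--closed argument. The paper extracts this proposition directly from the proof of Lemma~\ref{lemjacobifield}, and your ``local argument'' explicitly invokes that passage.

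One minor difference worth noting: for the closedness steps (both for $S$ and for $T$) you invoke the running hypothesis that leaves are properly embedded, hence closed in $M$. The paper's argument at the boundary point (inside the proof of Lemma~\ref{lemjacobifield}) instead takes a local tubular neighbourhood of the putative boundary leaf and uses that plaques have locally constant distance to the centre; since only finitely many plaques of $N_{c(s_1)}$ are crossed on a compact parameter interval, continuity of $c$ forces the boundary point to lie in $N_{c(s_1)}$. That route does not require proper embedding and is why the paper can state the proposition ``for singular Riemannian foliations'' without qualification. Your version is perfectly valid under the standing hypotheses of Chapters~\ref{s:ex}--\ref{s:taut}, but the paper's closedness device buys the statement in full generality.
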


With regard to Lemma \ref{lemjacobifield} and Remark \ref{Prop3.1}, we now can define negative homothe\-tical transformations as follows. 

\begin{proposition}\label{prophomotheties}
If $(M, \F)$ is a singular Riemannian folation without horizontally conjugate points, then
\renewcommand{\labelenumi}{(\roman{enumi})}
\begin{enumerate}
\item except at isolated points, $\dim N_{\gamma(t)} = \max \{\dim N_{\gamma(s)}\mid s \in \R\}$ along any horizontal geodesic $\gamma.$
\item The homothetical transformations $h_{\lambda}$ also exist for negative $\lambda$ and  respect the foliation. Particularly, the \emph{reflection} at a leaf respects the foliation.
\end{enumerate}
\end{proposition}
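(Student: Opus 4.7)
The plan is to prove (ii) first, where the main hypothesis is used, and then derive (i) as a consequence. In (ii) the key case is the reflection $h_{-1}$, since for general negative $\lambda$ the map $h_\lambda = h_{|\lambda|} \circ h_{-1}$ factors through a positive homothety already handled by Lemma \ref{lemposhom}. On a tubular neighborhood $\mathrm{Tub}_{\eps}(N)$ of a leaf $N$ I would define $h_{-1}(\exp^\perp v) = \exp^\perp(-v)$ and argue that it maps plaques to plaques.

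For this, pick a smooth curve $c(s) = \exp^\perp(v(s))$ inside an arbitrary plaque $P \subset \mathrm{Tub}_{\eps}(N)$. Either $P \subset N$ (in which case $v \equiv 0$ and $h_{-1}$ is the identity on $P$) or $P \cap N = \emptyset$ by the usual plaque dichotomy, so I may assume the latter. Form the two-parameter horizontal variation $f(s,t) = \exp^\perp(t\,v(s))$ for $t \in [-1,1]$. Each $f_s(\cdot)$ is a geodesic meeting $N$ perpendicularly at $t = 0$, so the variational field $J_s(t) = \partial_s f(s,t)$ is an $N$-Jacobi field along $f_s$, tangential to $N$ at $t = 0$ and to $N_{c(s)} \neq N$ at $t = 1$ (because $\dot c(s) \in T_{c(s)}N_{c(s)}$). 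By Definition \ref{defwhcp} the field $J_s$ is then tangential at every leaf met by $f_s$; evaluating at $t = -1$ gives $\dot{\tilde c}(s) \in T_{\tilde c(s)}N_{\tilde c(s)}$ for $\tilde c(s) := h_{-1}(c(s))$. Proposition \ref{proptangentialcurve} places $\tilde c$ inside a single leaf, and by connectedness $h_{-1}(P)$ lies in a single plaque.

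For (i), I would exploit the symmetry provided by (ii): for any $t_0 \in \R$, applying $h_{-1}$ in a tubular neighborhood of $N_{\gamma(t_0)}$ takes $\gamma(t_0 + t)$ to $\gamma(t_0 - t)$ and preserves leaf dimensions, so $\dim N_{\gamma(t_0+t)} = \dim N_{\gamma(t_0-t)}$ for small $|t|$. Combined with the consequence of Lemma \ref{lemposhom} that $\dim N_{\gamma(\cdot)}$ is constant on each one-sided punctured interval about $t_0$, the left- and right-hand constant values $d_+(t_0)$ and $d_-(t_0)$ coincide. A short overlap argument on intersecting punctured neighborhoods shows that this common one-sided value is in fact \emph{globally} constant on $\R$, equal to some $d^\ast$; hence $\dim N_{\gamma(t)} = d^\ast$ outside a set of points where the dimension strictly drops, and such drop points are automatically isolated, with $d^\ast$ equal to the maximum.

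The main obstacle is the tangency step in (ii): one must ensure that $J_s$ is genuinely an $N$-Jacobi field to which Definition \ref{defwhcp} applies, i.e.\ that the tangency at $t = 1$ occurs at a leaf \emph{different} from $N$. This is precisely what forces the plaque dichotomy $P \subset N$ versus $P \cap N = \emptyset$; once this is addressed the hypothesis cleanly propagates tangency from $t = 1$ back to $t = -1$, and the rest is bookkeeping with $\exp^\perp$ and the positive homothety theory of Lemma \ref{lemposhom}.
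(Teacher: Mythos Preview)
Your argument is correct, but it reverses the paper's logic. The paper proves (i) first: given a point $\gamma(t_1)$ of non-maximal leaf dimension $l<k$, it takes the $(k-l)$-dimensional intersection of a nearby maximal plaque $N_{t_1-\eps}$ with the slice at $\gamma(t_1)$, forms the $(k-l)$-parameter family of horizontal geodesics through $\gamma(t_1)$ to this intersection, continues them to the far side, and argues (via the everywhere-tangential Jacobi fields of Remark \ref{Prop3.1}) that the endpoints at $t_1+\eps$ again lie in a single leaf of the same maximal dimension $k$. Item (ii) is then read off from this construction, since the argument is precisely that the reflection at $N_{t_1}$ carries the plaque of $N_{t_1-\eps}$ into a plaque of $N_{t_1+\eps}$.

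You instead establish (ii) directly: for an arbitrary curve $c$ in a plaque of $\mathrm{Tub}_\eps(N)$ you invoke Definition \ref{defwhcp} on the variational $N$-Jacobi fields to propagate tangentiality from $t=1$ to $t=-1$, and then appeal to Proposition \ref{proptangentialcurve} to confine the reflected curve to a single leaf. This is cleaner and more conceptual: it avoids the explicit dimension bookkeeping at singular points, applies uniformly to \emph{every} center leaf $N$ (not just those of non-maximal type), and makes transparent exactly where the no-horizontally-conjugate-points hypothesis enters. Your derivation of (i) from the symmetry $h_{-1}$ combined with the one-sided constancy from Lemma \ref{lemposhom} is also sound; the overlap argument you sketch does show that the punctured-neighborhood value $d(t_0)$ is locally constant, hence globally constant and equal to the maximum. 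What the paper's route buys, on the other hand, is the explicit $(k-l)$-dimensional variation through a singular point, which is reused verbatim later in Lemma \ref{lemfocalpoint} and Proposition \ref{propsingfocal} to identify focal multiplicities.
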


\begin{proof} To prove the first item, let $\gamma(t_0)$ be such that the leaf through $\gamma(t_0)$ is of maximal dimension (along the geodesic); let $N_{\gamma(t_1)}$ be of smaller dimension. By maximality, we can choose $t_1$ in such a way that $\gamma |_{[t_0, t_1)} \subset \Sigma_{\dim(N_{t_0})}.$ Now consider a local tubular neighborhood of $\gamma(t_1) \in N_{t_1}$ of normal radius $\rho$ and its intersection with the stratum $\Sigma_{\dim(N_{t_1})}$. Following \cite{molino}, p.~196, this is a union of geodesic arcs of length $2\rho$ perpendicular to $N_{t_1}.$ As $\gamma$ is perpendicular to $N_{t_1}$ in $\gamma(t_1),$ too, we have
$$\gamma |_{(t_1 - \rho, t_1 + \rho)} \subset \Sigma_{\dim(N_{t_1})} \ \text{or }\; (\gamma |_{(t_1 - \rho, t_1 + \rho)}\setminus \{\gamma(t_1)\}) \cap \Sigma_{\dim(N_{t_1})} = \emptyset,$$
i.e.~the geodesic arc $\gamma |_{(t_1 - \rho, t_1 + \rho)}$ is either contained completely in the stratum $\Sigma_{\dim(N_{t_1})}$ or not at all except the point $\gamma(t_1).$

The first inclusion cannot be fulfilled as $\gamma |_{[t_0, t_1)} \subset \Sigma_{\dim N_{t_0}}$ and we assumed the strata to be different. Because of the existence of positive homothetical transformations, it is only left to show that there is an $\varepsilon < \rho,$ such that $\gamma(t_1 + \varepsilon) \in \Sigma_{\dim (N_{t_0})}.$

Denote the maximal dimension of leaves along $\gamma$ by $k,$ and the dimension of $N_{t_1}$ by $l$. The intersection of a slice through $\gamma(t_1)$ with a plaque of a leaf $N_{t_1 - \varepsilon}$ is $(k-l)$-dimensional and lies within a $(\dim(M)-l)$-sphere around $\gamma(t_1)$. All geodesics beginning in $\gamma(t_1)$ and ending in $N_{t_1 - \varepsilon}$ at the same time as $\gamma$ form a $(k-l)$-dimensional variation of horizontal geodesics which can be continued up to infinity. The negative starting vectors are of course tangential to the negatives of the curves and span a $(k-l)$-dimensional submanifold in $T_{\gamma(t_1)}M.$ Within the tubular neighborhood, we can assume the existence of unique geodesics between arbitrary points, and the negative variation up to $t_1 + \varepsilon$ has to stay $(k-l)$-dimensional. Hence, there is only one leaf meeting the variation at time $t_1 + \varepsilon$ and this is $k$-dimensional which means maximal.

The second item now is easily seen: The proof of (i) shows the existence of the homothetical transformation $h_{-1}$ at $N_{t_1}$. Compose this with positive ones.
\end{proof}

\begin{corollary}\label{corcovering}
Let $N$ be a leaf in $M$ with global tubular neighborhood ${\rm Tub}_\eps(N).$ Then, the homothetical transformations $$h_{\lambda}: S_\delta^N \to S_{\lambda\delta}^N$$ (as long as $\lambda \in (-\eps, \eps)$ and $\delta$ are defined) map leaves diffeomorphically onto each other.
\end{corollary}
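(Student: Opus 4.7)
The plan is to realise $h_\lambda$ as a foliation-respecting diffeomorphism of the entire global tube and then read off the corollary as a restriction to a distance sphere.

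First, by Theorem \ref{theotube} the normal exponential map $\exp^\perp \colon B_\eps(N) \to {\rm Tub}_\eps(N)$ is a global diffeomorphism, which means the local construction of $h_\lambda$ can be performed once on the whole tube. Fibrewise multiplication $m_\lambda \colon v \mapsto \lambda v$ on $\nu N$, restricted to those vectors for which both $v$ and $\lambda v$ stay in $B_\eps(N)$, is a smooth diffeomorphism onto its image that scales norms by $|\lambda|$. Transferring this through $\exp^\perp$ defines $h_\lambda$ for positive $\lambda$; for negative $\lambda$ I will compose with the negative homothety $h_{-1}$ supplied by Proposition \ref{prophomotheties}(ii). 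In either case $h_\lambda$ is a smooth diffeomorphism of the admissible subtube. Moreover it sends a point at normal distance $\delta$ from $N$ to a point at normal distance $|\lambda|\delta$, since the normal radial geodesic realises the distance inside ${\rm Tub}_\eps(N)$; hence $h_\lambda(S_\delta^N) = S_{|\lambda|\delta}^N$ and $h_\lambda$ restricts to a diffeomorphism of spheres.

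For the leaf assertion, Lemma \ref{lemposhom} says that positive homotheties send plaques to plaques, and Proposition \ref{prophomotheties}(ii) extends this to $h_{-1}$; the composition $h_\lambda$ therefore carries plaques to plaques for every admissible $\lambda$. Since $h_\lambda$ is a global diffeomorphism of the tube with inverse $h_{1/\lambda}$ equally respecting $\F$, for each leaf $L$ meeting ${\rm Tub}_\eps(N)$ the image $h_\lambda(L)$ is again such a leaf and
$$h_\lambda(L \cap S_\delta^N) \;=\; h_\lambda(L) \cap S_{|\lambda|\delta}^N.$$
The restriction of a global diffeomorphism is a diffeomorphism, so $h_\lambda$ maps each leaf-intersection with $S_\delta^N$ diffeomorphically onto the corresponding leaf-intersection with $S_{|\lambda|\delta}^N$.

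I do not expect a serious obstacle here: the actual content has already been absorbed into Theorem \ref{theotube} (which removes any gluing issue by making the tubular neighborhood global in the first place) and into Proposition \ref{prophomotheties}(ii) (which is precisely what upgrades the a priori one-sided $h_\lambda$ to all $\lambda$). What remains is the tautological observation that $\exp^\perp \circ m_\lambda \circ (\exp^\perp)^{-1}$ preserves the radial coordinate up to the factor $|\lambda|$ and carries $\F$ to $\F$, from which both the target $S_{|\lambda|\delta}^N$ and the diffeomorphism-between-leaves statement are immediate.
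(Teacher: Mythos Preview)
Your argument is correct but follows a different route from the paper's. The paper does not invoke the global diffeomorphism structure of $h_\lambda$ together with its inverse; instead it argues in two analytic steps. First, to show that $h_\lambda(L_1)$ lands in a \emph{single} leaf $L_2$, the paper observes that the image under $h_\lambda$ of any path in $L_1$ is a path through a variation of horizontal geodesics and hence is everywhere tangential to the leaves, so Proposition~\ref{proptangentialcurve} forces it to stay in one leaf. Second, for surjectivity onto $L_2$ the paper runs an open--closed argument: if some $x\in L_2$ lay in the closure of $h_\lambda(L_1)$ but not in the image, the limiting horizontal geodesic would already start in $L_1$ and hit $N$, producing a preimage of $x$ after all. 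Your approach replaces both steps by the single observation that a foliation-preserving diffeomorphism of the tube whose inverse is again foliation-preserving must permute the leaves; this is slicker for surjectivity and avoids the limit argument entirely, at the price of relying implicitly on the globally constant leaf-distance (so that the target leaf $L'$ stays inside the domain of $h_{1/\lambda}$). Either way the substantive input is the same, namely Theorem~\ref{theotube} and Proposition~\ref{prophomotheties}.
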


\begin{proof} Consider a leaf $L_1.$ By Proposition \ref{prophomotheties}, $h_{\lambda}$ locally maps open sets of $L_1$ diffeomorphically to open sets of other leaves. The image $h_{\lambda}(c)$ of an arbitrary path $c: [0,1] \to L_1$ is tangential to the leaves as a path through the variation of horizontal geodesics defining $h_{\lambda}.$ Hence, $h_{\lambda}$ maps $L_1$ into one leaf $L_2.$ Further, $h_\lambda: L_1 \to L_2$ is injective.

Assume that there is $x \in L_2$ contained in the closure of $h_\lambda(L_1)$ but not in its image. Then there is a variation through horizontal geodesics whose endpoints in $L_2$ converge to $x.$ The limit curve is a horizontal geodesic $\gamma$ beginning in $L_1$ and meeting $N,$ too. Thus, $\gamma(0) \in L_1$ is a preimage of $x$ with respect to $h_{\lambda}$ which means that $h_{\lambda}(L_1)$ is open and closed in $L_2,$ hence equal to $L_2.$ 
\end{proof}

\begin{remark}
In \cite{at} and \cite{lth07} is shown (by completely different methods) that the reflection respects the foliation in general, i.e. without the assumption of no horizontally conjugate points. By Theorem \ref{theotube} we thus generalize: \emph{For a singular Riemannian foliation with properly embedded leaves, the homothetical transformations defined within global tubular neighborhoods are diffeomorphisms between the leaves.}
\end{remark}

\begin{remark}
Due to an essential hint of A. Lytchak, we can specify the last corollary even to the types of leaves, which is stated in the next proposition. Thus, if the foliation has no horizontally conjugate points, a situation as in Lemma \ref{lemacc} cannot occur which means: If there lie specificly regular points on a horizontal geodesic, singular points as well as exceptional points are isolated along this geodesic, and Lemma \ref{lemacctype} does not only hold in $\Sigma_{\rm ex}$ but in $M$ itself.
\end{remark}

\begin{proposition}\label{propiso} If $(M, \F)$ is a singular Riemannian foliation without horizontally conjugate points and with properly embedded leaves, then the homothetical transformations respect the types of leaves, i.e.~they map a leaf $N$ of type $a_N$ diffeomorphically onto a leaf of the same type.
\end{proposition}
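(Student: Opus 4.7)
The plan is to combine the observation that $h_\lambda$ commutes with projection onto its center with the covering characterization of the type from Lemma \ref{lemcovering}.

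Let $h_\lambda$ be a homothetical transformation centered at a leaf $M$, acting within a global tubular neighborhood ${\rm Tub}_\eps(M)$. Fix a leaf $N \subset {\rm Tub}_\eps(M)$ and set $N' := h_\lambda(N)$. Since $h_\lambda$ is defined by scaling the parameter along each horizontal geodesic $\exp^\perp(tv)$ with $v \in \nu M$, it preserves the foot of each such geodesic in $M$. Consequently, the orthogonal projection $\pi\colon {\rm Tub}_\eps(M) \to M$ satisfies $\pi \circ h_\lambda = \pi$.

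Suppose first that $M$ is regular (specifically regular or exceptional). By Corollary \ref{corcovering}, $h_\lambda$ restricts to a diffeomorphism $N \to N'$; coupled with $\pi \circ h_\lambda = \pi$, this makes $\pi|_N$ and $\pi|_{N'}$ equivalent coverings of $M$. In particular, $N$ and $N'$ have the same number of plaques in ${\rm Tub}_\eps(M)$. By Lemma \ref{lemcovering} (with the convention $a_L = 1$ for specifically regular $L$), this common covering degree equals $a_M/a_N$ and $a_M/a_{N'}$ respectively, whence $a_N = a_{N'}$.

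The remaining case, where the center $M$ is singular, is the principal technical obstacle: then $\pi|_N$ is a submersion with positive-dimensional fibers and Lemma \ref{lemcovering} does not apply verbatim. I would handle this by replacing $M$ with a specifically regular leaf $\tilde M$ close to the foot set $\pi(N)$---such $\tilde M$ exists by density of $\Sigma_{\rm triv}$ (Proposition \ref{propregdense})---and arguing, via the rigidity supplied by the absence of horizontally conjugate points (Proposition \ref{prophomotheties} together with Lemma \ref{lemjacobifield}), that the transformation $N \mapsto N'$ can be realized as $h_{\tilde\lambda}^{\tilde M}(N)$ for an appropriate parameter $\tilde\lambda$. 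The regular case then yields $a_N = a_{N'}$; verifying the existence of this factorization through a regular center, uniformly in the horizontal geodesic connecting $N$ to $N'$, is the crux of this step.
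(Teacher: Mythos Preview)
Your regular-center argument is correct and, if anything, more transparent than the paper's reduction via Lemma~\ref{lemacctype}: since $\pi\circ h_\lambda=\pi$ and $h_\lambda|_N\colon N\to N'$ is a diffeomorphism by Corollary~\ref{corcovering}, the coverings $\pi|_N$ and $\pi|_{N'}$ of the center have equal degree, and Lemma~\ref{lemcovering} converts this into $a_N=a_{N'}$.

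The gap is exactly where you locate it, and it is structural rather than a missing routine detail. For the reflection $h_{-1}^M$ at a \emph{singular} center $M$, your proposed factorization through a regular leaf $\tilde M$ is obstructed. Any regular $\tilde M$ has $M$ as a focal point by Lemma~\ref{lemfocalpoint}, hence $M\subset\mathcal C_{\tilde M}$ and the global tube radius satisfies $\eps_{\tilde M}\le d(\tilde M,M)$. If $\tilde M$ lies on the radial geodesic from $M$ through a point of $N$ (which it must, for $h_{\tilde\lambda}^{\tilde M}$ to move that point along the same geodesic), a direct computation gives $|\tilde\lambda|\cdot d(N,\tilde M)=d(N,M)+d(\tilde M,M)>\eps_{\tilde M}$, so $N$ falls outside the domain of $h_{\tilde\lambda}^{\tilde M}$. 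Thus the single-homothety factorization you sketch cannot exist; Proposition~\ref{prophomotheties} and Lemma~\ref{lemjacobifield} do not help here, since they concern how homotheties act on leaves, not how to enlarge their domains past focal points.

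The paper takes an entirely different route for this case. It invokes Theorem~1.7 of \cite{lth07} to identify the leaf space $M/\F$ with a good Riemannian orbifold (a quotient of a manifold by a discrete group $\Gamma$ of isometries), in such a way that the type stratification on $M$ corresponds to the isotropy stratification of the $\Gamma$-action. The reflection at a singular leaf then becomes a statement about the $\Gamma$-action on the covering manifold, where preservation of isotropy type is immediate. That external orbifold input is precisely what your sketch for the singular-center case is missing.
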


\begin{proof}
As the leaves are properly embedded, the foliation is closed. We can then use Theorem 1.7 of \cite{lth07} to see that the quotient $M / \F$ is isometric to a good Riemannian orbifold $N / \Gamma$ without conjugate points. The manifold $N$ and the discrete group $\Gamma$ are constructed in such a way that the stratification by types of leaves of $M$ determines the group $\Gamma$ of isometries of $N$ (for details see \cite{lth07}). Because of our Lemma \ref{lemacctype}, it only remains to check wether the reflection, i.e. the homothetical transformation $h_{-1},$ at a singular leaf respects the types of the neighboring leaves. And this is true on $M$ because it is an obvious fact on $N$ stratified by the action of $\Gamma.$
\end{proof}

We close this subsection by citing a lemma proven by Lytchak and Thorbergsson in \cite{lth07}, Subsection 4.5, which we will need later.

\begin{lemma}\label{lemtangentialjf}
Let $(M, \F)$ be a singular Riemannian foliation, \mbox{$\gamma: (-\infty, \infty) \to M$} a horizontal geodesic. Denote by $W^\gamma$  the space of \emph{horizontal Jacobi fields} along $\gamma,$ i.e.~all Jacobi fields that come from variations of $\gamma$ through horizontal geodesics in which all geodesics meet the same leaves at the same time. For every $t \in (-\infty, \infty),$ $W^\gamma$ then spans the tangential space of the leaf $N_t:$ $$\{J(t) \mid J \in W^\gamma \} = T_{\gamma(t)}N_{t}.$$
In particular, $J(\tilde t) \in T_{\gamma(\tilde t)}N_{\tilde t}$ for all $\tilde t.$
\end{lemma}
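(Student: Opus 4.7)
The plan is to prove both set-theoretic inclusions; the inclusion $\{J(t) \mid J \in W^\gamma\} \subseteq T_{\gamma(t)}N_t$ is essentially the definition of $W^\gamma$. If $f_s$ is a horizontal variation of $\gamma$ with $f_s(t)\in N_t$ for all $(s,t)$, then the curve $s\mapsto f_s(t)$ lies in $N_t$, so its velocity at $s=0$ lies in $T_{\gamma(t)}N_t$. I would record this once and then concentrate on the reverse inclusion.

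For the reverse inclusion, fix $t_0\in\R$ and $v\in T_{\gamma(t_0)}N_{t_0}$; the goal is to construct a horizontal variation $f_s$ of $\gamma$ such that $f_s(t)\in N_t$ for every $t$ and $\partial_s|_{s=0}f_s(t_0)=v$. I would first do this locally on the regular stratum using the local Riemannian submersion. Pick a simple neighbourhood $U$ around $\gamma(t_0)$ (or an analogous Molino chart if $\gamma(t_0)$ is singular) with quotient $\bar U$ and projection $\pi_U:U\to\bar U$; the transitivity axiom (I) lets one select a short curve $\alpha(s)\subset N_{t_0}\cap U$ with $\dot\alpha(0)=v$. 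For each $s$, take the unique horizontal lift of the geodesic $\bar\gamma=\pi_U\circ\gamma$ starting at $\alpha(s)$. These lifts are horizontal geodesics by the proposition on horizontal lifts in simple neighbourhoods, and because they all project onto the single geodesic $\bar\gamma$, the point $f_s(t)$ always lies in the fibre $\pi_U^{-1}(\bar\gamma(t))$, which coincides with $N_t\cap U$. The variation field is then a Jacobi field whose value at $t_0$ is $v$, and it is tangential to every leaf met inside $U$.

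Next I would propagate this local variation along the whole of $\gamma$. On the regular stratum this is a straightforward overlap argument: cover $\gamma|_{[-T,T]}$ by finitely many simple neighbourhoods; on each intersection the two locally defined variations agree because both families are horizontal lifts of $\bar\gamma$ with the same initial point in the fibre, and horizontal lifts are unique. To cross a singular time $t_1$, apply Molino's slice theorem to $N_{t_1}$: inside a distinguished tubular neighbourhood of $N_{t_1}$, the foliation is modelled by an orthogonal slice representation of a compact group action, and $\gamma$ itself is determined on a short interval about $t_1$ by its entry data at the slice. A variation of $\gamma$ through horizontal geodesics staying in the same leaves corresponds, in the linear model, to translating the initial data by the group action; this translation is precisely what the short-time variation already constructed on the adjacent regular piece prescribes, and it extends uniquely through $t_1$ by standard ODE arguments. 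Completeness of $M$ and the uniqueness of geodesics then extend the variation to all of $\R$.

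The main obstacle will be carrying the variation through singular times while simultaneously preserving two properties: each $f_s$ remains a horizontal geodesic, and $f_s(t)$ remains in the same leaf as $\gamma(t)$. The first property is a closed condition that survives any limit, but the second could fail if the family of variations degenerated across the singular stratum. The remedy is the linear slice model together with the homothetical structure from Proposition \ref{prophomotheties} (ensuring homothety-type invariance near singular leaves), which guarantee that a continuous extension of the variation across $t_1$ exists and automatically preserves the leaf-matching condition. Once this extension is established, $J(t):=\partial_s|_{s=0}f_s(t)$ is an everywhere-tangential Jacobi field lying in $W^\gamma$ with $J(t_0)=v$, proving $T_{\gamma(t_0)}N_{t_0}\subseteq\{J(t_0)\mid J\in W^\gamma\}$ and hence the lemma.
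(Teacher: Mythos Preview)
The paper does not actually prove this lemma; it merely cites it from \cite{lth07}, Subsection~4.5. So there is no in-paper argument to compare against, and your proposal has to stand on its own.

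Your overall architecture---easy inclusion by definition, then for the reverse inclusion build a leaf-preserving horizontal variation locally and propagate it along $\gamma$---is the natural one, and the regular-stratum part (horizontal lifts in simple neighbourhoods, patched by uniqueness) is fine. The difficulty, as you correctly identify, is crossing a singular time $t_1$, and here your argument has two genuine gaps.

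First, the slice model you invoke is the one for proper group actions, not for singular Riemannian foliations. In a distinguished tubular neighbourhood of a singular leaf the induced foliation on a slice is again a singular Riemannian foliation (Molino's infinitesimal foliation), but it is \emph{not} in general the orbit decomposition of a compact group, so ``translating the initial data by the group action'' is not available. Whatever mechanism carries the variation through $t_1$ must work at the level of the slice foliation itself.

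Second, you appeal to Proposition~\ref{prophomotheties} for the homothetical structure near the singular leaf, but that proposition is stated and proved only under the hypothesis that $(M,\F)$ has no horizontally conjugate points, whereas Lemma~\ref{lemtangentialjf} is asserted for arbitrary singular Riemannian foliations. Since the paper later uses Lemma~\ref{lemtangentialjf} in the proof of Theorem~\ref{theowhcp} (the characterisation of foliations without horizontally conjugate points), importing Proposition~\ref{prophomotheties} here would also risk circularity. The passage through singular leaves has to be argued with tools valid for general $(M,\F)$---essentially the local equidistance of plaques (Lemma~\ref{lemlocaltube}) together with the homothety/reflection results of \cite{at} or \cite{lth07} that hold without the extra hypothesis---rather than with Proposition~\ref{prophomotheties}.
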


\subsection{Singular points and focal points}\label{s:singfocal}

The preceding section already points at the possibility to characterise singular points locally by variations through horizontal geodesics which leads to the notion of focal points. In this chapter, we will now have a deeper view onto the relation of these attributes.

For the following, let $(M, \F)$ be a singular Riemannian foliation without horizontally conjugate points; let $\gamma$ be a horizontal geodesic in $M$ meeting the leaf $N_0$ at time $0.$ Furthermore, consider $N_0$ to be of maximal dimension along $\gamma.$ Then,

\begin{lemma}\label{lemfocalpoint}
$\gamma(t) \text{ is a focal point of }N_{0} \quad \Longleftrightarrow \quad \dim N_t < \dim N_{0}.$
\end{lemma}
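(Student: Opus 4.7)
The plan is to prove both implications by analyzing $N_0$-Jacobi fields along $\gamma$, exploiting Lemma~\ref{lemtangentialjf} for the forward direction and the no-horizontally-conjugate-points hypothesis (together with Lemma~\ref{lemjacobifield}) for the reverse.

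For the forward direction, assume $\dim N_t = l < k = \dim N_0$. Let $W^\gamma$ denote the space of horizontal Jacobi fields of Lemma~\ref{lemtangentialjf}. Every $J \in W^\gamma$ arises from a variation of $\gamma$ through horizontal geodesics that all meet $N_0$ at time $0$, so each varied geodesic is perpendicular to $N_0$ at its starting point in $N_0$; hence $J$ is an $N_0$-Jacobi field. The evaluation maps $\mathrm{ev}_s\colon W^\gamma \to T_{\gamma(s)}N_s$ are surjective, so $\dim W^\gamma \ge \dim N_0 = k$, while $\dim \mathrm{ev}_t(W^\gamma) \le l < k$. Thus $\ker \mathrm{ev}_t \subset W^\gamma$ has positive dimension, producing a non-zero $N_0$-Jacobi field $J$ with $J(t)=0$, so $\gamma(t)$ is focal.

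For the reverse direction I would argue the contrapositive: assume $\dim N_t = k$ (maximal) and let $J$ be a non-zero $N_0$-Jacobi field with $J(t) = 0$; I must force a contradiction. Since $0 \in T_{\gamma(t)}N_t$, the field $J$ is tangent at $t$, so if $N_t \neq N_0$ Definition~\ref{defwhcp} gives everywhere-tangency, and Lemma~\ref{lemjacobifield} upgrades $J$ to an $N_s$-Jacobi field for every $s$. In particular $J$ is an $N_t$-Jacobi field with $J(t)=0$, which forces $J'(t) \in \nu_{\gamma(t)}N_t$. On the other hand, by Proposition~\ref{prophomotheties}(i) every $s$ in a neighbourhood of $t$ has $\dim N_s = k$, so the tangent distribution $s \mapsto T_{\gamma(s)}N_s$ is a smooth family of $k$-planes there. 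Writing $J(s) = J'(t)(s-t) + O((s-t)^2)$ and dividing by $s-t$, the inclusion $J(s) \in T_{\gamma(s)}N_s$ passes to the limit to give $J'(t) \in T_{\gamma(t)}N_t$. Combining, $J'(t) \in T_{\gamma(t)}N_t \cap \nu_{\gamma(t)}N_t = \{0\}$, so $J(t) = J'(t) = 0$ and $J \equiv 0$, the desired contradiction.

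The main obstacle is the degenerate case $N_t = N_0$ with $t \neq 0$, where Definition~\ref{defwhcp} does not immediately apply because $J$ is a priori only tangent to the original leaf $N$. Here I would use that $\gamma$ cannot lie inside $N_0$ (horizontality would force $\dot\gamma(0) \in T_{\gamma(0)}N_0 \cap \nu_{\gamma(0)}N_0 = 0$) and that $\gamma \cap N_0$ is a discrete set (since $N_0$ is properly embedded and $\gamma$ hits it transversally). Thus for $s$ arbitrarily close to $t$ we have $N_s \neq N_0$, and the argument of the previous paragraph runs with any such $s$ in place of $t$ once one checks, via the reversed geodesic $s \mapsto \gamma(t-s)$ based at $\gamma(t) \in N_0$, that $J'(t) \in \nu_{\gamma(t)}N_0$ (so that the reversed field is $N_0$-Jacobi and no-h.c.p. applies). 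This is the only delicate book-keeping step; once $J'(t) \in \nu N_t$ is established in this boundary case, the tangent-distribution limit argument closes the proof exactly as before.
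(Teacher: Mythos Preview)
Your argument is correct and takes a genuinely different route from the paper's. For ``dimension drop $\Rightarrow$ focal'' the paper recalls the explicit $(k-l)$-dimensional variation of horizontal geodesics built in the proof of Proposition~\ref{prophomotheties}, while you use a clean rank--nullity count on the space $W^\gamma$ of Lemma~\ref{lemtangentialjf}; your version is shorter, the paper's gives the multiplicity $(k-l)$ directly, which is what is needed next in Proposition~\ref{propsingfocal}. The real divergence is in ``focal $\Rightarrow$ dimension drop''. The paper works with the focal \emph{variation} $f(s,\tilde t)$ itself, uses no-h.c.p.\ to see that $s\mapsto f(s,\tilde t)$ stays in $N_{\tilde t}$ for every $\tilde t$, and then derives a geometric contradiction: if $\dim N_t$ were maximal, then inside a small tubular neighbourhood the leaf $N_{t+\eps}$ would meet the normal sphere $S_\eps^{\gamma(t)}$ in only finitely many points, against the continuum of intersection points produced by the non-trivial variation. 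You stay entirely infinitesimal: Lemma~\ref{lemjacobifield} makes $J$ an $N_t$-Jacobi field, forcing $J'(t)\in\nu N_t$, while the smoothness of the leaf-tangent distribution near $t$ (all nearby leaves having dimension $k$) gives $J'(t)\in T_{\gamma(t)}N_t$, hence $J\equiv 0$. The paper's approach feeds straight into the variation picture used downstream; yours is more self-contained and avoids passing to the variation level.

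Two minor remarks. First, your ``forward'' and ``reverse'' labels are swapped relative to the displayed statement. Second, your treatment of the degenerate case $N_t=N_0$ is not quite closed --- the assertion $J'(t)\in\nu_{\gamma(t)}N_0$ is what you want to conclude, not something you have available a priori from the reversed geodesic. Note, however, that the paper's proof invokes no-h.c.p.\ at exactly the same step (``tangential in $\gamma(0)$ and in $\gamma(t)$, hence everywhere'') without isolating this case either; under the informal restatement of horizontally conjugate points given just after Definition~\ref{defwhcp}, which does not require $N_p\neq N_q$, the issue dissolves for both arguments.
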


\begin{proof}
$\Leftarrow$: This can be seen in the proof of proposition \ref{prophomotheties}: For a leaf $N_{t + \varepsilon}$ near $\gamma(t),$ we find a $(k-l)$-dimensional variation of horizontal geodesics starting in $\gamma(t)$ that is everywhere tangential. A singular point on $\gamma$ thus is a focal point to every leaf of higher dimension met by $\gamma.$

$\Rightarrow$: Let $f(s,\tilde t)$ denote a non-trivial variation of horizontal geodesics starting in $N_{0}$ and meeting the leaf $N_t$ in $\gamma(t).$ The corresponding Jacobi field is tangential in $\gamma(0)$ and in $\gamma(t)$, hence everywhere. Therefore, $f(s,\tilde t)$ is an $N_{\tilde t}$-variation for every leaf met by $\gamma$ at time $\tilde t$ with focal point $\gamma(t),$ if $N_{\tilde t}$ is of higher dimension than $N_t.$ Again, take a tubular neighborhood of $\gamma(t) \in N_t$ so small that there exist unique geodesics between arbitrary points, and let $\eps$ be such that $\gamma(t + \eps)$ is contained in it. Then, $f(s, t + \eps) \in S_{\eps}^{\gamma(t)}$ for all $s,$ where $S_{\eps}^{\gamma(t)}$ denotes the normal sphere of radius $\eps$ around $\gamma(t).$ Assuming $N_{t}$ to have maximal dimension now leads to a contradiction: The leaf $N_{t + \eps}$ must have the same dimension and $$N_{t + \eps} \cap S_{\eps}^{\gamma(t)} = \text{ finitely many points}$$ while the variation is non-trivial and its geodesics are unique, so $\dim N_t < \dim N_{0}.$
\end{proof} 

\begin{remark}
In a singular Riemannian foliation (without additional conditions), singular points are easily seen to be focal points of leaves nearby. But in general, there are focal points in regular leaves, too! A trivial example is the sphere $S^2$ with the standard metric, foliated by points. Geodesics are always horizontal, and conjugate points are horizontally conjugate. Obviously, they are focal points to each other, and regular by definition.
\end{remark}

With regard to the last proof and the proof of Proposition \ref{prophomotheties}, we can improve Lemma \ref{lemfocalpoint} as follows.

\begin{proposition}\label{propsingfocal}
For every singular point $p$ in $M$ and every regular leaf $N$ there is a geodesic  $\gamma: [0,1] \to M$ with $\gamma(0) =: q \in N, \gamma(1) = p,$ that is perpendicular to $N = N_q,$ and $p$ is a focal point of $N_q$ along $\gamma$ of multiplicity
$$\nu(p) = {\rm dim}(N_q)-{\rm dim}(N_p).$$
In particular, the multiplicity of $p$ is independent of the choice of $\gamma.$
\end{proposition}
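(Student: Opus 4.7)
The plan has three stages: find a horizontal minimising geodesic $\gamma$ from $N$ to $p$; invoke Lemma \ref{lemfocalpoint} to see $p$ is focal; and compute the multiplicity by relating the space of focal Jacobi fields to $\mathcal{K}$.

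Existence of $\gamma$ is standard. Since $N$ is properly embedded it is closed in $M$, and completeness of $M$ ensures that the distance $d(p,N)$ is attained at some $q\in N$ by a minimising geodesic $\gamma:[0,1]\to M$ from $q$ to $p$. The first variation of arclength forces $\dot\gamma(0)\perp T_qN$, hence $\gamma$ is horizontal. Now $N=N_q$ is regular, so $\dim N_q=k$ is the globally maximal leaf dimension and in particular maximal along $\gamma$. As $p$ is singular, $l:=\dim N_p<k$, and Lemma \ref{lemfocalpoint} asserts that $p$ is a focal point of $N_q$ along $\gamma$.

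Write $V:=\{J \mid J\text{ is an }N_q\text{-Jacobi field along }\gamma\text{ with }J(1)=0\}$, so $\nu(p)=\dim V$ by definition of focal multiplicity. I will establish the identities $V=\ker(\mathrm{ev}_1\colon\mathcal{K}\to T_pN_p)$ and $\dim\mathcal{K}=k$; together with the surjectivity of $\mathrm{ev}_1$ supplied by Lemma \ref{lemtangentialjf}, these yield $\dim V=k-l$. For $V\subseteq\mathcal{K}$: any $J\in V$ satisfies $J(0)\in T_qN_q$ and $J(1)=0\in T_pN_p$ with $N_p\neq N_q$, so by the no-horizontally-conjugate-points hypothesis $J$ is everywhere tangential, and Lemma \ref{lemjacobifield} places $J$ in $\mathcal{K}$. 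The reverse inclusion $\ker\mathrm{ev}_1|_\mathcal{K}\subseteq V$ is immediate since every element of $\mathcal{K}$ is in particular an $N_q$-Jacobi field. Independence of $\gamma$ then follows from the final formula $\nu(p)=k-l$, which only involves $p$ and $N$.

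The technical core is the equality $\dim\mathcal{K}=k$, equivalently the bijectivity of $\mathrm{ev}_0\colon\mathcal{K}\to T_qN_q$. Surjectivity is Remark \ref{Prop3.1}: the construction there, applied with $t_0=0$, produces for each $v\in T_qN_q$ an everywhere tangential $N_q$-Jacobi field $J$ with $J(0)=v$, which lies in $\mathcal{K}$ by Lemma \ref{lemjacobifield}. For injectivity, suppose $J\in\mathcal{K}$ with $J(0)=0$; the $N_q$-Jacobi condition gives $J'(0)\in\nu_qN_q$. On the other hand, Proposition \ref{prophomotheties}(i) gives $\dim N_t=k$ for all sufficiently small $t>0$, and $J(t)\in T_{\gamma(t)}N_t$, a $k$-plane. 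As $t\to 0^+$ the quotient $J(t)/t$ tends to $J'(0)$, while the tangent distribution of the regular plaques varies continuously inside the tubular neighbourhood of $N_q$ (by the local description of Lemma \ref{lemlocaltube}), so $T_{\gamma(t)}N_t\to T_qN_q$ in the Grassmannian. Hence $J'(0)\in T_qN_q\cap\nu_qN_q=\{0\}$, forcing $J\equiv 0$. This Grassmannian limit is the delicate step; everything else is straightforward assembly of the lemmas already in place.
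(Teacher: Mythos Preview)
Your proof is correct and takes a more algebraic route than the paper's. The paper argues geometrically: it takes the $(k-l)$-dimensional variation of horizontal geodesics through $p$ built in the proof of Proposition~\ref{prophomotheties}, extends it back to $N_q$, and observes that if it collapsed there, $q$ would be a focal point of the nearby regular leaf $N_{t+\eps}$, contradicting Lemma~\ref{lemfocalpoint}. You instead work with the space $\mathcal{K}$ and the evaluation maps at both endpoints, reducing the multiplicity computation to a rank--nullity statement. Your route makes the upper bound $\nu(p)\le k-l$ explicit (it falls out of $\dim\mathcal{K}=k$ together with the surjectivity of $\mathrm{ev}_1$), whereas the paper's terse proof leaves that direction implicit in the phrase ``maximal dimension''. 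Conversely, the paper avoids your Grassmannian-limit step. That step can in fact be replaced by a direct appeal to Lemma~\ref{lemfocalpoint}: if $J\in\mathcal{K}$ is non-trivial with $J(0)=0$, then for any regular $t$ the field $J$ is an $N_t$-Jacobi field vanishing at $0$, so $q$ is a focal point of the regular leaf $N_t$, forcing $\dim N_q<k$, a contradiction. This is precisely the contradiction the paper invokes, so at bottom both arguments rest on the same fact. Finally, note that your computation nowhere uses that $\gamma$ is minimising, so the formula $\nu(p)=k-l$ holds for \emph{every} horizontal geodesic from $N$ to $p$; this is what actually justifies the independence claim, and you should say so rather than only pointing to the form of the right-hand side.
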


\begin{proof}
We just drop a perpendicular from $p$ to $N;$ let \mbox{$\gamma: [0,1] \to M$} be a geodesic beginning in $q = \gamma(0)$ perpendicularly to $N = N_q,$ ending in $p.$ As in the proof of Lemma \ref{lemfocalpoint}, there is a local variation of  $\gamma$ of maximal dimension $\nu(p)$ which can be extended up to the leaf $N_q.$ If this variation was not $\nu(p)$-dimensional in $N_q$ the point $q$ would be a focal point along $\gamma$ (namely a focal point of the leaf $N_{t + \eps}$ in the notation used above) which contradics the regularity of $N_q.$
\end{proof}

Combining the latest findings with the general form of Proposition \ref{prophomotheties} part (i) as proven in \cite{lth07} and \cite{at}, yields the main theorem of this chapter, the geometric characterisation of foliations without horizontally conjugate points. 

\begin{theorem}\label{theowhcp}
A singular Riemannian foliation has no horizontally conjugate points if and only if the singular points are exactly the focal points of leaves.
\end{theorem}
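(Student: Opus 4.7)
The biconditional splits into two implications; the forward one is a clean combination of Proposition \ref{propsingfocal} with the Jacobi-field machinery assembled in this chapter, and the reverse one proceeds by contraposition.

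\emph{Direction $\Rightarrow$ (no hcp $\Rightarrow$ singular $=$ focal).} Proposition \ref{propsingfocal} already supplies the implication singular $\Rightarrow$ focal, so only the converse needs attention. I would take a focal point $p = \gamma(t_0)$ of a leaf $N$ along a horizontal geodesic $\gamma$ with $\gamma(0) \in N$ perpendicular, and pick a non-trivial $N$-Jacobi field $J$ along $\gamma$ vanishing at $t_0$. Both $J(0) \in T_{\gamma(0)} N$ and $J(t_0) = 0$ are tangential to their respective leaves, so the no-hcp hypothesis forces $J$ to be everywhere tangential, and Lemma \ref{lemjacobifield} then upgrades this to $J \in \mathcal K$. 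Using Proposition \ref{prophomotheties}(i), I select $s_1 \ne t_0$ at which $\dim N_{s_1} = m := \max_s \dim N_{\gamma(s)}$ (possible since the maximum is attained outside an isolated set of times). Since $J \in \mathcal K$, it is also a non-trivial $N_{s_1}$-Jacobi field, and it still vanishes at $t_0$; thus $p$ is a focal point of the max-dimensional leaf $N_{s_1}$ along the reparametrised geodesic, and Lemma \ref{lemfocalpoint} yields $\dim N_{t_0} < \dim N_{s_1} = m \le k$, making $p$ singular.

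\emph{Direction $\Leftarrow$ (singular $=$ focal $\Rightarrow$ no hcp).} I argue contrapositively, using the general form of Proposition \ref{prophomotheties}(i) from \cite{at, lth07} (valid without the no-hcp assumption) together with the tangential-Jacobi-field representation $T_{\gamma(t)} N_t = \{\tilde J(t) \mid \tilde J \in \mathcal K\}$ from Proposition 3.1 of \cite{lth} (cf.\ Remark \ref{Prop3.1}). Given hcp $-$ a horizontal geodesic $\gamma$ together with a non-trivial $N_0$-Jacobi field $J$ such that $J(0) \in T N_0$, $J(t_1) \in T N_{t_1}$ for some $t_1 \ne 0$, but $J$ not everywhere tangential $-$ I would pick $J_0, J_1 \in \mathcal K$ with $J_0(0) = J(0)$ and $J_1(t_1) = J(t_1) - J_0(t_1)$. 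Then $\hat J := J - J_0 - J_1$ is an $N_0$-Jacobi field that vanishes at $t_1$ and is non-trivial, because otherwise $J = J_0 + J_1$ would lie in $\mathcal K$ and hence be everywhere tangential, contradicting the choice of $J$. Therefore $\gamma(t_1)$ is a focal point of $N_0$, and by the hypothesis singular $=$ focal it must be singular.

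\emph{Main obstacle.} The singular conclusion for $\gamma(t_1)$ is consistent with, not yet contradictory to, the hypothesis. Closing the argument requires that the hcp configuration can always be arranged with $\gamma(t_1)$ regular, and this is precisely where the general Proposition \ref{prophomotheties}(i) enters: singular points along any horizontal geodesic form an isolated set. The expected step is to perturb $t_1$ into the open dense set of regular times, or replace $\gamma$ by a nearby horizontal geodesic, while preserving the tangential conditions on $J$ at the endpoints via the continuity of the $\mathcal K$-representation; producing a focal point at a regular leaf then contradicts singular $=$ focal. Making this transversality/perturbation argument rigorous is the most delicate part of the proof, since the hcp definition does not on its own allow free motion of $t_1$.
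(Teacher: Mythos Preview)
Your forward direction is correct and matches the paper's use of Lemma~\ref{lemfocalpoint}. For the reverse direction you have identified the right obstacle but not the right way around it, and there is also a citation problem.

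\emph{Citation issue.} In the contrapositive you invoke the representation $T_{\gamma(t)}N_t=\{\tilde J(t)\mid \tilde J\in\mathcal K\}$ from Proposition~3.1 of \cite{lth}. That proposition is proved under the no-hcp hypothesis, which is exactly what you are assuming \emph{fails} here. The paper avoids this circularity by using Lemma~\ref{lemtangentialjf} instead: the space $W^\gamma$ of horizontal Jacobi fields spans $T_{\gamma(t)}N_t$ for any singular Riemannian foliation, and $W^\gamma\subset\mathcal K$, so the subtraction step goes through once you cite the correct source.

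\emph{The perturbation gap.} Your proposal to slide $t_1$ into the regular set does not work: $t_1$ is the time at which the given $N_0$-Jacobi field $J$ happens to be tangential, and there is no mechanism that keeps $J(t_1')\in T_{\gamma(t_1')}N_{t_1'}$ for nearby $t_1'$. The paper's resolution is quite different and involves two additional ingredients you do not mention. First, after subtracting an everywhere-tangential field $\tilde J\in W^\gamma$ with $\tilde J(t_0)=J(t_0)$ (your $t_1$ is the paper's $t_0$), one obtains an $N_0$-Jacobi field $J-\tilde J$ vanishing at $t_0$; Wilking's result from \cite{wilking} then forces $(J-\tilde J)'(t_0)\in\nu_{\gamma(t_0)}N_{t_0}$, so $J-\tilde J$ is in fact an $N_{t_0}$-Jacobi field. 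A second subtraction (again via Lemma~\ref{lemtangentialjf}) produces a non-trivial $N_{t_0}$-Jacobi field vanishing at $0$, whence $\gamma(0)$ itself is focal, hence singular. Second, the perturbation is carried out not on $t_1$ but on the \emph{starting time}, via the Morse--Sturm index-form argument of Lemma~\ref{lemdisturb}: continuity of the index form in the endpoints guarantees a horizontally conjugate pair based at $\gamma(\delta)$ for every small $\delta>0$. Reapplying the Wilking step to each such pair makes $\gamma(\delta)$ singular for all small $\delta$, contradicting the isolatedness of singular points along $\gamma$. This index-theoretic perturbation is the key idea missing from your outline.
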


To prove the if-part, we will need the following technical lemma that holds under the assumption of singular points being the focal points. 

\begin{lemma}\label{lemdisturb}
Let $\gamma: (- \infty, \infty) \to M$ be a horizontal geodesic. If we have $a < b$ such that $\gamma(b)$ is horizontally conjugate to $\gamma(a)$ along $\gamma,$ for every $\eps > 0$ (sufficiently small) there is a $\delta > 0$ and $c \in (a + \delta, b + \eps)$ such that $\gamma(c)$ is horizontally conjugate to $\gamma(a + \delta).$ 
\end{lemma}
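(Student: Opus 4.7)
The plan is to perturb the witness Jacobi field for the conjugacy of $\gamma(a)$ and $\gamma(b)$ into a witness for a nearby pair $(a+\delta,c)$. Let $J$ denote the given $N_a$-Jacobi field along $\gamma$, tangential to the leaves at $a$ and $b$ but not everywhere tangential. By Lemma~\ref{lemtangentialjf} the evaluation $K\mapsto K(a)$ surjects $W^\gamma\to T_{\gamma(a)}N_a$, so I pick $K\in W^\gamma$ with $K(a)=J(a)$. Since every element of $W^\gamma$ is an $N_t$-Jacobi field for every $t$, the difference $J-K$ remains an $N_a$-Jacobi field; it vanishes at $a$, is tangential at $b$ (both summands are), and is not everywhere tangential (because $K$ is and $J$ is not). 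Replacing $J$ by $J-K$, I may assume $J(a)=0$ and $J'(a)\in\nu_{\gamma(a)}N_a$.

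Next I construct the perturbed Jacobi field. Using the general form of Proposition~\ref{prophomotheties}(i), proven in \cite{lth07,at}, the maximal-dimension points along $\gamma$ form an open dense subset, so for small $\delta>0$ I may take $\gamma(a+\delta)$ regular. Let $P_\delta$ denote parallel transport along $\gamma$ from $\gamma(a)$ to $\gamma(a+\delta)$ and let $w_\delta\in\nu_{\gamma(a+\delta)}N_{a+\delta}$ be the orthogonal projection of $P_\delta J'(a)$ onto the normal space. Define $J_\delta$ as the Jacobi field along $\gamma$ satisfying $J_\delta(a+\delta)=0$ and $J_\delta'(a+\delta)=w_\delta$; it is automatically an $N_{a+\delta}$-Jacobi field, and continuous dependence of the Jacobi ODE on initial data yields $J_\delta\to J$ uniformly on $[a,b+1]$ as $\delta\to 0$.

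To obtain $c_\delta$, I first arrange---possibly shrinking the pair to a sub-pair where $b$ is the last tangential time, and using Proposition~\ref{propsingfocal} to displace $b$ slightly if it were singular---that $\gamma(b)$ is regular and that the zero of the leaf-normal component of $J$ at $t=b$ is transversal in $t$. Precisely: in a neighborhood of $b$ the subbundle $T_{\gamma(t)}N_t$ is smooth, so the function $F(\delta,t):=\Pi_{\nu_{\gamma(t)}N_t}J_\delta(t)$ is continuous, vanishes at $(0,b)$, and satisfies $\partial_t F(0,b)\neq 0$. The implicit function theorem yields a continuous $c_\delta\to b$ with $F(\delta,c_\delta)=0$, i.e.\ $J_\delta(c_\delta)\in T_{\gamma(c_\delta)}N_{c_\delta}$; for $\delta$ small enough, $c_\delta\in(a+\delta,b+\eps)$. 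Finally, since $J$ is not everywhere tangential, there is a regular $t^*\in(a,b)$ with $J(t^*)\notin T_{\gamma(t^*)}N_{t^*}$; uniform convergence preserves this for small $\delta$, so $J_\delta$ is a bona fide HCP witness for $\gamma(a+\delta)$ and $\gamma(c_\delta)$.

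The principal obstacle is the transversality step: the subbundle $T_{\gamma(t)}N_t$ lives in a bundle whose fiber dimension can jump at the isolated singular times along $\gamma$, so the implicit function argument demands the preliminary reduction to a regular endpoint with a clean crossing. The hypothesis that singular points are exactly focal points enters precisely here, through Proposition~\ref{propsingfocal}, by providing via the focal-point structure the flexibility to adjust $b$ and to guarantee transversality after the reduction.
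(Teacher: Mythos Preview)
Your strategy differs fundamentally from the paper's, which does \emph{not} perturb a single Jacobi field but instead passes to Wilking's transversal Jacobi equation and uses continuity of the Morse--Sturm index form $I_{a,b}$ in its endpoints: since $I_{a,b+\eps}$ is negative on a subspace $\mathcal U$, so is $I_{a+\delta,b+\eps}$ for small $\delta$, and the index theorem then forces a horizontally conjugate point in $(a+\delta,b+\eps)$. That argument never singles out one Jacobi field and therefore never needs a transversality hypothesis.

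Your implicit function step is where the argument breaks. The map
\[
F(\delta,t)=\Pi_{\nu_{\gamma(t)}N_t}J_\delta(t)
\]
takes values in the normal space $\nu_{\gamma(t)}N_t$, which has dimension $n-k$ equal to the codimension of the regular leaves. Thus $F=0$ is a system of $n-k$ equations in the two unknowns $(\delta,t)$. The implicit function theorem produces a curve $t=c_\delta$ only when $\partial_tF(0,b)$ is an isomorphism onto the target, which forces $n-k=1$. For foliations of codimension at least two there is no reason whatsoever for the perturbed field $J_\delta$ to become tangential at \emph{any} nearby time; tangentiality of $J$ at $b$ is a codimension-$(n-k)$ coincidence that a one-parameter perturbation will generically destroy. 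Your reduction ``to a regular endpoint with a clean crossing'' does not address this, and Proposition~\ref{propsingfocal} concerns multiplicities of focal points, not transversality of the normal component of a fixed Jacobi field.

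Even in the codimension-one case the claimed transversality $\partial_tF(0,b)\neq 0$ is asserted rather than proved; a tangential zero of higher order is not excluded by anything you cite. The index-form argument in the paper sidesteps all of this precisely because negativity of a quadratic form on a subspace is an open condition, whereas the vanishing of a vector-valued function is not.
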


\begin{proof}
We define $\mathcal T := \{J \mid J \text{ is a Jacobi field along }\gamma, J(\tilde t) \in T_{\gamma(\tilde t)}N_{\tilde t}\ \forall_{\tilde t}\}.$ In analogy to \cite{wilking} and \cite{lth}, we can define a bundle $\mathcal V_\gamma$ over $\mathbf R$ by the following extension of $\mathcal T$ at singular points:
$$\mathcal V_t = \{J(t) \mid J \in \mathcal T\} \oplus \{J'(t) \mid J \in \mathcal T, J(t) = 0\}.$$
Then, $\{J'(t) \mid J \in \mathcal T, J(t) = 0\} = 0$ if $t$ is regular by the assumption that singular points are focal points. As we know the latter to be isolated along $\gamma,$ we see that $\mathcal V_\gamma$ is a smooth bundle, and $\dim (\mathcal V_t) = \dim (T_{\gamma(t_0)}N_{t_0})$ for all $t$ wherby $\gamma(t_0)$ is regular (see Proposition 3.2 in \cite{lth}). Define the horizontal part $\mathcal H_t$ to be the orthogonal complement of $\mathcal V_t$ in $T_{\gamma(t)}N_{t}$ and denote the bundle by $\mathcal H_\gamma.$

Wilking showed that there is an associated Morse Sturm differential equation to $\mathcal H_\gamma$ in $\mathbf R^k,$ such that its space of solutions $\mathcal L$ determines just the horizontal Jacobi fields along $\gamma$ which vanish in $a.$ By the generalized index theorem of Morse, the corresponding index form $$I_{a,b}(V, W) =\int_a^b \left< V', W'\right> - \left<\mathcal R(t)V, W \right> dt$$ has $${\rm Ker}(I_{a,b}) = \{J|_{[a,b]} \mid J \in \mathcal L, J(b) = 0\}\quad \text{and}\quad {\rm Ind}(I_{a, b}) = \sum_{t \in (a,b)} \nu(t).$$ 
Here, $V, W$ lie in $H_0^1([a, b], \mathbf R^k),$ the space of absolutely continuous, ${\mathbf R^n}$-valued maps of $[a,b]$ with square integrable derivative and boundary condition $V(a) = V(b) = 0$; $\mathcal R$ is continuous and $\mathcal R (t)$ is self-adjoint for all $t$ (here, we use the notation of Lytchak and Thorbergsson in \cite{lth} which we refer to for details).

Without loss of generality let $\gamma(b)$ be the first conjugate point to $\gamma(a)$. For every $\eps > 0$ sufficiently small, let $\mathcal U \subset H_0^1([a, b+ \eps], \mathbf R^k)$ be a maximal subspace on which the index form is negative definite
$$I_{a , b + \eps}(V,V) < 0\ \text{ for all } V \in \mathcal U.$$
The Morse Sturm differential equation and thus the index form depend continuously on the end points of the interval. Beyond, there is a neighborhood around $N_{\gamma(a)}$ that contains leaves of same or higher dimension only, so by the index theorem the following holds for all $\delta > 0$ sufficiently small: $$I_{a + \delta , b + \eps}(\tilde V,\tilde V) < 0\; \text{ for all } \tilde V \in \tilde{\mathcal U},$$ where $\tilde{\mathcal U} \subset H_0^1([a + \delta, b+ \eps], \mathbf R^k)$ with boundary condition $V(a+ \delta) = V(b + \eps) = 0$ is a subspace of dimension $\dim(U).$ Thus, continuity implies
$$\hspace{2cm}\exists_{c \in (a + \delta, b+ \eps)} : \gamma(c) \text{ is horizontally conjugate to } \gamma(a + \delta).\hspace{2cm}(*)$$ 
\end{proof}

\begin{remark}
We can state $(*)$ more precisely: For $\gamma(a + \delta)$ there is a horizontally conjugate point $\gamma(c)$ with $c \in [b, b + \eps).$

By assumption, there is no horizontally conjugate point between $\gamma(a)$ and $\gamma(b).$ For all $\tilde \eps > 0$ and all $W \in H_0^1([a, b- \tilde \eps], \mathbf R^k),$ we therefore have $I_{a, b - \tilde\eps}(W, W) > 0.$ If $\delta$ in $(*)$ is chosen to be sufficiently small, continuity implies $I_{a + \delta, b - \tilde\eps}(W, W) > 0.$ Hence, $\gamma(a + \delta)$ has no horizontally conjugate point on $\gamma|_{[a + \delta, b)}.$
\end{remark}
\medskip

\noindent \emph{Proof of Theorem \ref{theowhcp}.}
With regard to Lemma \ref{lemfocalpoint} it remains to show that if focal points are singular, then the foliation is without horizontally conjugate points. We prove this by contradiction.

Assume that there are horizontally conjugate points, i.e. there is a horizontal geodesic $\gamma: [0,1] \to M,$ points $t_0, t_1 \in [0,1]$ and a $N_{0}$-Jacobi field $J$ along $\gamma$ such that $J(t_0) \in T_{\gamma(t_0)}N_{t_0},$ but $J(t_1) \notin T_{\gamma(t_1)}N_{t_1}.$ By Lemma \ref{lemtangentialjf}, there exists a $N_0$-Jacobifeld $\tilde J$ along $\gamma$ that is everywhere tangential and $\tilde J (t_0) = J(t_0).$ The difference $J - \tilde J$ is a $N_0$-Jacobi field that vanishes in $t_0$ but not identically. Hence, $\gamma(t_0)$ is a focal point of $N_0.$ 

Wilking shows in \cite{wilking}, Section 3, that then $(J - \tilde J)'(t_0) \in \nu_{\gamma(t_0)}N_{t_0}$, i.e. $J - \tilde J$ is a $N_{t_0}$-Jacobi field. Now, use lemma \ref{lemtangentialjf} again: There is $\hat J$ such that $J - \tilde J - \hat J$ is a non-trivial $N_{t_0}$-Jacobi field vanishing in $0.$ Hence, $\gamma(0)$ is a focal point of $N_0$ and thus singular.

By Lemma \ref{lemdisturb}, we can disturb horizontally conjugate points, so they are not isolated. But for every $\delta > 0$ (notation as in \ref{lemdisturb}), $\gamma(0)$ as well as $\gamma(0 + \delta)$ lie in singular leaves and thus have to be isolated along $\gamma,$ a contradiction!\qed

\begin{corollary}\label{korbolton}
A singular Riemannian foliation with regular leaves of codimension one has no horizontally conjugate points.
\end{corollary}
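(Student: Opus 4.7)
My plan is to apply Theorem \ref{theowhcp}: it suffices to show that in any singular Riemannian foliation whose regular leaves have codimension one, the set of singular points of $M$ coincides with the set of focal points of leaves. Both inclusions will follow from one and the same Jacobi field decomposition along horizontal geodesics, built out of Lemma \ref{lemtangentialjf}.

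Fix a horizontal geodesic $\gamma$ meeting a leaf $L$ at time $0$ and passing through at least one regular point of $M$. I claim that the space $W^\gamma$ of Lemma \ref{lemtangentialjf} is an $(n-1)$-dimensional subspace of the $n$-dimensional space of $L$-Jacobi fields along $\gamma$, with complement $\R\cdot t\dot\gamma(t)$. For the inclusion, any $J_W\in W^\gamma$ arises from a variation $f(s,t)$ through horizontal geodesics all meeting $L$ at time $0$, so $J_W(0)\in T_{\gamma(0)}L$, and the Weingarten-type identity for the normal section $s\mapsto \partial_t f(s,0)$ of $\nu L$ yields $J_W'(0)+A_{\dot\gamma(0)}J_W(0)\in\nu L$. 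The radial field $t\dot\gamma(t)$ is also an $L$-Jacobi field, and since $\gamma$ is horizontal it is nowhere tangential for $t\neq 0$, hence not in $W^\gamma$. The codimension one hypothesis enters precisely here: at a regular point $\gamma(t_\ast)$ we have $\dim T_{\gamma(t_\ast)}N_{\gamma(t_\ast)}=n-1$, and Lemma \ref{lemtangentialjf} together with linear independence of $t\dot\gamma(t)$ forces $\dim W^\gamma=n-1$ and $W^\gamma\oplus\R\cdot t\dot\gamma(t)$ to exhaust the entire space of $L$-Jacobi fields.

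For singular $\subseteq$ focal: given a singular $p$ in a leaf of dimension $l<n-1$, pick a nearby regular leaf $N$ and a horizontal geodesic $\gamma$ from $\gamma(0)\in N$ to $\gamma(1)=p$; applying the previous step to $L=N$ (with regular endpoint $\gamma(0)$), the map $\mathrm{ev}_{\gamma(1)}:W^\gamma\to T_pL$ has image of dimension at most $l<n-1=\dim W^\gamma$, so its kernel is nonzero and provides a nonzero $N$-Jacobi field in $W^\gamma$ that vanishes at $p$, exhibiting $p$ as a focal point of $N$. For focal $\subseteq$ singular: suppose $p=\gamma(1)$ is regular and $J$ is an $L$-Jacobi field with $J(1)=0$. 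Decompose $J=J_W+c\cdot t\dot\gamma(t)$; then $J_W(1)=-c\dot\gamma(1)$ equates a vector tangential to $N_p$ with a normal one (since $\gamma$ is horizontal), so both vanish, and since $\mathrm{ev}_{\gamma(1)}:W^\gamma\to T_{\gamma(1)}N_p$ is a bijection between $(n-1)$-dimensional spaces, we conclude $J=0$ and $p$ is not a focal point. Applying Theorem \ref{theowhcp} finishes the argument; the main technical hurdle I anticipate is the Weingarten verification that $W^\gamma\subseteq L$-Jacobi fields, while the codimension one hypothesis enters exactly and only once, in the bound $\dim W^\gamma=n-1$, and fails for codimension $\ge 2$ — which matches Bolton's original cohomogeneity one setting.
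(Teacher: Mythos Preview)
Your argument is correct and reaches the same conclusion as the paper, but by a genuinely different route. The paper's proof is a one-line citation: Bolton has already shown that in a codimension-one singular Riemannian foliation the critical points of $\exp_N^\perp$ are exactly the singular points, so Theorem~\ref{theowhcp} applies immediately. You instead \emph{reprove} Bolton's statement from the paper's own machinery, namely Lemma~\ref{lemtangentialjf}, via the decomposition of the $n$-dimensional space of $L$-Jacobi fields as $W^\gamma\oplus\R\cdot t\dot\gamma(t)$. This is a nice self-contained alternative: it avoids the external reference (which, incidentally, is cited in the paper but absent from its bibliography) and shows that the corollary really does follow from the tools developed in Section~\ref{s:whcp}. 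Two small remarks: first, where you write $T_pL$ in the singular$\,\subseteq\,$focal direction you mean $T_pN_p$, since $p\notin L=N$; second, the ``Weingarten hurdle'' you flag is not really a hurdle at all---by definition every $J_W\in W^\gamma$ is the variational field of a variation through horizontal geodesics, all of which start in $L$ perpendicularly, and this is exactly the variational characterization of $L$-Jacobi fields. The genuine place the codimension-one hypothesis enters is precisely where you locate it: forcing $\dim W^\gamma=n-1$ so that the evaluation at a regular point is a bijection.
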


\begin{proof} Let $N \in \F$ be arbitrary. Bolton shows in \cite{bolton} that the critical points of the normal exponential map $\exp_N^\perp$ are just the singular points if the foliation is of codimension one.
\end{proof}

\section{Tautness of the leaves}\label{s:taut}

\subsection{Taut immersions} \label{ss:immersion}
In order to draw topological conclusions for the leaves from their geometry we will use Morse theory. At first, let us recall some notation. 

A proper embedding $N \to M$ is called \emph{taut} (with respect to 
$\Z_2$), if the energy functional $E_p : P(M, N \times p) \to \R,\ c \mapsto \int_0^1 \|\dot c(t)\|^2 dt,$ is $\Z_2$-perfect for every $p$ in $M,$ that is not a focal point of $N,$ where 
$$P(M, N \times p) = \{c: [0,1] \to M \text{ in } H^1\ \mid\ (c(0), c(1)) \in (N, p)\}$$ 
denotes the Hilbert space of $H^1$-paths starting in $N$ and ending at $p,$ see \cite{grove} and \cite{terngthorb2}. It is well known that the energy functional is a Morse function whose critical points in $P(M, N \times p)$ are exactly the geodesics starting perpendicularly to $N$ and parametrized proportionally to arc length. Now, Morse's index theorem tells us that the index of such a geodesic $\gamma$ is equal to the sum of multiplicities of the focal points along $\gamma.$

If $\mathcal N_p^r =\{c \in P(M, N \times p) \mid E_p(c) \le r\}$ denotes the sublevel set of paths with energy less than or equal to $r \in \R,$ then the number $\mu_k^r$ of critical points of index $k$ of $E_p$ in $\mathcal N_p^r$ is finite. Let  $b_k^r$ be the $k$-th Betti number of $\mathcal N_p^r$ with respect to $\Z_2$ coefficients, so for all $r$ we can state the weak Morse inequalities $$\mu_k^r \ge b_k^r\ \text{ for all } k.$$  
A Morse function is called \emph{$\Z_2$-perfect,} if equality holds for all $r$ and all $k:$ $$\mu_k^r = b_k^r.$$ If $N$ is taut, the focal points of $N$ along a horizontal geodesic determine the $\Z_2$-homology of the space $P(M, N \times p)$ completely.

To show tautness of an immersion, it is a well working strategy to construct linking cycles explicitly, as done by Morse \cite{morse}, p.~244f, and in the famous paper of Bott and Samelson \cite{bottsamelson} (for a good exposition of this theory see \cite{palaisterng}, for the construction of such cycles in special cases see also \cite{sth}, \cite{gth02}, and \cite{thorbdupin}). As we will do so, too, let us shortly review this construction: The Morse inequalities reveal the deep connection between the $k$-th homology group of $\mathcal N_p^r$ and the geodesics of energy $r$ with index $k.$ Given such a geodesic, we need to construct an appropriate, non-trivial class, the linking cycle, in $H_k(N_p^r, \Z_2)$ out of the data of the geodesic, i.e. focal points and their multiplicities, to obtain equality.

As above we define
$$\mathcal N_p^{r-} =\{c \in P(M, N \times p) \mid E_p(c) < r\}.$$
It is a well known fact that $\mathcal N_p^r$ is homotopy equivalent to  $\mathcal N_p^{r-}$ if $r$ is not critical. And if $r$ is critical, there is an $\eps > 0,$ such that $$H_k(\mathcal N_p^{r}, \mathcal N_p^{r-\eps}) = \Z_2^i,$$ where $i$ denotes the number of critical points of $E_p$ with index $k$ and value $r.$ Then the long exact homology sequence tells us that $E_p$ is perfect iff
$$\hspace{5cm}H_k(\mathcal N_p^{r})\to H_k(\mathcal N_p^{r}, \mathcal N_p^{r-\eps})\hspace{3.2cm}(*)$$ is surjective for all $k.$ 

Let $\gamma$ be a critical point of $E_p$ with value $r,$ let $$1 > t_1 > \ldots > t_k > t_{k+1} = 0$$ such that $\{\gamma(t_i)\}_{i=1}^k$ are just the focal points of $N$ along $\gamma.$ Then 
$${\rm ind}(\gamma) = \sum_{i=1}^k \nu(\gamma(t_i)).$$
To show surjectivity in $(*)$ it suffices to construct for every $\gamma$ a compact, ${\rm ind}(\gamma)$-dimensional manifold $K,$ and a smooth map $$\lambda: K \to P(M; N \times p),$$ whereby the following conditions hold:
\begin{enumerate}
\item $\lambda(x_0) = \gamma$ for exactly one $x_0 \in K,$ and there is a neighborhood $U$ of $\gamma$ such that $\lambda|_{\lambda^{-1}(U)}$ is an immersion,
\item $\gamma$ is the only critical point of $E_p$ in the image of $\lambda$, and 
\item $E_p(\lambda(x)) \le E_p(\gamma)=r$ for all $x \in K.$
\end{enumerate}

By the negative gradient flow of $E_p,$ the image $\lambda(K)$ can then be deformed into a smooth cycle $Z_{\gamma}$ in $P(M, N \times p),$ which represents a non-trivial class in $H_{{\rm ind}(\gamma)}(\mathcal N_p^{r}).$ Conditions (i) and (ii) guarantee that the image of this class in $(*)$ is non-trivial, too. So, for every $\gamma$ we would have found a linking cycle $Z_\gamma,$ which shows that $H_k(\mathcal N_p^{r})\to H_k(\mathcal N_p^{r}, \mathcal N_p^{r-\eps})$ is surjective for every $k.$ Thus, tautness of $N$ follows.


\subsection{Topology of singular Riemannian foliations without horizontal conjugate points}\label{ss:topology}

Let $N$ denote a leaf and $P(M, N \times p)$ the corresponding space of paths ending in a point $p$ that is not a focal point of $N$. Critical points of the energy integral $E_p$ are horizontal geodesics from $N$ to $p$ with index equal to the sum of focal points counted with multiplicities. Hence, the easiest case for which we have to construct a linking cycle is a geodesic with only one focal point that is of multiplicity one. If all such critical points of index one are completable, we call $N$ \emph{$0$-taut.} As a first step towards the general construction we show

\begin{theorem}\label{theo0taut}
Let $(M, \mathcal F)$ be a singular Riemannian foliation without horizontal conjugate points on a complete Riemannian manifold $M$. Then, the leaves are $0$-taut.
\end{theorem}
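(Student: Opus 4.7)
Plan. The strategy is the Bott--Samelson linking-cycle construction from Subsection~\ref{ss:immersion}, adapted to singular Riemannian foliations via the structural results of the preceding sections. Let $\gamma:[0,1]\to M$ be a critical point of $E_p$ of index one; then $\gamma$ is a horizontal geodesic from $q\in N$ to $p$ with a unique focal point $f=\gamma(t_1)$ of multiplicity one. By Theorem~\ref{theowhcp}, $f$ is singular, and setting $L:=N_f$, Proposition~\ref{propsingfocal} gives $\dim L=\dim N-1$. Theorem~\ref{theotube} supplies a global tubular neighborhood ${\rm Tub}_{\eps_L}(L)$, inside which the whole construction is going to take place.

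The $S^1$ carrying the cycle is the ``focal fibre'' through $q$: the connected component $C$ of the set of horizontal geodesics of length $t_1$ from $N$ to $f$. Inside the slice $S_{\eps_L}(f)$, the metric projection $\pi:{\rm Tub}_{\eps_L}(L)\to L$ restricts to $N$ as a submersion with fibres of dimension $\dim N-\dim L=1$, and by the globally constant distance of Theorem~\ref{theotube} this fibre lives in the distance sphere $S_f^{t_1}$ around $L$. Properness of the embedding $N\hookrightarrow M$ then makes it a closed, hence compact, $1$-manifold, and I take $C\cong S^1$ to be the component containing $q$. If the length $t_1$ exceeds $\eps_L$, I would first produce such a circle at some distance $\delta<\eps_L$ and transport it out to distance $t_1$ via the positive homothetical transformations of Corollary~\ref{corcovering}, which are diffeomorphisms between leaves.

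Next I define $\lambda:C\to P(M,N\times p)$ by
\[
  \lambda(c)\;=\;\gamma_c\ast\gamma|_{[t_1,1]},
\]
where $\gamma_c:[0,t_1]\to M$ is the unique radial horizontal geodesic in the slice from $c$ to $f$ and the concatenation is reparameterised on $[0,1]$ at constant speed. Each $\gamma_c$ has length $t_1$ (radial from a point at distance $t_1$ with metric projection $f$), so $L(\lambda(c))=L(\gamma)$ and hence $E(\lambda(c))=E(\gamma)$ for every $c\in C$. At $c=q$, uniqueness of the radial geodesic gives $\lambda(q)=\gamma$; for every other $c$ the incoming direction $\dot\gamma_c(t_1)\in\nu^1_f L$ differs from the outgoing direction $\dot\gamma(t_1)$, so $\lambda(c)$ is genuinely broken at $f$ and therefore not a geodesic, in particular not a critical point of $E_p$. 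The differential $d\lambda_q$ is injective because a tangent vector to $C$ at $q$ extends to the unique non-trivial $N$-Jacobi field along $\gamma|_{[0,t_1]}$ vanishing at $t_1$ --- precisely the Jacobi field that realises the multiplicity-one focal point.

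These observations verify conditions~(i)--(iii) of Subsection~\ref{ss:immersion}, so $\lambda(C)$ can be pushed by the negative gradient flow of $E_p$ into a non-trivial linking cycle in $\mathcal N_p^r$ whose image generates the summand of $H_1(\mathcal N_p^r,\mathcal N_p^{r-\eps})$ contributed by $\gamma$, and $N$ is $0$-taut. The main technical obstacle is justifying that the focal fibre $C$ really is a compact, connected $1$-manifold when $t_1\geq\eps_L$: here one has to combine the homothetical transformations from Proposition~\ref{prophomotheties} and Corollary~\ref{corcovering} with the globally constant distance of Theorem~\ref{theotube} to transport small-distance data outwards, and the delicate point is ruling out that the holonomy of $N$ fragments the family into a multi-sheeted cover rather than closing it up into a single $S^1$.
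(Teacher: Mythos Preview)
Your overall strategy coincides with the paper's: build a Bott--Samelson linking cycle for an index-one geodesic $\gamma$ by using the singular focal point $f=\gamma(t_0)$ (via Theorem~\ref{theowhcp}) and the one-dimensional family of horizontal geodesics from $N$ focusing at $f$. The difference is in where you anchor the parametrising circle, and the gap you yourself flag at the end is genuine.

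You try to realise $C$ as a circle \emph{inside $N$}, namely a component of $\pi^{-1}(f)\cap N$ in ${\rm Tub}_{\eps_L}(L)$, and when $t_1\ge\eps_L$ you propose to transport a small-distance circle out to $N$ by homotheties. Two things go wrong. First, a single homothety $h_\lambda$ is only defined within one global tube, so you cannot reach $N$ in one step; you need a chain of overlapping global tubes along $\gamma|_{[0,t_0-\delta]}$, exactly as in the proof of Theorem~\ref{theotube}. Second, and more seriously, even after such a chain the last map onto $N$ is only the covering $\pi_N|_L$ of Lemma~\ref{lemcovering}, so the image in $N$ may be a multiply covered circle rather than an embedded $S^1$. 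Your ``delicate point'' cannot, in general, be ruled out.

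The paper sidesteps this entirely: it does \emph{not} take the parametrising manifold inside $N$. Instead it picks $\delta<\eps$ with $\gamma(t_0-\delta)$ specificly regular (or of minimal type along $\gamma$, using Lemmas~\ref{lemacc} and~\ref{lemacctype}), and sets $K$ to be the component of $N_{\gamma(t_0-\delta)}\cap S_\eps(\gamma(t_0))$ through $\gamma(t_0-\delta)$. Compactness of $K$ is then automatic. The variation $f:K\times[0,t_0]\to M$ is obtained by extending the minimal radial geodesics from $K$ to $\gamma(t_0)$ out to length $L(\gamma|_{[0,t_0]})$. Proposition~\ref{proptangentialcurve} forces $f(s,0)\in N$ for all $s$, and the chain-of-tubes argument with Corollary~\ref{corcovering} shows that $\{f(s,0)\mid s\in K\}$ is the image of $K$ under a composition of diffeomorphisms followed by one covering; it may fail to be embedded in $N$, but this is harmless because $\lambda$ is defined on $K$, not on that image, and distinct points of $K$ give distinct geodesics, so $\lambda:K\to P(M,N\times p)$ is injective. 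Conditions (i)--(iii) then follow as you argued.

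Finally, you do not treat singular $N$. The paper handles this separately: restrict to regular $p$, build the cycle for a nearby regular leaf $L$, and prepend to each broken geodesic the minimal segment from its start in $L$ down to $N$; this shifts energy by a constant and preserves indices.
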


\begin{remark}
The following construction of a linking cycle obviously also works in the situation of a horizontal geodesic with only one focal point whose multiplicity is arbitrary.
\end{remark}

\begin{proof}
At first, let $N$ be a regular leaf and let $\gamma:[0,1] \to M$ be a critical point of $E_p$ as above, i.e. a horizontal geodesic with exactly one focal point $\gamma(t_0)$ of multiplicity one. By Theorem \ref{theowhcp}, $\gamma(t_0)$ is singular. Hence, by Lemma \ref{lemacc} and Lemma \ref{lemacctype}, we can conclude that except isolated points, all points along $\gamma|_{(0,t_0]}$ are of the same type. Furthermore, the isolated points are of higher type.

Choose a global tubular neighborhood ${\rm Tub}_{\eps}(N_{t_0})$ and $\delta < \eps$ such, that $\gamma(t_0 - \delta)$ is specificly regular resp.~of minimal type along $\gamma.$ Then, $N_{\gamma(t_0 - \delta)} \cap S_\eps(\gamma(t_0))$ is a compact, properly embedded submanifold of dimension $\nu(\gamma(t_0)) = 1$ of $N_{\gamma(t_0 - \delta)}.$ If it is not connected, we choose the component -- topologically a circle -- that containts $\gamma(t_0 - \delta)$ and call it $K.$ Now, define 
$$f: K \times [0,t_0] \to M, \text{ with } \left\{\begin{array}{l} f(s,t_0) =  \gamma(t_0) \text{ for all } s \in K\\ \text{and } K = \{f(s, t_0-\delta) \mid s \in K\},\end{array}\right.$$ to be the one-dimensional, smooth variation of horizontal geodesics of the same length as $\gamma|_{[0, t_0]}.$ This variation is unique because the paths $f_{s}|_{[t_0-\delta, t_0]}$ are minimal from $K$ to $\gamma(t_0)$ in $M$ for all $s.$ 

The path through the variation at time $t_0 - \delta$ is tangential to the leaf $N_{\gamma(t_0 - \delta)}.$ Obviously, $\frac{\partial}{\partial s}f(s,t_0)$ is tangential to $N_{t_0}$ for all $s.$ Thus, $$\frac{\partial}{\partial s}f(s,t) \in T_{f_s(t)}N_{\gamma(t)} \text{ for all $s$ and all $t$};$$ and by proposition \ref{proptangentialcurve} we have $f(s, 0) \in N$ for all $s.$ Moreover, the set $$\{f(s,0) \mid s \in K\} \subset N$$ is a compact submanifold (possibly not properly embedded) of $N$ as image of $K:$ 

Cover $\gamma|_{[0,t_0-\delta]}$ by global tubular neighborhoods similar to the covering in the proof of Theorem \ref{theotube}. For this purpose, let the centers be the leaf $N$ as well as all the leaves of higher type than $N_{\gamma(t_0-\delta)}$ and, if these tubular neighborhoods do not yet cover $\gamma|_{[0,t_0-\delta]}$, some specificly regular leaves  (resp.~leaves of the same type as $N_{\gamma(t_0 - \delta)}$). Thus, each point $f(s,t)|_{[0,t_0-\delta]}$ lies within exactly one neighborhood or within the section of two consecutive neighborhoods. Denote the centering leaves by $N = N_0, N_{1}, \ldots, N_{r} \neq N_{\gamma(t_0 - \delta)}$ with $N_{\gamma(t_0 - \delta)} \subset {\rm Tub}(N_r).$ The homotheties defined within the global neighborhoods are diffeomorphisms by Corollary \ref{corcovering}, and with the help of these maps, we move $K$ along the geodesics $f_s(t)$ from $N_{\gamma(t_0 - \delta)}$ to a leaf (of the same type) in the intersection ${\rm Tub}(N_{r}) \cap {\rm Tub}(N_{r-1}),$ then to a leaf in ${\rm Tub}(N_{r-1}) \cap {\rm Tub}(N_{r-2}),$ etc.~up to a leaf $L$ in the intersection ${\rm Tub}(N_{1}) \cap {\rm Tub}(N_0).$ The image of $K$ in $L,$ i.e.~the set $\{f(s,\tilde t) \mid s \in K, \gamma(\tilde t) \in L\},$ is mapped to $\{f(s,0) \mid s \in K\}$ via the orthogonal projection $\pi_N|_{L}: L \to N$ in the last step, hence we loose embeddedness in general.

Now define an injective, smooth map $\lambda: K \to P(M, N \times p)$ by
$$\lambda (s) (t) = \left\{\begin{array}{ll}
f_{s} (t) , & t \in [0 , t_0],\\
 \gamma(t),  & t \in [t_0, 1].
\end{array}\right.$$
The compact image $\lambda(K)$ is an injective, smooth cycle of dimension $\nu(\gamma(t_0))$ in $\mathcal N_p^{E(\gamma)}$ consisting of broken geodesics of same length and thus of same energy. The only non-broken geodesic is $\gamma.$ Hence, the critical point $\gamma$ is completable.

For singular leaves, we can use an idea based on Proposition 2.7 in \cite{terngthorb2}, which in the general case of foliations is due to S.~Wiesendorf, see \cite{wiesend}: By definition, a leaf $N$ is taut, if the energy functional $E_p$ is perfect for $p$ not focal, but by a limiting argument, it suffices to show perfectness for all $p$ that are regular. Thus, along a horizontal geodesic, nearly all points are regular. To show that a singular leaf $N$ is taut, we construct a linking cycle $\lambda(K)$ of a regular leaf $L$ nearby and then add to every broken geodesic in $\lambda(K)$ a minimal geodesic connecting the endpoint of the geodesic in $\lambda(K)$ with $N.$ The corresponding map $$P(M; L \times p) \to P(M; N \times p); c \mapsto \tilde c = \gamma_{c(0)}c$$ is injective and preserves the indices of geodesics as critical points of $E_p.$ Furthermore, the energy levels will just be shifted by a constant. The new set of broken geodesics,  $\widetilde{\lambda(K)},$ by construction can be deformed into a linking cycle of $N.$
\end{proof}

T\"oben has shown in \cite{toeben} that a singular Riemannian foliation with properly embedded leaves and with flat sections in a simply connected symmetric space does never have exceptional leaves. As an application of the previous theorem, we generalize this to

\begin{proposition}\label{propsimplycon}
If $M$ is a complete, simply connected manifold and if $(M, \F)$ is singular Riemannian without horizontal conjugate points and with properly embedded leaves, there are no exceptional leaves in $\mathcal F.$
\end{proposition}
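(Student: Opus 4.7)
The plan is to argue by contradiction. Suppose some leaf $N$ is exceptional of type $a_N \geq 2$. I will exhibit, near $N$, a specifically regular leaf $L$ and an endpoint $p$ for which the energy functional on $P(M, L \times p)$ has at least $a_N$ distinct index-$0$ critical points, contradicting $0$-tautness of $L$ (Theorem~\ref{theo0taut}) combined with the simple connectedness of $M$.

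Fix $x \in N$. Since $\Sigma_{\mathrm{reg}}$ is open and contains $x$, there exists $\rho > 0$ with $\overline{B_\rho(x)} \subset \Sigma_{\mathrm{reg}}$. By Proposition~\ref{propregdense} specifically regular leaves accumulate at $N$, and the global constant distance of leaves lets me pick such an $L$ with $\delta := d(L,N) < \rho$ and $L \subset \mathrm{Tub}_{\varepsilon_N}(N)$. By Lemma~\ref{lemcovering}, $\pi|_L \colon L \to N$ is an $a_N$-fold covering, so $L \cap S_{\varepsilon_N}(x) = \{q_1, \dots, q_{a_N}\}$ consists of $a_N \geq 2$ distinct points, each at distance $\delta$ from $x$. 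For each $i$ let $\gamma_i \colon [0,\delta] \to M$ be the radial horizontal geodesic from $q_i$ to $x$. Since $\gamma_i \subset \overline{B_\delta(x)} \subset \Sigma_{\mathrm{reg}}$, the geodesic passes through no singular points, hence by Theorem~\ref{theowhcp} through no focal points of $L$. Each $\gamma_i$ is therefore an index-$0$ critical point of $E_x \colon P(M, L \times x) \to \R$, and consequently $\mu_0(E_x) \geq a_N \geq 2$.

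It remains to combine this with $0$-tautness of $L$, supplied by Theorem~\ref{theo0taut}. A linking cycle at every index-$1$ critical value $r$ makes $H_1(\mathcal N_x^r) \to H_1(\mathcal N_x^r, \mathcal N_x^{r-\varepsilon})$ surjective, so the long exact sequence of the pair forces $H_0(\mathcal N_x^{r-\varepsilon}) \to H_0(\mathcal N_x^r)$ to be injective. Since higher-index handles leave $H_0$ untouched and $b_0$ increases by exactly one at every index-$0$ critical point, taking the direct limit in $r$ yields $b_0(P(M, L \times x)) = \mu_0(E_x)$. On the other hand, the path-loop fibration $\Omega M \to P(M, L \times x) \to L$ gives the exact sequence
$$\pi_1(L) \to \pi_1(M) \to \pi_0(P(M, L \times x)) \to \pi_0(L) = 0,$$
and the simple connectedness of $M$ forces $b_0(P(M, L \times x)) = 1$. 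Hence $\mu_0(E_x) = 1$, contradicting $\mu_0(E_x) \geq 2$.

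The main obstacle is to verify cleanly that all $a_N$ radial geodesics are genuine index-$0$ critical points, i.e.\ that they avoid the singular stratum; this is handled above by choosing $L$ inside the neighborhood $B_\rho(x) \subset \Sigma_{\mathrm{reg}}$, relying on openness of $\Sigma_{\mathrm{reg}}$, the global constant distance of leaves, and Theorem~\ref{theowhcp}. A subsidiary point is the passage from $0$-tautness (a statement only about index-$1$ critical points) to the Morse equality $\mu_0 = b_0$, which is a routine consequence of the long exact homology sequence once one notes that handles of index $\geq 2$ do not affect $H_0$.
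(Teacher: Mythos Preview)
Your proof is correct and follows essentially the same route as the paper's: both exploit the fibration $\Omega M \to P(M, L \times x) \to L$ together with $\pi_1(M)=0$ to see that the path space is connected, then use the exceptional leaf $N$ to produce at least two index-$0$ critical points of $E_x$ for a nearby specifically regular $L$, and finally invoke the $0$-tautness Theorem~\ref{theo0taut} to reach a contradiction.

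The only substantive difference is cosmetic. The paper phrases the last step via the minimax principle: from two local minima one extracts an index-$1$ geodesic $c$ bridging two components of $\mathcal B_x^{E(c)^-}$, and then observes that the linking circle $Z$ supplied by $0$-tautness has $Z\setminus\{c\}$ connected yet forced to meet both components. You instead draw the standard Morse-theoretic consequence of $0$-tautness directly, namely $\mu_0=b_0$, and contrast it with $b_0=1$. Your formulation is a bit more streamlined; the paper's is more hands-on. You also make explicit a point the paper leaves implicit, namely that the radial geodesics $\gamma_i$ avoid $\Sigma_{\rm sing}$ and hence carry no focal points of $L$ (the paper simply calls them ``local minima''). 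One trivial adjustment: when you pick $\rho$ you should also take $\rho<\varepsilon_N$ so that $\delta<\rho$ automatically forces $L\subset{\rm Tub}_{\varepsilon_N}(N)$.
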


\begin{proof}
Let $N$ be a leaf and $p \in M.$ There is the fibration $$P(M, N \times p) \to N; c \mapsto c(0),$$ where $c: [0,1] \to M$ is a path with $c(0) \in N, c(1) = p.$ The fiber $\Omega_{x_0 p}(M)$ over $x_0 \in N,$ that is the space of paths from $x_0$ to $p,$ is homotopy equivalent to $\Omega_{x_0}(M)$ which is connected as $M$ is simply connected. The long exact sequence of homotopy groups of fibrations yields that $P(M, N \times p)$ is connected:
\begin{center}
\hspace{3mm}\xymatrix@R=6pt{
\ldots \ar[r] & \pi_1(N) \ar[r] & \pi_0(\Omega_{x_0}(M))\ar@{=}[d] \ar[r] & \pi_0(P(M, N \times p)) \ar[r] & \pi_0(N)\ar@{=}[d] \ar[r] & 0\;.\\
&&0&&0}
\end{center}

Assume that there is an exceptional leaf $A \subset M.$ Then let $B$ be a leaf with trivial holonomy meeting a tubular neighborhood of $x \in A$ several times. Under the boundary condition  $B \times x,$ the energy integral thus must have at least two local minima. Now use the minimax principle:

There is a critical point of index one, i.e. a geodesic $c$ from $B$ to $x$ with just one focal point, and this critical point connects two components of $\mathcal B_x^{E(c)^-}$ in $\mathcal B_x^{E(c)}.$ By Theorem \ref{theo0taut}, there is a one-dimensional linking cycle $Z,$ topologically a circle, in $P(M, B \times x)$ with $$E(z) \le E(c)\ \text{ for all } z \in Z$$ and equality holds if and only if $z = c.$ This leads to a contradiction because $Z\setminus \{c\}$ is connected but, as $Z$ is a representative in $H_{{\rm ind}(\gamma)}(\mathcal B_x^{E(c)})$ that maps non-trivially to $H_{{\rm ind}(\gamma)}(\mathcal B_x^{E(c)}, \mathcal B_x^{E(c)^-})$, $Z\setminus \{c\}$ has to meet both components of $\mathcal B_x^{E(c)^-}$ that are only connected by $c.$
\end{proof}


\begin{theorem}\label{theotaut}
The leaves of a singular Riemannian foliation $(M, \mathcal F)$ with embedded leaves and without horizontal conjugate points are taut.
\end{theorem}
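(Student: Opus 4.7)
The plan is to extend the linking-cycle construction of Theorem \ref{theo0taut} from horizontal geodesics with a single focal point to arbitrary critical points of the energy functional, thus realising the explicit Bott--Samelson type strategy laid out at the end of Subsection \ref{ss:immersion}. By the reduction at the end of the proof of Theorem \ref{theo0taut} (prepend a short minimal geodesic from a nearby regular leaf to $N$), we may assume $N$ is regular.

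Let $\gamma\colon[0,1]\to M$ be a critical point of $E_p$, i.e.~a horizontal geodesic from $q:=\gamma(0)\in N$ to $p:=\gamma(1)$. By Theorem \ref{theowhcp} the focal points of $N$ along $\gamma$ are exactly the singular points $p_i:=\gamma(t_i)$, $0<t_1<\ldots<t_k<1$, and by Proposition \ref{propsingfocal} their multiplicities are $\nu_i=\dim N-\dim N_{p_i}$, so ${\rm ind}(\gamma)=\sum_i\nu_i$. Set $t_0:=0$ and $p_0:=q$. For each $i$ combine Lemma \ref{lemacctype} and Proposition \ref{propiso} to choose $\delta_i>0$ so that $\gamma(t_i-\delta_i)$ lies in a specifically regular leaf $L_i$ of minimal type along $\gamma|_{(t_{i-1},t_i)}$, and Theorem \ref{theotube} to choose $\eps_i>0$ with ${\rm Tub}_{\eps_i}(N_{p_i})$ a global tubular neighborhood. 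Put
$$K_i := \text{connected component of } L_i\cap S_{\eps_i}(p_i) \text{ containing }\gamma(t_i-\delta_i),$$
a compact submanifold of dimension $\nu_i$. Exactly as in Theorem \ref{theo0taut}, for each $s_i\in K_i$ the unique minimal horizontal geodesic from $s_i$ to $p_i$ extends backwards (via the chain of coverings by global tubes and the homothetical transformations of Corollary \ref{corcovering}) to a horizontal geodesic $f_i(s_i,\cdot)\colon[t_{i-1},t_i]\to M$ of length $L(\gamma|_{[t_{i-1},t_i]})$; Proposition \ref{proptangentialcurve} applied to the everywhere-tangential variation field $\partial f_i/\partial s_i$ at $t=t_{i-1}$ shows $f_i(s_i,t_{i-1})\in N_{p_{i-1}}$.

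Define $K:=K_1\times\cdots\times K_k$, compact of dimension ${\rm ind}(\gamma)$. For $(s_1,\ldots,s_k)\in K$, build $\lambda(s_1,\ldots,s_k)\in P(M,N\times p)$ inductively from $p$ backwards: keep $\gamma|_{[t_k,1]}$; on $[t_{k-1},t_k]$ take $f_k(s_k,\cdot)$, which ends at $p_k$ but starts at some $x_{k-1}:=f_k(s_k,t_{k-1})\in N_{p_{k-1}}$; then on $[t_{k-2},t_{k-1}]$ take the image of $f_{k-1}(s_{k-1},\cdot)$ under sliding along $N_{p_{k-1}}$ from $p_{k-1}$ to $x_{k-1}$ (well-defined and smooth by Subsection \ref{ss:sliding} within ${\rm Tub}_{\eps_{k-1}}(N_{p_{k-1}})$, whose leaf structure is preserved by Corollary \ref{corcovering}), which is a horizontal geodesic of the same length ending at $x_{k-1}$ and starting at some $\tilde x_{k-2}\in N_{p_{k-2}}$; iterate down to $i=1$, where the final sliding lands in $N$. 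The resulting $\lambda(s_1,\ldots,s_k)$ is a continuous broken horizontal curve from $N$ to $p$ of total length $L(\gamma)$, hence of energy $E(\gamma)$.

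Conditions (i)--(iii) of Subsection \ref{ss:immersion} are then verified as follows. (iii) is immediate from the length equality. For (ii), a broken element of $\lambda(K)$ is a critical point of $E_p$ only if no bending occurs, i.e.~$s_i=\gamma(t_i-\delta_i)$ for every $i$, which recovers $\gamma$. For (i), smoothness of $\lambda$ follows from the smoothness of exponential maps, of sliding along leaves and of the homothetical transformations (Proposition \ref{prophomotheties}); injectivity of $d\lambda$ at the base point $(\gamma(t_1-\delta_1),\ldots,\gamma(t_k-\delta_k))$ follows because each factor $K_i$ meets $\gamma$ transversally at $\gamma(t_i-\delta_i)$ and the ensuing first-order bendings at different $p_i$ propagate independently through the chain of slidings. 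The main obstacle is the global bookkeeping: one must check carefully that the iterated sliding is well-defined, smooth and injective across the whole product $K$ and that the broken geodesics stay within the respective global tubes, which is exactly where Theorem \ref{theotube}, Proposition \ref{prophomotheties}, Corollary \ref{corcovering} and Proposition \ref{propiso} come to bear. With the linking cycle thus completable for every critical $\gamma$, $\Z_2$-perfectness of $E_p$ and therefore tautness of $N$ follow as in Subsection \ref{ss:immersion}.
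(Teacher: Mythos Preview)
Your strategy differs from the paper's in one essential respect: you take $K = K_1 \times \cdots \times K_k$ as a \emph{product} and use sliding along the singular leaves $N_{p_i}$ to transport the fixed variations $f_i$; the paper instead builds $K$ as an \emph{iterated fibre bundle} $\{(s_1,\ldots,s_k)\mid s_i\in K_i(s_{i-1})\}$ and proves a separate Lemma~\ref{lemfixedfocal} showing that the focal points (and their multiplicities) are fixed along each variation, so that the focal manifold $K_i(s_{i-1})$ can be constructed anew at every $f_{s_{i-1}}(t_i)$ with no sliding needed.

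The sliding step contains a genuine gap. Subsection~\ref{ss:sliding}, which you cite for well-definedness, says explicitly that sliding from $p_{k-1}$ to $x_{k-1}$ depends on the homotopy class of the connecting path in $N_{p_{k-1}}$; you never specify such a path, let alone a smooth family of them parametrised by $s_k\in K_k$ (and then, iteratively, by $(s_{k-1},s_k)$, $(s_{k-2},s_{k-1},s_k)$, etc.). Equivalently, you would need a lift of the map $s_k\mapsto x_{k-1}(s_k)$ through the covering $L_{k-1}\to N_{p_{k-1}}$ of Lemma~\ref{lemcovering}, and there is no reason this lift exists globally when the covering is non-trivial. This is precisely why the paper replaces the product by a bundle: the fibres $K_i(s_{i-1})$ are canonically attached at each stage (they are the focal manifolds at $f_{s_{i-1}}(t_i)$, well-defined by Lemma~\ref{lemfixedfocal}), whereas your fixed $K_i$ at $p_i$ has no canonical identification with them. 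A secondary problem: sliding in Subsection~\ref{ss:sliding} moves points of a transversal inside a single tube, but the geodesic $f_{k-1}(s_{k-1},\cdot)$ on the full interval $[t_{k-2},t_{k-1}]$ is generally not contained in ${\rm Tub}_{\eps_{k-1}}(N_{p_{k-1}})$; you would have to slide only its terminal direction, re-shoot, and re-extend through a chain of tubes --- and to know that the extension lands in $N_{p_{k-2}}$ you again need the focal structure supplied by Lemma~\ref{lemfixedfocal}. Your closing paragraph concedes that the ``global bookkeeping'' is the main obstacle; that bookkeeping is exactly the content of Lemma~\ref{lemfixedfocal} and the bundle construction, which the paper carries out and you do not.
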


\begin{proof}
At first, let $N$ be regular. For each focal point $x$ of $N$ along a horizontal geodesic $\gamma_x,$ we find a corresponding manifold $K_x$ of dimension $\nu(x)$ within a neighboring leaf $L$ that is specificly regular resp.~of minimal type along $\gamma_x$ as in the proof of theorem \ref{theo0taut} and a global variation $f_x(s, t), s \in K_x,$ of geodesics such that $f_x(s, 0) \in N, f_x(s,1) = x$ for all $s \in K_x$ because of Proposition \ref{propsingfocal}. Proposition \ref{propiso} tells us that leaves of different dimension or different type than $L$ are isolated along $\gamma_x$ and therefore, we can carry over the whole proof of the previous theorem: Compactness of the image of $K_x$ in $N$ follows analogously as long as we choose all these singular and exceptional leaves of higher type along $\gamma_x$ to be centers of tubular neighborhoods of the covering. Hence, the only work to do regarding our Theorem \ref{theotaut} is an iteration of this idea. As in \cite{bottsamelson}, we can use a fiber bundle construction.

The notation is the same as above. Denote the focal points on $\gamma$ by $$\{\gamma(t_j)\}_{j=1}^k, 1 = t_0 > t_1 > \ldots > t_k > t_{k+1} = 0, \text{ with } {\rm ind}(\gamma) = \sum_{j=1}^k \nu(\gamma(t_j)).$$

For all $j = 1, \ldots, k$ let $f_{j}(s, \cdot): [0, t_j] \to M$ denote the $\nu(\gamma(t_j))$-dimensional variation of geodesics from $N$ to $\gamma(t_j)$ parametrized by $K_j$ (similar to the proof of Theorem \ref{theo0taut}). This variation consists of all minimal geodesics between $K_j$ and $\gamma(t_j)$ continued up to the length $L(\gamma|_{[0, t_j]}).$ In this situation holds

\begin{lemma}\label{lemfixedfocal}
If $\gamma(t_i)$ is a focal point of $N$ along $\gamma,$ then $f_{j}(s,t_i)$ is a focal point of $N$ of same multiplicity along $f_{j}(s, \cdot)$ for $j = 1, \ldots, i-1$ and for all $s \in K_j.$ Moreover, we have $$N_{f_{j}(s,t)} = N_{\gamma(t)} \text{ for all } j = 1, \ldots, k+1 \text{ and }s \in K_j, t \in [0, t_j].$$
\end{lemma}

\noindent\begin{minipage}{0.5\textwidth}
\begin{center}
\begin{overpic}[width=6cm
]{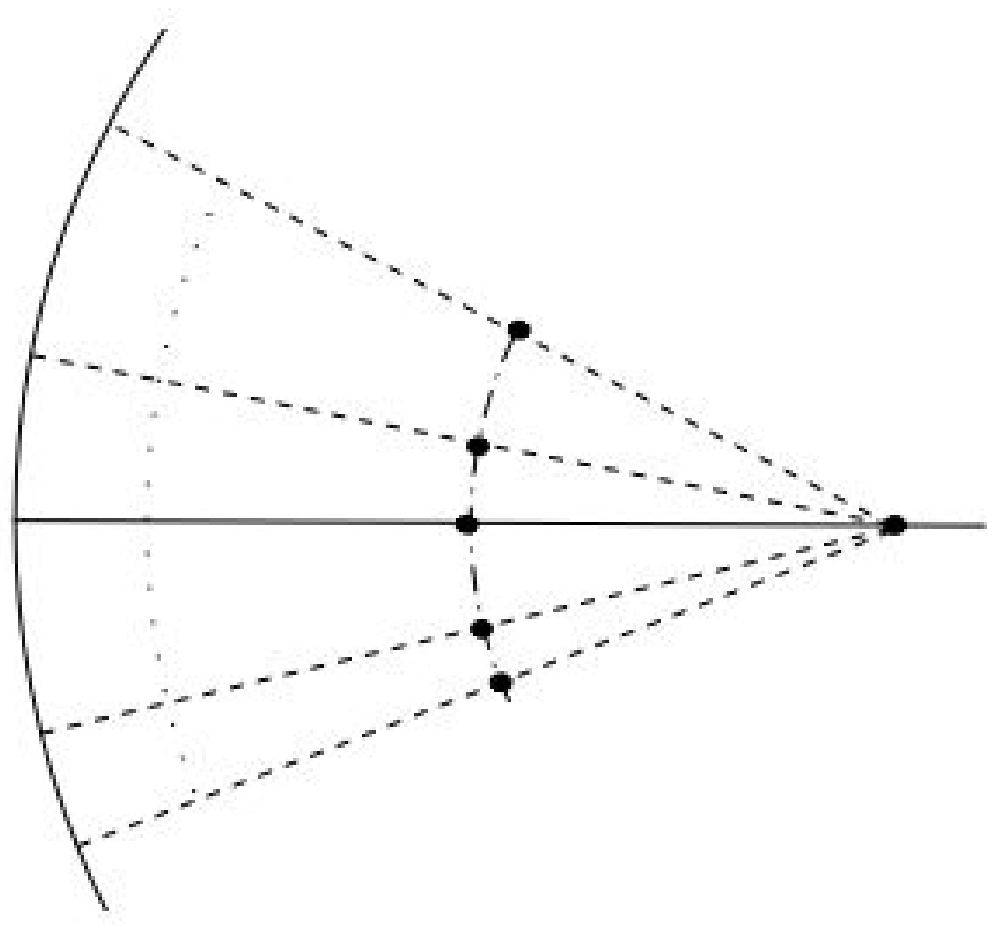}
\put(97,40){$\gamma$}\put(86,37){$\gamma(t_j)$}\put(17,60){Variation}\put(14,55){$f_j: K_j \times [0, t_j]$}\put(17,85){$N$}\put(48,37){$\gamma(t_i)$}\put(4,37){$\gamma(0)$}\put(52,62){$f_j(s, t_i) \in N_{\gamma(t_i)}$}
\end{overpic} 
\end{center}
\end{minipage}
\begin{minipage}{0.5\textwidth}
\begin{proof}
This results directly from Lemma \ref{lemjacobifield} and Proposition \ref{propsingfocal} as well as from the variation formula of the energy.
\end{proof}
\begin{remark} As remarked in \cite{toeben}, focal points along geodesics within such a variation are not fixed in general (i.e. in an arbitrary singular Riemannian foliation), neither are their multiplicities. Although easily proven with the help of Section \ref{s:whcp} here, the lemma is essential to guarantee that the iterated bundle will be a manifold of dimension ${\rm ind}(\gamma)$ as we will see now. 
\end{remark}
\vspace{0.4cm}
\end{minipage}

\noindent Knowing that along all geodesics of the variations $f_{j},$ there are the same focal points at the same time as along $\gamma$ we can define
$$K := \{(s_1, \ldots, s_k) \mid s_1 \in K_1(\gamma(1)), s_2 \in K_2(s_1), \ldots, s_k \in K_k(s_{k-1})\},$$
where $K_i(s_{i-1})$ denotes the manifold of dimension $\nu(\gamma(t_i))$ which focalizes to the point $t_i$ along the geodesic $f_{s_{i-1}}(t), t \in [0, t_{i-1}],$ for all $i.$ And by the Lemma \ref{lemfixedfocal}, $K$ is a compact manifold, a fiber bundle, of dimension ${\rm ind}(\gamma).$

Now define the map $\lambda: K \to P(M, N \times p)$ as follows:
$$\lambda (s_1, \ldots, s_k) (t) = \left\{\begin{array}{ll}
f_{s_i}(t), & t \in [t_i, t_{i+1}], i = 1, \ldots, k\\
\gamma|_{[t_1,1]}(t), & t \in [t_1, 1].
\end{array}\right.$$
The cycle $\lambda(K)$ thus consists of broken geodesics, all of same length; the only one that is non-broken and therefore the critical point of $E_p$ is $\gamma$ itself.

\begin{minipage}[t]{0.9\textwidth}
\emph{Sketch of the construction of $\lambda(K)$ in $M$.}
\begin{center}
\begin{overpic}[width=6cm
]{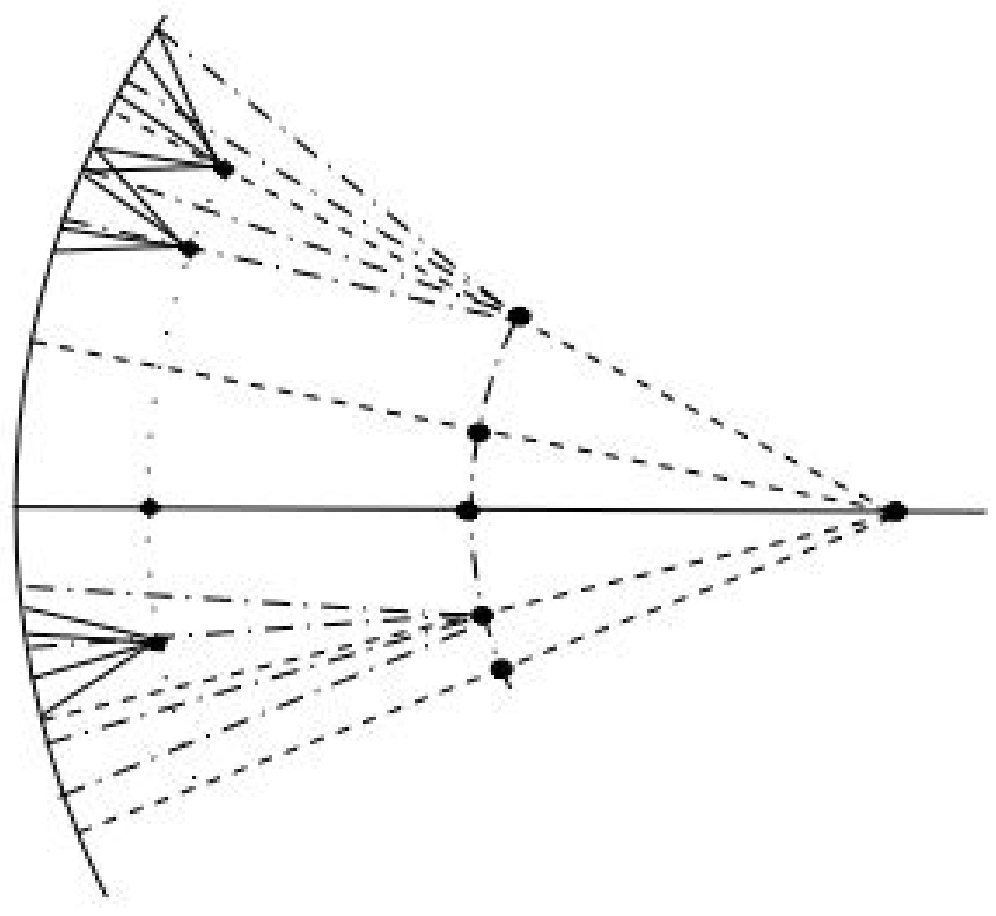}
\put(97,39){$\gamma$}\put(18,88){$N$}\put(12,61){etc.}
\end{overpic} 
\end{center}
\vspace{5mm}
\end{minipage}

Furthermore, $\lambda(K)$ has dimension ${\rm ind}(\gamma)$ and is injective in the path space because each step of the iteration, i.e. of the bundle construction, yields only different broken geodesics. Hence, all requirements for a linking cycle for $\gamma$ are fullfilled. By arbitraryness of $\gamma,$ the claim follows for regular leaves $N.$

For a singular leaf $N$ on $\gamma$, the argumentation is the same as above in the proof of Theorem \ref{theo0taut}: Assume $p$ to be regular and construct the linking cycle of a regular leaf $L$ near $N$. Then, add the geodesics connecting the endpoints of the cycle in $L$ with $N$. The new manifold of broken geodesics fulfills all conditions for a linking cycle for $\gamma.$ 

\end{proof}

We will at last give an example in order to illustrate the cycle construction. 

\begin{minipage}[h]{0.5\textwidth}
\begin{example} Assume that the leaf $N$ in $M = \R^3$ is a $2$-torus, and the critical point in $P(M, N \times p)$ is a horizontal geodesic $\gamma$ as in the figure on the right. The index of $\gamma$ is two, because there are the two focal points $p_1$ and $p_2$ each of them with multiplicity one.
\end{example}
\end{minipage}
\begin{minipage}{0.5\textwidth}
\begin{center}
\begin{overpic}[width=4.3cm
]{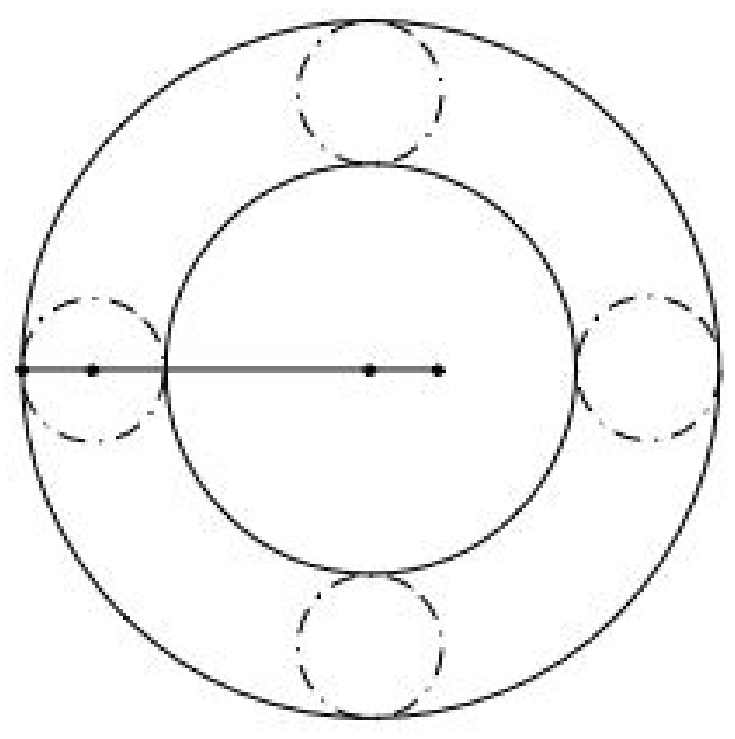}
\put(10,80){$N$}\put(35,45){$\gamma$}\put(49,46){$p_1$}\put(14,46){$p_2$}\put(59,46){$p$}
\end{overpic} 
\end{center}
\end{minipage}

\begin{minipage}[h]{0.5\textwidth}
\begin{center}
\begin{overpic}[width=4.3cm
]{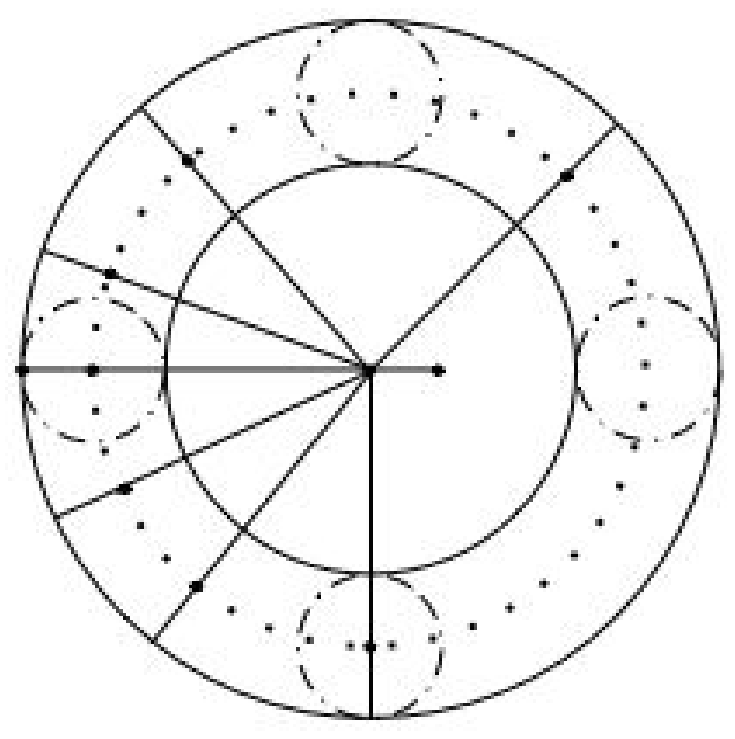}
\put(10,80){$N$}\put(35,46){$\gamma$}\put(51,46){$p_1$}\put(14,46){$p_2$}\put(59,46){$p$}\put(12,70){$K_1$}\put(43,60){\ldots $f_s$\ldots}\put(10,37){$K_2(s_0)$}\put(0,48){$s_0$}
\end{overpic} 
\end{center}
\end{minipage}
\begin{minipage}{0.5\textwidth}
Let $K_2(s), s \in K,$ be the appropriate, one-dimensional focal manifold, which can be identified as meridian through $s$ of the torus.

Thus, the linking cycle of $\gamma$ in $P(M, N \times p)$ is topologically the torus and consists of broken geodesics as outlined in this third figure.
\end{minipage}

\begin{minipage}[h]{0.5\textwidth}
The focal manifold $K_1$ can be identified with the outer latitude of the torus. There is an one-dimensional variation $f_s(t), s \in K_1,$ through geodesics starting perpendicularly in $s \in K_1 \subset N$ and ending in the focal point $p_1.$ On each of the geodesics lies exactly one focal point like $p_2$ on $\gamma.$
\end{minipage}
\begin{minipage}{0.5\textwidth}
\begin{center}
\begin{overpic}[width=4.3cm
]{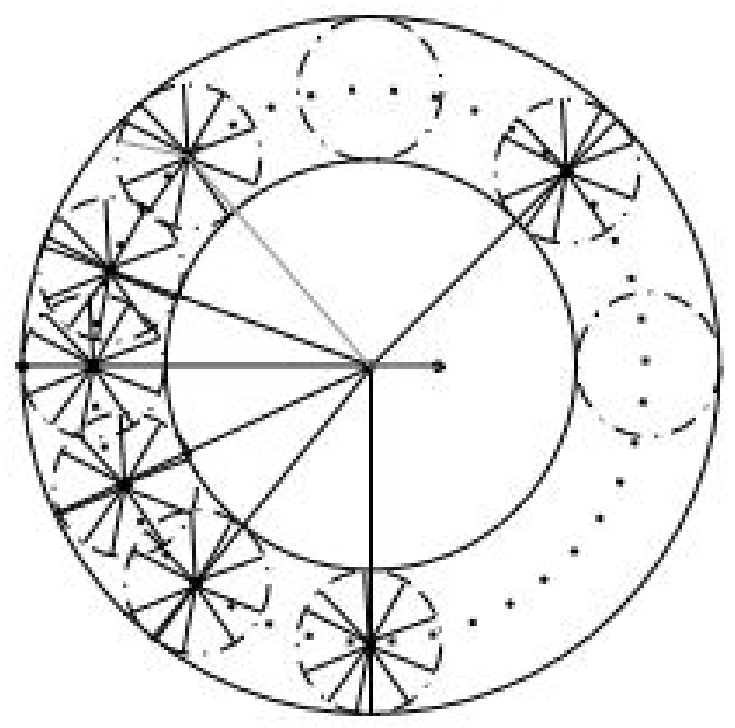}
\put(10,80){$N$}\put(43,-5){figure 3}\put(35,46){$\gamma$}\put(59,46){$p$}
\end{overpic} 
\end{center}
\end{minipage}

\end{document}